\documentclass[12pt,a4paper]{amsart}

\usepackage{mathtools}
\usepackage{xcolor}
\usepackage{amsfonts}
\usepackage{amssymb}
\usepackage{amsthm}
\usepackage{tikz}
\usetikzlibrary{arrows,positioning,automata,shadows,fit,shapes,shapes.multipart,
decorations.pathreplacing}
\usepackage{ytableau}
\usepackage[makeroom]{cancel}

\usepackage{hyperref}

\tikzset{
	state/.style={
		rectangle split,
		rectangle split parts=2,
		rectangle split part fill={red!30,blue!20},
		rounded corners,
		draw=black, very thick,
		minimum height=2em,
		text width=3cm,
		inner sep=2pt,
		text centered,
	}
}

\delimitershortfall-1sp
\newcommand\abs[1]{\left|#1\right|}
\newcommand*{\aA}{\mathcal{A}}
\newcommand*{\bB}{\mathfrak{R}}
\newcommand*{\cC}{\mathfrak{C}}
\newcommand*{\dD}{\mathfrak{D}}
\newcommand*{\eE}{\mathcal{E}}

\newcommand*{\ldD}{\mathfrak{D}^\ell}
\newcommand*{\rdD}{\mathfrak{D}^r}
\newcommand*{\dd}{\mathfrak{d}}
\newcommand*{\ldd}{\mathfrak{d}^{\ell}}
\newcommand*{\rdd}{\mathfrak{d}^{r}}

\newcommand*{\bell}{{\smash{\mathsf{lps}}}}
\newcommand*{\belr}{{\smash{\mathsf{rps}}}}
\newcommand*{\rR}{\mathcal{R}}

\newcommand*{\bellcong}{\equiv_\bell}
\newcommand*{\belrcong}{\equiv_\belr}
\newcommand*{\belxcong}{\equiv_{{\smash{\mathsf{xps}}}}}
\newcommand*{\bellcongn}{\equiv_{\bell_n}}
\newcommand*{\belrcongn}{\equiv_{\belr_n}}
\newcommand*{\dstd}{\mathrm{std}^{-1}}
\newcommand*{\stdl}{\mathrm{std}_\ell}
\newcommand*{\stdr}{\mathrm{std}_r}
\newcommand*{\std}{\mathrm{std}}
\newcommand*{\Dstd}{\mathrm{Dstd}}
\newcommand*{\Stdl}{\mathrm{Std}_\ell}
\newcommand*{\Stdr}{\mathrm{Std}_r}
\newcommand*{\Std}{\mathrm{Std}}

\def \lps {lPS}
\def \rps {rPS}
\def \ps {PS}

\newcommand*{\Sh}{\mathrm{Sh}}

\DeclarePairedDelimiter{\parens}{\lparen}{\rparen}
\newcommand*{\evlit}{{\mathrm{ev}}}
\newcommand*{\ev}[2][]{\evlit\parens[#1]{#2}}

\theoremstyle{plain}
\newtheorem{thm}{Theorem}[section]
\newtheorem{lem}[thm]{Lemma}
\newtheorem{prop}[thm]{Proposition}
\newtheorem{cor}[thm]{Corollary}

\theoremstyle{definition}

\newtheorem{algorithm}[thm]{Algorithm}

\newtheorem{qst}[thm]{Question}

\title{The monoids of the Patience sorting algorithm}

\author{Alan J. Cain}
\address{Centro de Matem\'atica e Aplica\c{c}\~oes, Faculdade de Ci\^encias e
Tecnologia, Universidade Nova de Lisboa,
	2829-516 Caparica, Portugal}
\email{a.cain@fct.unl.pt}
\thanks{The first author was supported by
an Investigador FCT fellowship (IF/01622/2013/CP1161/CT0001).}

\author{Ant\'onio
Malheiro}
\address{Centro de Matem\'atica e Aplica\c{c}\~oes and Departamento de
	Matem\'atica, Faculdade de Ci\^encias e Tecnologia, Universidade Nova de Lisboa,
	2829-516 Caparica, Portugal}
\email{ajm@fct.unl.pt}
\thanks{For the first two authors,
this work was partially supported by the Funda\c{c}\~ao para a
Ci\^encia e a Tecnologia (Portuguese Foundation for Science and Technology)
through the project UID/MAT/00297/2013 (Centro de Matem\'atica e
Aplica\c{c}\~oes), and the project PTDC/MHC-FIL/2583/2014}

\author{F\'abio M. Silva}
\address{Departamento de Matem\'atica
	and CEMAT-CI\^ENCIAS, Faculdade de Ci\^encias, Universidade de Lisboa, Lisboa
	1749-016, Portugal.}
\email{femsilva@fc.ul.pt}
\thanks{The third author was supported by an FCT Lismath fellowship
(PD/BD/52644/2014) and partially supported by the FCT project CEMAT-Ci\^encias
UID/Multi/04621/2013}

\thanks{The authors would like to thank Prof. Gracinda Gomes for her kind advice and helpful observations during the
  writing of this paper.}

\begin{document}

\begin{abstract}
  The left patience sorting (\lps) monoid, also known in the literature as the Bell monoid, and the right patient
  sorting (\rps) monoid are introduced by defining certain congruences on words. Such congruences are constructed using
  insertion algorithms based on the concept of decreasing subsequences. Presentations for these monoids are given.

  Each finite-rank \rps\ monoid is shown to have polynomial growth and to satisfy a non-trivial identity (dependent on
  its rank), while the infinite rank \rps\ monoid does not satisfy a non-trivial identity. The \lps\ monoids of finite
  rank have exponential growth and thus do not satisfy non-trivial identities. The complexity of the insertion
  algorithms is discussed.

  \rps\ monoids of finite rank are shown to be automatic and to have recursive complete presentations. When the rank is
  $1$ or $2$, they are also biautomatic. \lps\ monoids of finite rank are shown to have finite complete presentations
  and to be biautomatic.
\end{abstract}

\keywords{Patience sorting algorithm; PS tableau; semigroup
identity; growth of monoids; automatic monoid; complete rewriting system}

\maketitle
\tableofcontents

\section{Introduction}

Monoids arising from combinatorial objects have been intensively studied in recent years. Important examples include the
plactic (see \cite{MR646486, MR1905123, MR1464693}), the sylvester \cite{MR2081336, MR2142078}, the Chinese
\cite{MR1351284,MR1847182}, the hypoplactic \cite{Krob1997}, the Baxter \cite{MR2820726, MR2914637}, the stalactic
\cite{Hivert20081682}, and the Bell monoids \cite{Maxime07}. These monoids are associated to several important
combinatorial objects such as Young tableaux (plactic monoid), binary trees (sylvester monoid), Chinese staircases
(Chinese monoid) quasi-ribbon tableaux (hypoplactic monoid) or set partitions (Bell monoid), in the sense that their
elements can be identified with these objects. Their construction, in general, relies on two distinct approaches.

The first approach uses an insertion algorithm which computes a specific combinatorial object from a over a totally
ordered alphabet $\aA=\{1<2<\ldots\}$. Then the monoid is constructed by taking the quotient of the free monoid $\aA^*$
by the congruence that relates words that yield the same combinatorial object. For example, the plactic monoid can be
constructed as the quotient of the free monoid $\aA^*$ by the congruence that relates words that give rise to the same
semi-standard Young tableau under Schensted's insertion algorithm (see \cite[Chapter~5]{MR1905123}). The second method
consists of presenting a set of defining relations over $\aA^*$, which allows the construction of a congruence (which is
proven to coincide with the previous one), and again taking the quotient of $\aA^*$ over this congruence.

The richness of these monoids comes from the various perspectives from which we
can look at them, giving rise to many interesting questions. For example, in
general, to each such monoid we can associate a combinatorial Hopf algebra. Such
algebras include for instance the Poirier--Reutanauer algebra of tableaux
\textbf{FSym} arising from the plactic monoid \cite{poirier1995algebres,
duchamp2002noncommutative}, the planar binary tree algebra \textbf{PBT} of
Loday--Ronco obtained from the sylvester monoid \cite{MR2142078, Loday1998293},
the Hopf algebra of pairs of twin binary trees \textbf{Baxter} coming from the
Baxter  monoid \cite{MR2820726,MR2914637}, and the Hopf algebras \textbf{Sym}
and  \textbf{Bell} arising from the hypoplactic monoid \cite{Krob1997,
Novelli2000315} and the Bell monoid \cite{Maxime07}, respectively. However,
the importance of these monoids lies not just in the associated
combinatorial  algorithms or in the associated Hopf algebras, but also in the
fact that they are related to partial orders like the Tamari order and to Robinson--Schensted-like
correspondences \cite{MR2914637}.

We note that the plactic monoid appears in the literature defined in two different ways. When generalizing Schensted's
insertion algorithm from permutations to words, one can use either row-insertion or column-insertion. Column-insertion
is equivalent to row-insertion into a modified `Young tableau' where the entries in each row are required to be strictly
increasing and the entries in each column are required to be weakly decreasing. This gives rise to two different
anti-isomorphic `plactic monoids', one of which is more commonly used in combinatorics \cite[Chapter~5]{MR1905123} and
the other in representation theory and the theory of crystals
\cite{MR1358301,date1990representations,kashiwara1995crystal}..

The same issue appears when trying to generalize to words the Patience Sorting (\ps) algorithm described for
permutations in \cite{BL2005}.  The \ps\ monoids are the central subject of this paper and arise from the following two
possible generalizations: when considering the insertion of a symbol into a \ps\ tableaux we may allow entries to
columns to be weakly decreasing or strictly decreasing \cite{MR1694204}. This choice leads to the \rps\ monoid in the
first case and to the \lps\ monoid in the second. The \lps\ monoid is the `Bell monoid' considered in
\cite{Maxime07}. Contrary to the plactic monoid case, these generalizations give rise to two monoids which, as we will
see, are not anti-isomorphic, and, in fact, have very different properties. Section~\ref{sec:pstableaux} studies the
insertion algorithms corresponding to these two types of tableaux and shows how they give rise to congruences.


The plactic monoid of rank $n$ has polynomial growth of degree $(n^2+n)/2$ \cite{MR1351284}. It is also known that
plactic monoids of rank at most $3$ satisfy non-trivial identities but that the infinite-rank plactic monoid does not
satisfy a non-trivial identity \cite{kubat2015identities,ckkmo_placticidentity}. It is an open question whether plactic
monoids of finite rank greater than $3$ satisfy non-trivial identities. Section~\ref{sec:growth} shows that the \lps\
monoids of rank greater than $1$ have exponential growth, by proving that free monoids (of rank greater than $1$) embed
into these monoids, whereas the \rps\ monoids of finite rank have polynomial growth.  As a consequence, we deduce that
unlike other monoids associated to combinatorial objects such as the hypoplactic, the sylvester, the Baxter, the
stalactic and others \cite{1611.04151}, the \lps\ monoids of rank greater than $1$ do not satisfy non-trivial
identities. We also prove that finite-rank \rps\ monoids satisfy identities but that the infinite-rank \rps\ monoid does
not.

Finite-rank plactic, Chinese, and hypoplactic monoids are all presented by finite complete rewriting systems and the
finite-rank sylvester monoids are presented by regular complete rewriting systems \cite{CGMrewriting,chen2008grobner,
  karpuz2010complete,MR3283708}. Section~\ref{sec:complete} shows that finite-rank \lps\ monoids are also presented by
finite complete rewriting systems. As a corollary, we deduce that these monoids satisfy the homological finiteness
condition $FP_\infty$. We also show that \rps\ monoids are presented by infinite complete rewriting systems.

Many of the monoids mentioned above, such as finite-rank plactic, Chinese, hypoplactic, and sylvester monoids, are
biautomatic. Automaticity, introduced by Epstein et al. \cite{MR1161694} and later generalized to semigroups and monoids
\cite{MR1795250}, is a way of describing and computing with semigroups using finite-state automata. Section~\ref{sec4}
proves that finite-rank \rps\ monoids are automatic and that finite-rank \lps\ monoids are biautomatic. We deduce as a
corollary that the \rps\ and \lps\ monoids have word problem solvable in quadratic time.

\section{Preliminaries and notation}

In this section we introduce the notions that we shall use along the paper.
For more details concerning these constructions see for instance
\cite{MR1905123} and \cite{MR2142078} for words and standardization, and
\cite[Subsection~1.6]{howie1995fundamentals} and \cite{MR1215932} for presentations and
rewriting systems.

\subsection{Words and standardization}
\label{alphabetswords}

In what follows, let  $\aA=\{1 < 2 < 3<\ldots \}$ be the set of natural numbers
viewed as an infinite well-ordered  alphabet and let $\aA^*$ be the corresponding free monoid. Furthermore, for any $n\in \mathbb{N}$, denote by  $\aA_n$ the totally ordered subset of $\aA$,  on the letters $1,\ldots, n$ and $\aA_n^*$ the corresponding free monoid.

In general, a \emph{word} over an arbitrary alphabet is an element of the free monoid over that alphabet with the symbol $\varepsilon$ denoting the empty word. So, if $w=w_1\cdots w_m$ is an arbitrary word of $\aA^*$, with $w_1,\ldots,w_m\in \aA$, there are several notions that are directly related with the definition of word, which we define as follows:
\begin{itemize}
\item the \emph{length of $w$}, denoted $|w|$, is the number of symbols from $\aA$ in $w$, counting repetitions;

\item for any word $u=u_1\cdots u_k\in \aA^*$, with $u_1,\ldots,u_k\in \aA$, $u$ is a \emph{subsequence of} $w$ if there exists a sequence of indexes, $i_1,\ldots, i_k\in \mathbb{N}$, with $1\leq i_1<\ldots<i_k\leq m$, such that $u=w_{i_1}\cdots w_{i_k}$;

\item for any word $u\in \aA^*$, the word $u$ is a \emph{factor of} $w$ if there exist words $v_1,v_2\in \aA^*$, such that $w=v_1uv_2$;

\item for any $a\in \aA$, the number of occurrences of $a$ in $w$, is denoted by $|w|_a$;

\item the \emph{evaluation of} $w$, $ \ev{w}$, is the infinite sequence
of non-negative integers $(|w|_1,|w|_2,|w|_3, \ldots)$ whose $i$-th component
is $|w|_i$.
\end{itemize}
Another related concept that we will frequently use is of standard word. A \emph{standard word}  over a well-ordered  alphabet $\Sigma$ is a word $u$ that contains each symbol from the first $|u|$ symbols of $\Sigma$ exactly once. A standard word $u=u_1\cdots u_k$ in naturally identified with the permutation that maps $i\mapsto u_i$.

Next, we define two different processes for standardizing a word, both allowing us to associate to any given word, a standard word of the same length. Consider the alphabet $\mathcal{C}=\{a_{b}: a,b\in \aA\}$ well-ordered in the following way: for any $a_b,c_d\in \mathcal{C}$,
\begin{equation*}
	a_{b}\prec c_{d} \Leftrightarrow a<c \vee \left(a=c\wedge b<d\right).
\end{equation*}
 Given a symbol $a_{b}\in\mathcal{C}$, $a$ will be called the \emph{underlying symbol of} $a_{b}$ and $b$ the \emph{index of} $a_{b}$.

For any $w\in \aA^*$, the \emph{left to right standardization of $w$}, denoted $\stdl(w)$, which corresponds to the standardization presented in \cite{MR2142078}, is the word in $\mathcal{C}$ obtained by iteratively scanning all the occurrences of $1$ and labelling them with $1_1,1_2,\ldots$ from left to right, then scanning all the occurrences of $2$ and labelling them with $2_1,2_2,\ldots$ from left to right, and repeating this process for all the symbols occurring in $w$.

The \emph{right to left standardization of $w$}, denoted $\stdr(w)$, is obtained in the same way, but instead of the labelling being done from left to right, it is done from right to left.

For example, considering the word $w=1321221\in \aA_3^*$, we get
\begin{alignat*}{9}
&w && = &&1 &&{\color{blue}3}\ &&{\color{red}2}\ &&1\ &&{\color{red}2}\
&&{\color{red}2}\ &&1\ \\
&\stdl(w) && =\quad &&1_1\ &&{\color{blue}3_1}\ &&{\color{red}2_1}\ &&1_2\
&&{\color{red}2_2}\ &&{\color{red}2_3}\ &&1_3\ \\
&\stdr(w) && =\quad &&1_3\ &&{\color{blue}3_1}\ &&{\color{red}2_3}\ &&1_2\
&&{\color{red}2_2}\ &&{\color{red}2_1}\ &&1_1\ .
\end{alignat*}
Any word in $\mathcal{C}$ arising from the processes of left-to-right or right-to-left standardization will be called a standardized word.

On the other hand, for any word $w\in \mathcal{C}^*$, the \emph{de-standardization of} $w$, denoted by $\dstd(w)$, is
obtained by erasing the indexes of the underlying symbols of $w$.  Considering the previous example, where $w=1321221$,
we obtain
\begin{align*}
	\dstd(\stdl(w))=\dstd(1_13_12_11_22_22_31_3)=w
\end{align*}
and
\begin{align*}
	\dstd(\stdr(w))=\dstd(1_33_12_31_22_22_11_1)=w.
\end{align*}
As standardization never `forgets' the original symbols of the considered word, just adds indexes to them and the de-standardization just erases them, we can conclude that, for any word $w\in \aA^*$
\begin{align*}
	\dstd(\std(w))=w,
\end{align*}
where $\std$ can be replaced by either $\stdl$ or $\stdr$.

\subsection{Presentations and rewriting systems}

On a different direction, a \emph{monoid presentation} is a pair $(\Sigma,\mathcal{R})$, where $\Sigma$ is an alphabet and $\mathcal{R}\subseteq \Sigma^*\times \Sigma^*$. A monoid $M$ is said to be defined by a \emph{presentation} $(\Sigma,\mathcal{R})$ if $M\simeq \Sigma^*\!/\mathcal{R}^\#$, where $\mathcal{R}^\#$ is the smallest congruence containing $\mathcal{R}$ (see \cite[Proposition~1.5.9]{howie1995fundamentals} for a combinatorial description of the smallest congruence containing a relation). The presentation is called \emph{finite} if both $\Sigma$ and $\mathcal{R}$ are finite and \emph{multihomogeneous} if, for each symbol $a\in \Sigma$ and every defining relation $(w,w')\in\mathcal{R}$, $|w|_a=|w'|_a$. A monoid $M$ is said to be multihomogeneous if there exists a multihomogeneous presentation defining $M$.

The set of relations $\mathcal{R}$ can also be referred to as a \emph{rewriting system} whose elements are the \emph{rewriting rules}. This way, the rewriting rules $r\in\mathcal{R}$ will be written in the form $r=(r^+,r^-)$.

From this point of view, we can define another binary relation in $\Sigma^*$, $\rightarrow_\mathcal{R}$, in the following way
\begin{equation*}
	u \rightarrow_\mathcal{R} v\ \Leftrightarrow\ u=w_1r^+w_2\ \text{ and }\ v=w_1r^-w_2
\end{equation*}
for some $(r^+,r^-)\in\mathcal{R}$ and $w_1,w_2\in\Sigma^*$. The binary relation $\rightarrow_\mathcal{R}$ is known  as \emph{single-step reduction}. A word $u\in \Sigma^*$ is said to be \emph{irreducible} if there is no word $v\in \Sigma^*$ such that $u\rightarrow_\mathcal{R} v$. Define $\rightarrow_\mathcal{R}^*$ to be the transitive and reflexive closure of $\rightarrow_\mathcal{R}$. Note that the relation $\mathcal{R}^\#$ coincides with the reflexive, symmetric, and transitive closure of $\rightarrow_{\mathcal{R}}$.

We say that the rewriting system $\mathcal{R}$ is \emph{noetherian} if there is no infinite chain of single step reductions
\begin{equation*}
	w_1 \rightarrow_\mathcal{R} w_2 \rightarrow_\mathcal{R} w_3 \rightarrow_\mathcal{R} \cdots .
\end{equation*}

The rewriting system $\mathcal{R}$ is said to be \emph{confluent} if whenever $u\rightarrow_\mathcal{R}^*v$ and $u\rightarrow_\mathcal{R}^*v'$, there exists $w\in \Sigma^*$ such that $v\rightarrow_\mathcal{R}^*w$ and $v'\rightarrow_\mathcal{R}^* w$.

In the case of $\mathcal{R}$ being both noetherian and confluent, we say that $\mathcal{R}$ is \emph{complete}.

A presentation is said to be noetherian, confluent or complete if the corresponding rewriting system is, respectively,
noetherian, confluent or complete.

\section{\ps\ tableaux, insertion and \ps\ monoids}
\label{sec:pstableaux}

One of the possible ways of defining the plactic monoid is by considering it as the quotient of a free monoid by the
congruence that relates words that yield the same semistandard Young tableau, under Schensted's insertion algorithm
\cite{MR0121305}.

In this section we shall present the concepts that parallel the notions of semistandard Young tableau and Schensted's
algorithm for words in the case of Patience Sorting. The Patience Sorting algorithm was originally defined to sort a
permutation into piles.  Two natural generalizations to words (and not only for `permutations') of the Patience Sorting
algorithm appear in \cite[Subsection~2.4]{MR1694204}. Each word gives rise to a \ps-tableau, and the set of pairs of
words yielding the same \ps-tableau is a congruence. This will allow us to define the \lps\ and the \rps\ monoids in a
similar way to the plactic monoid.

\subsection{\ps -tableaux and insertion}
\label{section1.1}
A \emph{composition diagram}  is a finite collection of boxes
arranged in bottom-justified columns, where no
order on the length of the columns is imposed. The shape of a composition
diagram $B$, denoted by $\Sh({B})$, is the
sequence of the heights of the columns of $B$ from left to
right. Note that this notion of shape is  dual to the usual notion
of shape, as given in \cite{MR1905123}.

An \emph{\lps\ tableau} (respectively, an \emph{\rps\ tableau}) is a
composition diagram  with entries
from $\aA$, so that the sequence of entries of the boxes in each column is
strictly (resp., weakly) decreasing  from top to bottom, and the
sequence of entries
of the boxes in the bottom row is weakly (resp., strictly) increasing
from left to right.

Consider the following diagrams:
	\begin{equation} \label{exmp1.1}
	\ytableausetup
	{centertableaux=bottom}
	B = \begin{ytableau}
		\none & 5 & 6 & \none \\
		\none & 4 & 3 & 5 \\
		2 & 2 & 2 & 4
	\end{ytableau}\qquad
	C = \begin{ytableau}
	2 & 5 & 6 \\
	2 & 4 & 5 \\
	2 & 3 & 4
	\end{ytableau}
	\end{equation}
$B$ is an \lps\ tableau with shape $\Sh({B})=(1,3,3,2)$, whereas $C$ is an \rps\ tableau with shape $\Sh({C})=(3,3,3)$.

Henceforth, we shall often refer to an \lps\ tableau or to an \rps\ tableau simply as a \ps\ tableau, not distinguishing
the cases when they can be dealt in a similar way.


The Patience Sorting algorithm 
defined only for permutations by Burstein and Lankham \cite[Algorithm 1.1]{BL2005}, can be generalized to arbitrary words in two different ways, depending on whether or not we allow repeated symbols on the same column \cite[Subsection~3.2]{THOMAS2011610}.
We begin by presenting the algorithm that computes
from an \lps\  (resp., \rps)
tableau $B$ and a symbol $a\in \aA$,   an \lps\ (resp., \rps) tableau $B \leftarrow a$.

\begin{algorithm}[Right insertion of a symbol on a \ps\ tableau]\label{alg:Bell}
~\par\nobreak
\textit{Input:} An \lps\ (resp., \rps) tableau $B$ and a symbol $a\in \aA$.

\textit{Output:} An \lps\  (resp., \rps) tableau $B \leftarrow a$.

\textit{Method:}
\begin{enumerate}
 \item If  $a$ is greater than or equal to (resp., greater
than) every entry in the bottom row of
$B$, create a box with $a$ as an entry at the rightmost end of the bottom row
of $B$, and output the resulting tableau. (Note that the insertion can be done
to
an empty PS tableau, $\emptyset$, which corresponds to the creation of a PS tableau with a single box).
\item Otherwise, let $z$ be the leftmost  letter in the bottom row of $B$ that
is greater (resp., greater than or equal) than $a$. Create a box at the
top of the column tableau where
$z$ was placed, move  each element of the column to the box strictly above, and let $a$ be the
symbol on the bottom box. Output the resulting tableau.
\end{enumerate}
\end{algorithm}

It is straightforward to check that the procedure outputs an \lps\  (resp., \rps)
tableau whenever the algorithm starts with an \lps\ (resp., \rps) tableau.

If in the previous algorithm, the symbol to be inserted and the symbols in the
\ps\ tableaux are all distinct, then Algorithm~\ref{alg:Bell} follows the same
procedure as Algorithm 1.1 in \cite{BL2005}. Moreover, both the \lps\ version and
the \rps\ version execute the algorithm in the same way.

The iterated insertion of the symbols of a word (read from left to right)
into the resulting tableaux using the previous algorithm gives rise to the
\ps\ insertion of a word, which is described in detail by the following
algorithm:

\begin{algorithm}[\ps\ algorithm for words]\label{alg:Bellword}
	~\par\nobreak
	\textit{Input:} A word $w=w_1\cdots w_k\in \aA^*$, with $w_1,\ldots,w_k\in
	\aA$.

	\textit{Output:} An \lps\ (resp., \rps) tableau $\bB_\ell(w)$ (resp., $\bB_r(w)$).

	\textit{Method:}
Start with the empty tableau $P_0=\emptyset$. For each $i=1,\ldots, k$, insert $w_i$ into the \lps\ (resp., \rps) tableau $P_{i-1}$ using Algorithm~\ref{alg:Bell}. Output $P_k$ for $\bB_\ell(w)$ (resp., $\bB_r(w)$).
\end{algorithm}

Note that although Algorithms~\ref{alg:Bell} and \ref{alg:Bellword} consider words on the alphabet $\aA$, they also can be used considering words of any totally ordered alphabet. Hence, in any of the results of this paper, the alphabet $\aA$ can be replaced by some other totally ordered alphabet.

We use the notation $\bB(w)$ to refer to
any of the tableaux $\bB_\ell(w)$ or $\bB_r(w)$, indistinguishably.
Denoting by $\emptyset$ the empty \ps\ tableau, if $w=w_1\cdots w_k$, we have
\begin{equation*}
	\bB(w) = \Big(\cdots \big((\emptyset \leftarrow w_1) \leftarrow w_2 \big)\cdots
	\Big)\leftarrow w_k.
\end{equation*}

For example, if we are given the word $w=254263542$ its \lps\ tableau is
obtained as pictured in
Figure~\ref{figure:right_insertion}.
\begin{figure}[ht]
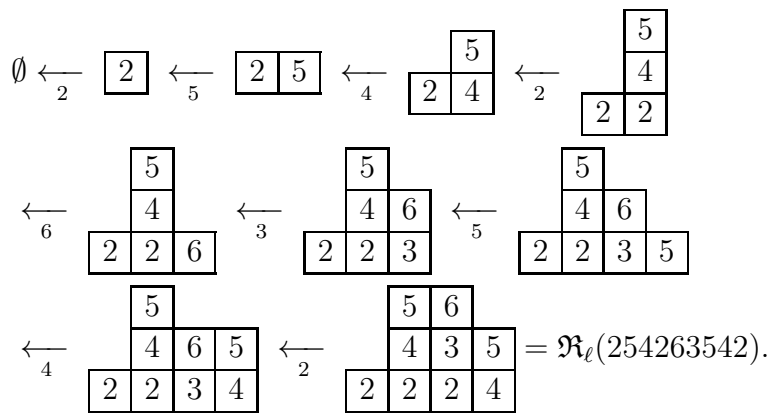
\label{figure:right_insertion}
\centering
\begin{align*}
&\emptyset \xleftarrow[2]{\ \ \ }\
\ytableausetup
{mathmode, boxsize=1.3em, aligntableaux=center}
\begin{ytableau}
2
\end{ytableau}\
\xleftarrow[5]{\ \ \ }\
\begin{ytableau}
2 & 5
\end{ytableau}\
\xleftarrow[4]{\ \ \ }\
\begin{ytableau}
\none & 5\\
2 & 4
\end{ytableau}\
\xleftarrow[2]{\ \ \ }\
\begin{ytableau}
\none & 5\\
\none & 4\\
2 & 2
\end{ytableau}\\
&\xleftarrow[6]{\ \ \ }\
\begin{ytableau}
\none & 5 & \none\\
\none & 4 & \none\\
2 & 2 & 6
\end{ytableau}\
\xleftarrow[3]{\ \ \ }\
\begin{ytableau}
\none & 5 & \none\\
\none & 4 & 6\\
2 & 2 & 3
\end{ytableau}\
\xleftarrow[5]{\ \ \ }\
\begin{ytableau}
\none & 5 & \none\\
\none & 4 & 6\\
2 & 2 & 3 & 5
\end{ytableau}\\
&\xleftarrow[4]{\ \ \ }\
\begin{ytableau}
\none & 5 & \none & \none \\
\none & 4 & 6 & 5 \\
2 & 2 & 3 & 4
\end{ytableau}\
\xleftarrow[2]{\ \ \ }\
\begin{ytableau}
\none & 5 & 6 & \none \\
\none & 4 & 3 & 5 \\
2 & 2 & 2 & 4
\end{ytableau}=\bB_\ell(254263542).\
\end{align*}
\caption{Execution of Algorithm~\ref{alg:Bellword}  to the word $w=254263542$, where below each arrow we indicate
the symbol to be inserted next, and on its right the result of that insertion
into the previous tableau.}
\end{figure}
As noticed, if all symbols of a word are distinct the resulting tableau is the
same, independently of  using the \lps\ or the \rps\ version of
Al\-gorithm~\ref{alg:Bellword} or \cite[Algorithm~1.1]{BL2005}. So, if $\sigma$
is a word where the symbols are all distinct, our notation $\bB(\sigma)$ agrees
with the notation used in \cite{BL2005} to denote the output of a permutation
using Algorithm~1.1, and we have
\begin{equation}\label{exmp1.2}
\bB_\ell(\sigma)=\bB(\sigma)=\bB_r(\sigma)
\end{equation}
for any such $\sigma$.


The following lemma relates the position of repeated symbols in the word to their position in the tableau.

\begin{lem}\label{lemma:equal_symbols_in_tableau}
Let $w$ be a word on $\aA$ and let  $a_1,a_2\in\aA$ be two symbols of $w$ with $a_1\leq a_2$. Then $a_1a_2$ is a subsequence of $w$ if, and only if, the symbol $a_2$ is positioned in $\bB(w)$ either in the same column of $a_1$, below $a_1$, (only in the \rps\ case) or  in a column further to the right of the column containing $a_1$.
\end{lem}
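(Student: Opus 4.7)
The plan is to track the cells of the two specified occurrences of $a_1$ and $a_2$ through the successive applications of Algorithm~\ref{alg:Bell} in the running of Algorithm~\ref{alg:Bellword} on $w$. The crucial structural observation is that each insertion either creates a new column at the right end or pushes the new symbol to the bottom of an existing column, sliding the previous entries one position upward; hence once a letter of $w$ has been placed it never leaves its column, and later insertions can only move existing entries upward within a column. In particular, it suffices to analyse the configuration of the cells of $a_1$ and $a_2$ at the moment the later of the two (in $w$) is inserted, since subsequent insertions preserve the column of each cell and the relative column ordering.

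For the forward direction, let $p_1<p_2$ be positions of $a_1,a_2$ in $w$, and consider the tableau $T=\bB(w_1\cdots w_{p_2-1})$ just before $a_2$ is processed. Let $C$ be the column of $T$ containing the $a_1$-cell, with bottom entry $b$; strict (lPS) or weak (rPS) decrease down $C$ gives $b\leq a_1\leq a_2$. If the appendability condition holds for $a_2$, then $a_2$ is placed as a new column strictly to the right of $C$. Otherwise the leftmost bottom entry $z$ with $z>a_2$ (lPS) or $z\geq a_2$ (rPS) satisfies the corresponding inequality with $b$; combining this with the monotonicity of the bottom row from left to right forces $z$ to lie either in $C$ itself or in a column strictly to the right of $C$. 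The former occurs only in the rPS case with $b=a_1=a_2$, in which event $a_2$ ends up at the bottom of $C$ and hence below $a_1$; in every other case $a_2$ lands in a column strictly to the right of $C$. Subsequent insertions preserve these column relations, yielding the configuration in $\bB(w)$ claimed by the lemma.

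The converse is the delicate part, and I would establish it by contrapositive: assume $p_2<p_1$ and run the symmetric analysis on the insertion of $a_1$ into $\bB(w_1\cdots w_{p_1-1})$ in order to show that $a_1$'s cell cannot land strictly to the left of $a_2$'s column, and cannot land above $a_2$'s cell in the same column (which would rule out the rPS ``below'' scenario). The main obstacle is precisely this backward step: the hypothesis $a_1\leq a_2$ is asymmetric in $a_1$ and $a_2$, so one cannot simply reapply the forward direction with the roles swapped, and the case analysis must be redone from scratch; special care is needed in the boundary case $a_1=a_2$, where the two occurrences are distinguished only by their order in $w$ and where one must exploit the fact that successive insertions of the same value into the tableau place the later occurrence strictly below the earlier one (rPS) or in a strictly rightward column (lPS).
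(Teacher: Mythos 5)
Your forward direction is correct and is essentially the paper's argument: at the moment the later occurrence is inserted, the bottom entry $b$ of the column $C$ containing $a_1$ satisfies $b\leq a_1\leq a_2$, so by monotonicity of the bottom row the bumped (or newly created) column lies strictly to the right of $C$, except in the \rps\ case with $b=a_1=a_2$, where $a_2$ lands at the bottom of $C$; since cells never change column and only move upward, these relations persist in $\bB(w)$. The paper states this more tersely but it is the same proof.

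The converse is where your proposal has a genuine gap. You correctly observe that one cannot simply reapply the forward direction to the reversed pair $a_2a_1$, since the hypothesis $a_1\leq a_2$ is asymmetric --- yet that reversal is exactly what the paper's proof does (``by the direct part \ldots applied to the subsequence $a_2a_1$''), and that step is legitimate only when $a_1=a_2$. Your alternative, redoing the case analysis for the insertion of $a_1$ after $a_2$, cannot be completed, because the claim you are aiming for is false when $a_1<a_2$: take $w=231$ with $a_1=1$, $a_2=3$. Then $\bB_\ell(w)=\bB_r(w)$ has columns $21$ and $3$ (reading top to bottom), so $a_2=3$ sits in a column strictly to the right of the column containing $a_1=1$, even though $13$ is not a subsequence of $w$. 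Thus the ``if'' implication of the lemma holds only in the boundary case $a_1=a_2$ --- which is precisely the case you single out as needing special care, is the only case in which the paper later invokes this direction (in the proof of Proposition~\ref{obs1.4}; Lemma~\ref{lem:identity1} uses only the forward implication), and is the case in which your symmetric analysis does go through: for equal symbols, the forward direction applied to the earlier-then-later pair places the earlier occurrence weakly to the left and, within a column, above the later one, which is incompatible with the stated positional condition. So you should either restrict the converse to $a_1=a_2$ or prove only the forward implication plus the equal-symbols biconditional; as written, neither your sketch nor the paper's own one-line reversal delivers the full ``if and only if'' for $a_1<a_2$.
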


\begin{proof}
Suppose that $a_1a_2$ is  a subsequence of $w$,  that is, $w=w_1a_1w_2a_2w_3$. In the computation of $\bB_\ell(w)$ using Algorithm~\ref{alg:Bellword}, the symbol $a_2$ is inserted after $a_1$, and thus will be in a column further to the right of the column in which $a_1$ is positioned. Indeed, on the step where the symbol $a_2$ is inserted, the bottom symbol of the column in which $a_1$ is positioned  is smaller than $a_1$, and thus by   Algorithm~\ref{alg:Bell}, the symbol $a_2$ will be inserted to a column further to the right.

 In $\bB_r(w)$ the reasoning is similar, with the additional condition that $a_2$ can be inserted in the same column below $a_1$ if during the execution of Algorithm~\ref{alg:Bell}, the bottom  symbol currently in the column of $a_1$ is equal to $a_1$.

Conversely,  suppose that $a_1a_2$ is not a subsequence of $w$. This means that $a_2a_1$ is a subsequence of $w$ and therefore, by the direct part of the statement already proved, applied to the subsequence $a_2a_1$, we reach a contradiction.
\end{proof}

Two words $u,v$, possibly over different totally ordered alphabets, whose \lps\ tableaux have
the same shape (and thus have the same length) are said to have
\emph{equivalent $l$-insertions} 
if for every
$i=1,\ldots, |u|$ the $i$-th symbol of $u$ and the $i$-th symbol of $v$ are in
the same (column-row) position of the respective tableaux
$\bB_\ell(u)$ and $\bB_\ell(v)$. Similarly, we define words to have \emph{equivalent
$r$-insertions} by replacing $\bB_\ell(u)$ and $\bB_\ell(v)$ by $\bB_r(u)$ and
$\bB_r(v)$, respectively, in the previous definition.

The words $u=24131$ and $v=36142$, whose \lps\ tableaux are
	\begin{equation*}
	\ytableausetup{mathmode, boxsize=1.3em, aligntableaux=center}
	\bB_\ell(u)=
	\begin{ytableau}
	\none & 4\\
	2 & 3\\
	1 & 1
	\end{ytableau}\qquad \text{and}\qquad \bB_\ell(v)=
	\begin{ytableau}
	\none & 6\\
	3 & 4\\
	1 & 2
	\end{ytableau}
	\end{equation*}
have equivalent $l$-insertions.
%
%
%
%

\begin{lem}\label{lemma:equivalent_insertions}
	For any $w\in \aA^*$, the words $w$ and $\stdl(w)$ have equivalent
$l$-insertions, and $w$ and $\stdr(w)$ have equivalent $r$-insertions.
\end{lem}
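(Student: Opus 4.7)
The plan is to prove both statements by induction on $|w|$. The base case $w=\varepsilon$ is trivial, since standardization fixes the empty word and both insertions yield the empty tableau. For the inductive step I write $w = u\,a$ with $a\in\aA$, so that I only have to compare one extra insertion in each case. The guiding observation is that the order $\prec$ on the standardized alphabet $\mathcal{C}$ agrees with the order on $\aA$ whenever two letters have different underlying symbols, so the only way an insertion could diverge from its unstandardized counterpart is at a column whose bottom entry has underlying value equal to $a$; my task reduces to controlling exactly that case for each of the two algorithms.

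For the $l$-insertion statement, I would first note that $\stdl(w) = \stdl(u)\cdot a_j$ with $j = |u|_a+1$, since left-to-right labelling commutes with appending on the right. By the inductive hypothesis, $\bB_\ell(u)$ and $\bB_\ell(\stdl(u))$ have the same shape and corresponding positions. Algorithm~\ref{alg:Bell} in the lPS-case selects the leftmost column whose bottom is strictly greater than $a$ (respectively strictly $\succ a_j$). Columns whose bottom has underlying value $> a$ agree between the two versions; columns whose bottom has underlying value $<a$ are ruled out in both; and columns whose bottom equals $a$ in $\bB_\ell(u)$ correspond to columns with bottom $a_i$ for some $i \leq j-1$, which satisfies $a_i \prec a_j$ and is therefore rejected in both versions. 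Hence the same destination column is chosen and the new entry is appended in the matching position in both tableaux.

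The $r$-insertion case is more delicate because $\stdr$ indexes from the right, so $\stdr(w)$ restricted to the first $|u|$ positions is \emph{not} $\stdr(u)$: the newly appended $a$ consumes index $1$, which shifts every $a$-index in the previous part up by one. I would handle this by writing $\stdr(w) = \sigma(\stdr(u))\cdot a_1$, where $\sigma$ adds one to each $a$-label, and observing that $\sigma$ preserves $\prec$ and hence leaves the behaviour of Algorithm~\ref{alg:Bell} untouched; consequently $\bB_r(\sigma(\stdr(u)))$ is positionally identical to $\bB_r(\stdr(u))$. Combined with the inductive hypothesis this gives equivalence of $r$-insertions between $u$ and $\sigma(\stdr(u))$, after which the column-selection analysis runs parallel to the $l$-case: columns with bottom $=a$ in $\bB_r(u)$ correspond to columns with bottom $a_i$ for some $i\geq 2$, which satisfies the rPS-condition `$\succeq a_1$' just as $a$ satisfies `$\geq a$'. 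The index-shift phenomenon in $\stdr$ is the one genuinely subtle point; once the auxiliary relabelling $\sigma$ is introduced, the remaining verification is a routine case analysis on the bottom row.
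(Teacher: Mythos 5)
Your proof is correct and follows essentially the same route as the paper's: induction on the length of $w$, appending one letter and checking that the order-compatibility of standardization forces Algorithm~\ref{alg:Bell} to choose the same destination column in both tableaux. In fact you are somewhat more careful than the paper on the $\stdr$ case, where the index shift of earlier occurrences of $a$ (your relabelling $\sigma$) is a genuine subtlety that the paper's proof passes over silently by treating $\bB_r(\stdr(wa))$ as $\bB_r(\stdr(w))\leftarrow a'$; your observation that $\sigma$ preserves $\prec$ is exactly what justifies that step.
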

\begin{proof}
	Let $w\in \aA^*$. The proof follows by induction on the length of $w$,
$|w|$. The result holds trivially for  $|w|=1$.

	Suppose by induction hypothesis that the result holds for any word $w$
with $|w|=n>1$. Let $a_1,a_2,\ldots,a_k$ and $a'_1,a'_2,\ldots,a'_k$ be
the sequence of entries in the boxes of the bottom row of, respectively
$\bB_\ell(w)$ and $\bB_\ell\big(\stdl(w)\big)$ (resp., $\bB_r(w)$ and $\bB_r\big(\stdr(w)\big)$).
Consider $a\in \aA$ and let $a'$ be the rightmost element in the word
$\stdl(wa)$ (resp., $\stdr(wa)$).

If $a_k\leq a$ (resp., $a_k< a$), then we
are in case (1) of  Algorithm~\ref{alg:Bell} and therefore a box is added
to the rightmost end of the bottom row of $\bB_\ell(w)$ (resp., $\bB_r(w)$), with
the symbol $a$ in it.

The symbol $a'_k$ is to the left of the symbol $a'$ in the word $\stdl(wa)$
(resp., $\stdr(wa)$). By the left-to-right standardization (resp., right-to-left
standardization) we have $a'_k<a'$, since $a_k\leq a$ (resp., $a_k< a$). To
compute $\bB_\ell\big(\stdl(wa)\big)=\bB_\ell\big(\stdl(w)\big)\leftarrow a'$ (resp.
$\bB_r\big(\stdr(wa)\big)=\bB_r\big(\stdr(w)\big)\leftarrow a'$) we  also use step
 (1) of Algorithm~\ref{alg:Bell}. A box is added
to the rightmost end of the bottom row of $\bB_\ell\big(\stdl(w)\big)$ (resp.
$\bB_r\big(\stdr(w)\big)$), with the symbol $a'$ in it. Therefore,
the $n+1$-th symbols of words $wa$ and $\stdl(wa)$ (resp., $\stdr(wa)$)
take the same position of the respective tableaux. Thus, $wa$ and $\stdl(wa)$
(resp., $wa$ and $\stdr(wa)$) have equivalent $l$-insertions (resp.
$r$-insertions).

Suppose now that $a< a_k$ (resp., $a\leq a_k$). Let $a_j$, with $j\in
\{1,2,\ldots, k\}$ be the leftmost letter in the bottom row of $\bB_\ell(w)$ (resp.
$\bB_r(w)$) such that $a<a_j$ (resp., $a\leq a_j$). We have the following
situation:
	\begin{equation*}
	\label{order1}
	a_1\leq \ldots\leq a_{j-1}\leq a<a_j\leq \ldots\leq a_k
	\end{equation*}
	\begin{equation*}
	\label{order2}
	(\text{resp., } a_1< \ldots< a_{j-1}< a\leq a_j< \ldots< a_k.)
	\end{equation*}
	Therefore, case (2) of Algorithm~\ref{alg:Bell} is applied
and so $a$ bumps all the elements of the column of $a_j$, being inserted below
the $a_j$ in the bottom row.

By the left-to-right standardization (resp., right-to-left
standardization) the
inequalities  $a'_{j-1}< a'$ and $a'<a'_j$ hold, since $a_{j-1}\leq a$ and
$a<a_j$ (resp., $a_{j-1}< a$ and $a\leq a_j$). Therefore, to compute
$\bB_\ell(\stdl(w))\leftarrow a $ (resp., $\bB_r(\stdr(w))\leftarrow a $)  we apply case (2)
of Algorithm~\ref{alg:Bell}, and so $a'$ bumps all the elements of the
column of $a'_j$ being inserted in the bottom row below $a'_j$.

Clearly, the $n+1$-th symbols of the words $wa$ and $\stdl(wa)$ (resp.
$\stdr(wa)$)
take the same position of the respective tableaux. Similarly, the symbols of
the columns $a_j$ and $a'_j$ will have the same relative positions in the new
tableaux, by the induction hypothesis.
Therefore, $wa$ and $\stdl(wa)$ (resp., $wa$ and $\stdr(wa)$) have equivalent
$l$-insertions (resp., $r$-insertions). The result follows by induction.
\end{proof}

Similarly to the standardization and de-standardization of words presented in Subsection~\ref{alphabetswords}, we present here corresponding standardization and de-standardization processes of \ps\ tableaux.

Given a \ps\ tableau $B$, let $\Stdl(B)$ be the tableaux obtained from $B$ reading the entries of $B$ column by column, from left to right, and on each column from top to bottom, and attaching to each symbol $a\in\aA$ an index $h$ to the $h$-th appearance of $a$.  Similarly, $\Stdr(B)$ is the tableaux obtained from $B$ reading the entries of $B$ column by column, from right to left, and on each column from bottom to top, attaching to each symbol $a\in\aA$ an index $h$ to the $h$-th appearance of $a$.

 If the \ps\ tableaux $B$ has symbols form a standardized word, the de-standardization of $B$, denoted $\Dstd(B)$, is the tableau produced by erasing the indexes of each of its underlying symbols.

The following
result relates both versions of Algorithm~\ref{alg:Bellword}, with Algorithm~1.1
 of \cite{BL2005}.

\begin{prop}\label{obs1.4}
For any word $w\in \aA^*$ and $x \in \{\ell,r\}$ we have
\begin{enumerate}
\item $\bB_x(\std_x(w))=\Std_x(\bB_x(w))$; and
\item $\Dstd\big(\bB_x\big(\std_x(w)\big)\big)=\bB_x(w)$.
\end{enumerate}
\end{prop}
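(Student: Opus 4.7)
My plan is to prove (1) directly by combining the two earlier lemmas, and then deduce (2) as an immediate corollary.

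For (1), I would proceed in two steps. First, I would use Lemma~\ref{lemma:equivalent_insertions}: since $w$ and $\std_x(w)$ have equivalent $x$-insertions, the tableaux $\bB_x(w)$ and $\bB_x(\std_x(w))$ have the same shape, and for every box position $(i,j)$ the underlying symbol of the entry in $\bB_x(\std_x(w))$ at $(i,j)$ coincides with the entry of $\bB_x(w)$ at $(i,j)$. Thus $\bB_x(\std_x(w))$ differs from $\Std_x(\bB_x(w))$ only possibly in the indices attached to each symbol, so it suffices to show these indices agree.

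The second step is to verify the index matching. Fix a symbol $a\in\aA$ and list its occurrences in $w$ as $a^{(1)},a^{(2)},\ldots$ in the order $\std_x$ processes them (left-to-right if $x=\ell$, right-to-left if $x=r$); these are given indices $a_1,a_2,\ldots$ respectively in $\std_x(w)$. I need to show that reading $\bB_x(w)$ in the order prescribed by $\Std_x$ encounters the positions of $a^{(1)},a^{(2)},\ldots$ in the same order. This is where Lemma~\ref{lemma:equal_symbols_in_tableau} does the work: any two occurrences $a^{(p)},a^{(q)}$ with $p<q$ (so $a^{(p)}a^{(q)}$ is a subsequence of $w$) end up either in the same column with $a^{(q)}$ strictly below $a^{(p)}$ (only possible in the $r$-case) or in a column of $a^{(q)}$ strictly to the right of that of $a^{(p)}$. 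A short case analysis then matches the two reading orders: in the $\ell$-case equal symbols sit in distinct columns, so reading columns left-to-right reproduces the left-to-right order in $w$; in the $r$-case, reading columns right-to-left (and each column bottom-to-top) gives the later occurrences first, matching the right-to-left processing of $\stdr$, whether the two occurrences are in the same column (bottom-to-top puts $a^{(q)}$ before $a^{(p)}$) or in different columns (the rightmost column is read first). Hence the indices assigned by $\Std_x$ coincide with those in $\std_x(w)$, and (1) follows.

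Part (2) is then immediate: by definition $\Dstd$ simply erases the indices of a tableau whose entries form a standardized alphabet, so $\Dstd(\Std_x(B))=B$ for any \ps\ tableau $B$. Setting $B=\bB_x(w)$ and applying (1) yields
\[
\Dstd\bigl(\bB_x(\std_x(w))\bigr)=\Dstd\bigl(\Std_x(\bB_x(w))\bigr)=\bB_x(w).
\]

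The main obstacle is the bookkeeping in the second step: one must be careful that the reading conventions used in $\Std_\ell$ (columns left-to-right, top-to-bottom) versus $\Std_r$ (columns right-to-left, bottom-to-top) really do mirror the scanning directions used in $\stdl$ and $\stdr$, and that the only configurations allowed by Lemma~\ref{lemma:equal_symbols_in_tableau} are precisely the ones under which the matchings work. Everything else is a direct consequence of the lemmas already established.
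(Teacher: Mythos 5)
Your proposal is correct and follows essentially the same route as the paper: both proofs use Lemma~\ref{lemma:equivalent_insertions} to match shapes and underlying symbols, and Lemma~\ref{lemma:equal_symbols_in_tableau} to check that the reading order of $\Std_x$ reproduces the scanning order of $\std_x$, so the indices agree. The only cosmetic difference is that the paper establishes (2) directly from the equivalent-insertions lemma before tackling the index matching, whereas you deduce (2) from (1) via $\Dstd(\Std_x(B))=B$; both derivations are valid.
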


\begin{proof}
Let $w\in \aA^*$. By Lemma~\ref{lemma:equivalent_insertions}, the words $\stdl(w)$ and $w$ have equivalent $l$-insertions, that is, the $i$-th symbol of $\stdl(w)$ and the $i$-th symbol of $w$ are, respectively, in the same position of $\bB_\ell(\stdl(w))$ and of $\bB_\ell(w)$. Since
 elements in corresponding positions of $w$ and
$\stdl(w)$ have the same underlying symbol, and $\Dstd$
just erases the indexes of the symbols in the tableau, we deduce part (2) of the statement.

It remains to show that during the process of standardization of $\bB_\ell(w)$ the indexes will match, so that the symbols in corresponding positions of $\bB_\ell(\stdl(w))$ and $\Stdl(\bB_\ell(w))$ are equal.

Let $a\in\aA$ be a symbol in $w$ and let $k=|w|_a$. So $a_1\cdots a_k$ is a subsequence of $\stdl(w)$. By Lemma~\ref{lemma:equal_symbols_in_tableau} equal symbols in $w$, as we proceed from left to right in $w$, will be appearing in $\bB_\ell(w)$ as we proceed going column by column, from left to right, and on each column from top to bottom. Thus, by definition of $\Stdl$, the standardization of $\bB_\ell(w)$ will attach indexes in the same way the standardization of $w$ is processed.

The right-to-left case follows the same reasoning in the first two paragraphs of this proof. In the third paragraph, the main difference is that $a_k\cdots a_1$ is the subsequence of $\stdr(w)$ and that also Lemma~\ref{lemma:equal_symbols_in_tableau} guarantees that equal symbols in $w$, as we proceed from \emph{right to left} in $w$, will be appearing in $\bB_r(w)$ as we proceed going column by column, from \emph{right to left}, and on each column from  \emph{bottom to top}. This agrees with the standardization $\Stdr$ of $\bB_r(w)$.
\end{proof}

\subsection{From \ps\ tableaux to words}
\label{subsect1.2}

All the concepts introduced in the following paragraphs have an \lps\ and an \rps\ version depending on whether the considered \ps\ tableau is, respectively, \lps\ or \rps. So, for the sake of brevity we will only introduce the general notions and when necessary we distinguish them.

As we will see, it will be convenient to pass from diagrams to words
representing such diagrams. The first concept that we will need is the
generalization of reverse patience word presented in \cite{BL2005}. So, given a
\ps\ tableau $B$, the \emph{column reading of} $B$, denoted by $\cC(B)$, is the
word obtained from reading the entries of the \ps\ tableau $B$, column by
column, from the leftmost to the rightmost, starting on the top of  each column
and ending on its bottom. For example, the column reading of the \lps\ tableau
$B$ in (\ref{exmp1.1}) is $\cC(B)=2\,542\,632\,54$, whereas the column reading
of the \rps\ tableau $C$ in (\ref{exmp1.1}) is $\cC(C)=222\,543\,654$.

The following equality  follows from the definition of standardization of words and  tableaux: for any $x \in \{\ell,r\}$ and any $x$\ps\ tableau $B$,
\begin{equation}\label{eq:column_reading_and_standardization_commute}
\cC(\Std_x(B))  =   \std_x(\cC(B)).
\end{equation}

The relations established between Algorithm~\ref{alg:Bellword} for words, Algorithm 1.1 of \cite{BL2005} for permutations and the standardization processes, allows us to transfer results from the permutation case, such as
 \cite[Lemma~2.4]{BL2005},  to the general case of words.
\begin{lem}
	\label{propo1.5}
	For any $x \in \{\ell,r\}$ and word $w\in \aA^*$,
	\begin{equation*}
	\bB_x\Big(\cC\big(\bB_x(w)\big)\Big)=\bB_x(w).
	\end{equation*}
\end{lem}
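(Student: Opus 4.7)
The plan is to reduce the general case to the permutation case of \cite[Lemma~2.4]{BL2005} by using the standardization machinery established in Proposition~\ref{obs1.4} and identity~\eqref{eq:column_reading_and_standardization_commute}. Set $B=\bB_x(w)$; the goal is to show $\bB_x(\cC(B))=B$. Since $\std_x(w)$ is a standard word, and for standard words the $x$-insertion coincides with the permutation insertion of \cite{BL2005} by \eqref{exmp1.2}, the lemma of \cite{BL2005} gives
\begin{equation*}
\bB_x\bigl(\cC\bigl(\bB_x(\std_x(w))\bigr)\bigr)=\bB_x(\std_x(w)).
\end{equation*}

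Next I would rewrite both sides using the tools of Proposition~\ref{obs1.4}. On the right, part (1) of that proposition rewrites $\bB_x(\std_x(w))$ as $\Std_x(B)$. On the left, the same substitution inside the $\cC$ together with \eqref{eq:column_reading_and_standardization_commute} gives
\begin{equation*}
\cC\bigl(\bB_x(\std_x(w))\bigr)=\cC(\Std_x(B))=\std_x(\cC(B)).
\end{equation*}
Combining these two rewrites yields the intermediate identity $\bB_x(\std_x(\cC(B)))=\Std_x(B)$.

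To finish, apply the de-standardization $\Dstd$ to both sides. On the left, Proposition~\ref{obs1.4}(2) applied to the word $\cC(B)$ gives $\Dstd(\bB_x(\std_x(\cC(B))))=\bB_x(\cC(B))$. On the right, $\Dstd(\Std_x(B))=B$ directly from the definitions of $\Std_x$ and $\Dstd$ (standardization merely attaches indices, de-standardization erases them). Equating the two yields $\bB_x(\cC(B))=B$, as required.

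I do not expect any serious obstacle: the proof is a direct bookkeeping argument that lifts the permutation case through standardization, and every ingredient needed has already been established earlier in the section. The only point requiring a moment of care is verifying that \eqref{exmp1.2} legitimately lets one invoke \cite[Lemma~2.4]{BL2005} for the standard word $\std_x(w)$ (so that the \lps\ and \rps\ insertions agree with the original permutation version there), but this is immediate since $\std_x(w)$ has pairwise distinct entries in the totally ordered alphabet $\mathcal{C}$.
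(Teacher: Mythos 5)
Your proposal is correct and uses exactly the same ingredients as the paper's proof (Proposition~\ref{obs1.4}(1) and (2), identity~\eqref{eq:column_reading_and_standardization_commute}, and the reduction to \cite[Lemma~2.4]{BL2005} via~\eqref{exmp1.2}); the paper simply traverses the same chain of identities starting from $\bB_x(\cC(\bB_x(w)))$ rather than from the standard-word case. No substantive difference.
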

\begin{proof}
	It follows by Proposition~\ref{obs1.4}{(2)} that
	\[
          \bB_x\Big(\cC\big(\bB_x(w)\big)\Big) =\Dstd_x\Big(\bB_x\Big(\std_x\big(\cC\big(\bB_x(w)\big)\big)\Big)\Big).
        \]
By (\ref{eq:column_reading_and_standardization_commute}) and Proposition~\ref{obs1.4}{(1)} we get
\begin{align*}
\bB_x\Big(\cC\big(\bB_x(w)\big)\Big) & =\Dstd_x\Big(\bB_x\Big(\cC\big(\Std_x\big(\bB_x(w)\big)\big)\Big)\Big)\\
 & =\Dstd_x\Big(\bB_x\Big(\cC\big(\bB_x\big(\std_x(w)\big)\big)\Big)\Big).
\end{align*}

Now, by \cite[Lemma~2.4]{BL2005} the result holds for standard words, so
\[
  \bB_x\Big(\cC\big(\bB_x(w)\big)\Big) = \Dstd\Big(\bB_x\big(\std_x(w)\big)\Big).
  \]
	The result now follows by Proposition~\ref{obs1.4}{(2)}.
\end{proof}

Let $x \in \{\ell,r\}$. A word on $\aA^*$ that is the reading of a column of some $x$\ps\ tableau is called an $x$\ps\
\emph{co\-lumn word}. (In \cite{BL2005}, the term `pile' is used.) Similarly, a word on $\aA^*$ that is the reading of
the bottom row of some $x$\ps\ tableau is called a \ps\ \emph{bottom row word}.  We call a word an $x$\ps\
\emph{canonical word} if it is the column reading of some $x$\ps\ tableau. Note that \rps\ column words and \lps\ bottom
row words are weakly increasing, while \lps\ column words and \rps\ bottom row words are strictly increasing.

Notice that any $x$\ps\ canonical word $c$ has a unique decomposition into $x$\ps\ column words $c_1,\ldots,c_k$, such
that $c=c_1\cdots c_k$ and such that the subsequence obtained by taking the smallest symbols of each $c_i$ is a \ps\
bottom row word.

\begin{prop} \label{prop2.3}
The mapping $\bB_x$  from the set of canonical words to the set of \ps\  tableaux is a bijection, and $\cC$ is its inverse map.
\end{prop}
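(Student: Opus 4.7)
The plan is to establish the bijectivity of $\bB_x$ and $\cC$ by proving the two inverse identities: $\bB_x(\cC(B))=B$ for every $x$\ps\ tableau $B$, and $\cC(\bB_x(c))=c$ for every $x$\ps\ canonical word $c$. The codomains are right by construction: $\cC(B)$ is canonical by definition of canonical word, and $\bB_x$ always outputs an $x$\ps\ tableau by the remark after Algorithm~\ref{alg:Bellword}. Moreover, if the first identity is settled, the second follows formally: any canonical $c$ equals $\cC(B_0)$ for some tableau $B_0$, whence $\bB_x(c)=B_0$ and $\cC(\bB_x(c))=\cC(B_0)=c$. So the whole statement reduces to showing $\bB_x(\cC(B))=B$ for every $x$\ps\ tableau $B$. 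Observe that Lemma~\ref{propo1.5} already gives this on the image of $\bB_x$, so the task is equivalently to show that every $x$\ps\ tableau lies in the image of $\bB_x$, with $\cC(B)$ serving as an explicit witness.

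To prove $\bB_x(\cC(B))=B$, I would induct on the number $k$ of columns of $B$. The case $k=0$ is trivial: $\bB_x(\varepsilon)=\emptyset$. For the step, let $C_1,\ldots,C_k$ be the columns of $B$ from left to right, with top-to-bottom readings $c_1,\ldots,c_k$, and let $B^*$ be the tableau obtained from $B$ by deleting $C_k$; the defining monotonicity conditions carry over to $B^*$, which is itself an $x$\ps\ tableau with $k-1$ columns. By the inductive hypothesis $\bB_x(c_1\cdots c_{k-1})=B^*$, so $\cC(B)=\cC(B^*)\,c_k$ and it suffices to show that inserting $c_k=a_1\cdots a_h$ into $B^*$ rebuilds $C_k$ at its right. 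Denoting by $b_{k-1}$ the rightmost entry of the bottom row of $B^*$ (so $b_{k-1}\leq a_h$ in the $\ell$-case, $b_{k-1}<a_h$ in the $r$-case, by the bottom-row monotonicity of $B$), and using $a_h\leq a_1$, one checks that $a_1$ triggers case~(1) of Algorithm~\ref{alg:Bell} and opens a fresh rightmost column; then for each $j\geq 2$ the column monotonicity of $C_k$ plus the inequality $b_{k-1}<a_j$ (which holds in both variants) forces case~(2) with $z=a_{j-1}$, so $a_j$ drops to the bottom of this new column under $a_{j-1}$. After $h$ insertions the new column is exactly $C_k$, yielding $B$.

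The main point of care is the inductive step, where one has to run Algorithm~\ref{alg:Bell} on the prefix $c_k$ while juggling the strict/weak inequality conventions that distinguish the $\ell$ and $r$ variants; everything else is routine. Once this step is verified, the identity $\bB_x(\cC(B))=B$ is established for all $x$\ps\ tableaux, and combining with the reduction of the first paragraph (or directly with Lemma~\ref{propo1.5}) yields the proposition.
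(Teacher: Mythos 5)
Your proposal is correct and rests on the same key computation as the paper's proof: inserting the reading of a column whose minimum respects the bottom-row order of the preceding columns triggers case~(1) of Algorithm~\ref{alg:Bell} once and then case~(2) repeatedly in the freshly created column, so that column is rebuilt verbatim at the right --- the paper runs this forward over $c_1,\ldots,c_k$ to conclude $\cC(\bB_x(c))=c$ directly, while you package the very same step as an induction on the number of columns of the tableau to get $\bB_x(\cC(B))=B$ and then deduce the remaining identities formally. One small imprecision: in the $\ell$-case the inequality $b_{k-1}<a_j$ need not hold for $j=h$ (only $b_{k-1}\leq a_h$ is guaranteed, and equality does occur when adjacent columns share the same bottom entry); what your case~(2) step actually requires is that no bottom entry of an earlier column is \emph{strictly} greater than $a_j$, which is exactly the weak inequality you already have, so the argument still goes through.
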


\begin{proof}
  Let $c$ be a $x$\ps\ canonical word and let $c = c_1\cdots c_k$ be its column decomposition. Proceed with
  Algorithm~\ref{alg:Bellword} to compute $\bB_x(c)$. Note that $\bB_x(c_i)$ is a single column, for any
  $i = 1,\ldots,k$. Since the subsequence composed by the smallest symbols of the $c_i$ is a bottom row word, every symbol
  in $c_i$ is, in the \lps\ case, greater than or equal to, or, in the \rps\ case, greater than every symbol in
  $\bB_x(c_1\cdots c_{i-1})$, and so is inserted into a new column to the right of $\bB_x(c_1\cdots c_{i-1})$. Hence
  $\bB_x(c)$ is the juxtaposition of $\bB_x(c_1),\ldots,\bB_x(c_k)$. Thus $\bB_x$ is a bijective map on canonical words
  and $\cC\bigl(\bB_x(c)\bigr) = c$.
\end{proof}

As a consequence of the previous result, we can identify \ps\ tableaux with their canonical words. Throughout the
remainder of this paper we shall use this identification.

From any word $w\in \aA^*$ we can produce a canonical word $w_c$ by computing the tableau $\bB_x(w)$ and identifying it with the column reading $\cC(\bB_x(w))$. Let  $c_1,\ldots,c_k$ be the unique decomposition of $w_c$ into $x$\ps\ column words, such that $c=c_1\cdots c_k$ and   the subsequence obtained by taking the
smallest  symbols of each $x$\ps\ column word, is an $x$\ps\ bottom row word.
 We refer to the sequence of column
words  $(c_1,\ldots,c_k)$ as
the $x$\ps\  \emph{column configuration of} $w$. (In \cite{BL2005}, the term `pile configuration' is used.)

By Proposition~\ref{prop2.3} and Lemma~\ref{propo1.5} the mapping from $\aA^*$ to the set of $x$\ps\ canonical words given by $w\mapsto\cC(\bB_x(w))$, for any $w\in\aA^*$, is a surjection whose restriction to the set of canonical words is bijective.



%
%
%
%

\subsection{The left-to-right minimal
subsequence}
In this subsection we present the construction of sequences of subsequences of a
given a word, that are built going through the word from left to right.
These sequences are the generalization of the computation of the left-to-right minimal
subsequence for permutations in \cite[Definition~2.8]{BL2005}. We shall see
that they provide us an alternative way to compute directly the column
configuration of a word. This new approach will allow us to define \lps\ and \rps\ left insertion
algorithms on next subsection.

The algorithm is based on the computation of a strictly (resp., weakly)
decreasing subsequence from a given word $w\in \aA^*$, called its
\emph{$l$-decreasing} (resp., \emph{$r$-decreasing}) \emph{subsequence} and denoted
$\ldd(w)$ (resp., $\rdd$):
\begin{algorithm} (Decreasing subsequence)\label{alg:decreasing}
~\par\nobreak
\textit{Input:} A word $w\in \aA^*$.

\textit{Output:} Its $l$-decreasing ($r$-decreasing) subsequence $\ldd(w)$ (resp.
$\rdd$).

\textit{Method:}
\begin{enumerate}
	\item The first symbol of $\ldd(w)$ (resp., $\rdd(w)$) is the first symbol of $w$;
	\item The $i+1$-th symbol of $\ldd(w)$ (resp., $\rdd(w)$) is obtained from the $i$-th
symbol of $\ldd(w)$ (resp., $\rdd$), by going to the position that the $i$-th
symbol of $\ldd(w)$ (resp., $\rdd$) occupies in $w$ and finding the first symbol
to its right that is less than (resp., less than or equal to) it. It stops when no
symbol that is less than (resp., less than or equal to) the $i$-symbol is found and outputs the obtained
$l$-decreasing (resp., $r$-decreasing) subsequence.
\end{enumerate}
\end{algorithm}
The algorithm of left-to-right minimal subsequences is then defined in the following way:
\begin{algorithm} (Left-to-right minimal subsequence)\label{alg:lefttoright}
	~\par\nobreak
	\textit{Input:} A word $w\in \aA^*$.

	\textit{Output:} Its $l$-left-to-right ($r$-left-to-right) minimal
subsequence $\ldD(w)$ (resp., $\rdD(w)$).

	\textit{Method:}
	\begin{enumerate}
	\item Let $\ldd_1(w)$ (resp., $\rdd_1(w)$) be the left-to-right
minimal subsequence $\ldd(w)$ (resp., $\rdd(w)$).
\item For $i \geq 2$, let $w^\ell_{(i)}$ (resp., $w^r_{(i)}$) be the word obtained from $w$ by deleting the symbols
  appearing in $\ldd_1(w),\ldots,\ldd_{i-1}(w)$ (resp., $\rdd_1(w),\ldots,\rdd_{i-1}(w)$). Define $\ldd_i(w)$ (resp., $\rdd_i(w)$) to be $\ldd(w^\ell_{(i)})$ (resp., $\rdd(w^r_{(i)})$).
\item Output the sequence of non-empty words $\big(\ldd_1(w),\ldots,
\ldd_i(w)\big)$ (resp., $\big(\rdd_1(w),\ldots, \rdd_i(w)\big)$).
	\end{enumerate}
\end{algorithm}
For any $i\in \mathbb{N}$, the word $\ldd_i(w)$ (resp., $\rdd_i(w)$) computed by
the previous algorithm is defined as the $i$-th $l$-\emph{decreasing} (resp.
$r$-\emph{decreasing}) \emph{subsequence of} $w$, which is the generalization of
the $i$-th left-to-right minimal subsequence for permutations in
\cite{BL2005}.

So, for instance, considering the word $w=256423542$, we get:
\begin{alignat*}{2}
& \ldd_1(w)=\, \ldd(256423542)=2\ &&\ \rdd_1(w)=\ \rdd(256423542)=222\\
& \ldd_2(w)=\, \ldd(\cancel{2}56423542)=542\ &&\ \rdd_2(w)=\, \rdd(\cancel{2}564\cancel{2}354\cancel{2})=543\\
& \ldd_3(w)=\, \ldd(\cancel{2}\cancel{5}6\cancel{4}\cancel{2}3542)=632\ &&\ \rdd_3(w)=\, \rdd(\cancel{2}\cancel{5}6\cancel{4}\cancel{2}\cancel{3}54\cancel{2})=654\\
& \ldd_4(w)=\, \ldd(\cancel{2}\cancel{5}\cancel{6}\cancel{4}\cancel{2}\cancel{3}
54\cancel { 2 })=54 &&\ \rdd_4(w)=\, \rdd(\cancel{2}\cancel{5}\cancel{6}\cancel{4}\cancel{2}\cancel{3}\cancel{5}\cancel{4}\cancel{2}) =\varepsilon\\
&\ldd_5(w)=\, \ldd(\cancel{2}\cancel{5}6\cancel{4}\cancel{2}\cancel{3}\cancel{5}\cancel{4}\cancel{2})=\varepsilon
\end{alignat*}
therefore $\ldD(w)=(2,542,632,54)$ and $\rdD(w)=(222,543,654)$.

Whenever possible, we will use $\dD$ instead of $\ldD$ (resp., $\rdD$) and $\dd$ instead of $\ldd$ (resp., $\rdd$), in order to simplify the notation.


Similarly to the notion of equivalent $l$-insertions
previously defined, we say that two words $u,v$, possibly over different totally  ordered
alphabets, have \emph{equivalent $l$-left-to-right minimal subsequences}  if:
\begin{itemize}
\item both words have the same number $k$ of non-empty $l$-decreasing  subsequences; and,
\item for every $i=1,\ldots,k$, the positions of the symbols of the subsequence $\ldd_i(u)$ in $u$ are the same as the positions of the symbols of the subsequence $\ldd_i(v)$ in $v$.
\end{itemize}
The notion of \emph{equivalent $r$-left-to-right minimal subsequences} is defined similarly.

For example, the words $u=24131$ and $v=36142$ have equivalent $l$-left-to-right minimal subsequences, since $\ldD(u)=(21,431)$ and $\ldD(v)=(31,642)$.

\begin{lem}
For any word $w\in \aA^*$, the words $w$ and $\stdl(w)$ have equivalent
$l$--left-to-right minimal subsequences, and $w$ and $\stdr(w)$ have equivalent
$r$--left-to-right minimal subsequences.
\end{lem}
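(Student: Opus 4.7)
My approach is to observe that Algorithm~\ref{alg:decreasing} is a greedy procedure driven only by the order relation, and that left-to-right standardization turns the strict inequality ``$<$'' on $\aA$ into the strict inequality ``$\prec$'' on $\mathcal{C}$, while right-to-left standardization turns the weak inequality ``$\leq$'' on $\aA$ into ``$\prec$'' on $\mathcal{C}$. Under these translations the greedy algorithm selects the same positions in the word and in its standardization.

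The key ingredient is an order-preservation property: for any $w \in \aA^*$ and indices $p < q$, $\stdl(w)_p \succ \stdl(w)_q$ if and only if $w_p > w_q$, and $\stdr(w)_p \succ \stdr(w)_q$ if and only if $w_p \geq w_q$. Both equivalences are immediate from the definition of $\prec$ on $\mathcal{C}$: when the underlying symbols differ the comparison is inherited, and when they coincide the labelling direction decides, placing the strict inequality on the correct side in each case (left-to-right labelling gives the earlier position the smaller index, whereas right-to-left labelling gives it the larger index).

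From this I would prove by induction on $i$ that the $i$-th symbols of $\ldd(w)$ and $\ldd(\stdl(w))$ (respectively $\rdd(w)$ and $\rdd(\stdr(w))$) occupy the same position. The base case is trivial. For the inductive step, once a common position $p$ has been reached, Algorithm~\ref{alg:decreasing} seeks the leftmost $q > p$ with $w_q < w_p$ on one side and $\stdl(w)_q \prec \stdl(w)_p$ on the other; by the order-preservation property these two searches coincide. The $r$-case runs the same way with ``$\leq$'' on $\aA$ versus ``$\prec$'' on $\mathcal{C}$, noting that on a standardized word all entries have distinct indices so that $\preceq$ coincides with $\prec$ there. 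Since the termination conditions also agree, $\ldd(w)$ and $\ldd(\stdl(w))$ have equal length and occupy the same positions, and likewise in the $r$-case.

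To finish, I would iterate to cover the entire sequence produced by Algorithm~\ref{alg:lefttoright}. Once $\ldd_1(w)$ and $\ldd_1(\stdl(w))$ are known to occupy the same positions, the residual word $w^\ell_{(2)}$ and the subsequence of $\stdl(w)$ obtained by deleting those positions still satisfy the order-preservation property, because that property depends only on the relative left-to-right order of the surviving positions and their underlying symbols. Hence the greedy argument re-applies unchanged at each successive iteration, and both algorithms produce an empty decreasing subsequence at the same step. The only potentially subtle point is that the residual subsequences of $\stdl(w)$ or $\stdr(w)$ are not themselves standardizations of any word (the indices may now have gaps); but this is immaterial, since Algorithm~\ref{alg:decreasing} consults only the total order, which is intact.
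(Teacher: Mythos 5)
Your proof is correct, but it takes a genuinely different route from the paper's. The paper proves the lemma by induction on the length of $w$: it appends a symbol $a$ to $w$, lets $a'$ be the corresponding last symbol of $\std(wa)$, and tracks through the successive decreasing subsequences $\dd_1,\dd_2,\ldots$ to locate the unique one that $a$ (and, in parallel, $a'$) extends, using the standardization inequalities ($w_j\leq a\Rightarrow w'_j\prec a'$ for $\stdl$, etc.) at each comparison. You instead isolate a single position-wise order-preservation property --- for $p<q$, $\stdl(w)_p\succ\stdl(w)_q$ iff $w_p>w_q$, and $\stdr(w)_p\succ\stdr(w)_q$ iff $w_p\geq w_q$ --- and observe that Algorithm~\ref{alg:decreasing} is a greedy search that consults only such pairwise comparisons, so it selects identical positions in $w$ and in its standardization; you then note that the property is inherited by any subsequence of surviving positions, so the argument iterates through Algorithm~\ref{alg:lefttoright} unchanged. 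Your approach is arguably cleaner: it replaces the paper's case analysis with one reusable comparison lemma, it makes transparent exactly why the strict/weak inequality swap between the $\ell$- and $r$-cases works (all entries of a standardized word are distinct, so $\preceq$ collapses to $\prec$), and it correctly flags and dispatches the one subtle point, namely that the residual words after deletion are no longer standardizations of anything. What the paper's induction buys is uniformity with the proof of Lemma~\ref{lemma:equivalent_insertions}, which is structured the same way; your order-preservation lemma would in fact also streamline that proof.
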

\begin{proof}
We present the full proof for the $l$-case, and whenever the reasoning for the $l$-case and the $r$-case has some significant difference we present the arguments for the $r$-case in brackets.

The proof follows by induction on the length of $w$.
If $|w|=1$, the result holds trivially. Suppose by induction
hypothesis that the result holds for a word $w$ with $|w|=n\geq 1$. Let $a\in \aA$ and let $k$ be the number of non-empty $l$-decreasing subsequences of both $w$ and $\stdl(w)$.

We will prove that $wa$ and $\stdl(wa)$ have equivalent $l$-left-to-right minimal subsequences.
Suppose $w=w_1\cdots w_n$, with $w_1,\ldots, w_n\in \aA$.
Then $\stdl(w)=w'_1\cdots w'_n$,
with $w'_1,\ldots, w'_n\in \mathcal{C}$. Denote by  $a'$  the rightmost symbol of $\stdl(wa)$.

For $i=1$, suppose that $w_j$ is the last symbol of $\ldd_i(w)$
and let $w'_j$ be the corresponding symbol in $\stdl(w)$.
As by induction hypothesis $w$ and $\stdl(w)$  have equivalent $l$-left-to-right minimal subsequences, $w'_j$ is the last symbol of $\ldd_i(\stdl(w))$.

If $w_j\leq a$ (resp., $w_j<a$) then, by the left-to-right standardization (resp., right-to-left standardization) $w'_j<a'$. Thus $\ldd_i(wa)=\, \ldd_i(w)$  and $\ldd_i\big(\stdl(wa)\big)=\,\ldd_i\big(\stdl(w)\big)$.
Increase $i$ by one, and repeat the previous argument until $w_j> a$ (resp., $w_j\geq a$). If this case never occurs, then both words $wa$ and $\stdl(wa)$ have a new $l$-decreasing subsequence  $\ldd_{k+1}(wa)=a$ and $\ldd_{k+1}(\stdl(wk))=a'$ where its symbols have the same position on $w$ and $\stdl(wa)$, respectively.

Suppose we have reached an $i\in\{1,\ldots,k\}$ such that $w_j> a$ (resp., $w_j\geq a$). Then by the left-to-right standardization (resp., right-to-left standardization) $w'_j>a'$. So, $\ldd_i(wa)=\, \ldd_i(w)a$  and $\ldd_i\big(\stdl(wa)\big)=\,\ldd_i\big(\stdl(w)\big)a'$.

After this case $ a$ and $a'$ have been deleted in the computation of the left-to-right (resp., right-to-left) minimal subsequence, and so the computation of the remaining $l$-decreasing subsequences of $\ldD(wa)$ and $\ldD(\stdl(wa))$ proceeds in the same way as in the computation of $\ldD(w)$ and $\ldD(\stdl(w))$.

As $a$ and $a'$ occupy the same positions in $wa$ and $\stdl(wa)$, respectively and by the induction hypothesis $w$ and $\stdl(w)$ have equivalent  $l$-left-to-right minimal subsequences, we can conclude that $wa$ and $\stdl(wa)$ have equivalent $l$-left-to-right minimal subsequences.

By induction the result follows.
\end{proof}

 As by the previous result, for any $w\in \aA^*$, the words $w$ and $\stdl(w)$ (resp., $\stdr(w)$) have equivalent ($r$-) $l$-left-to-right minimal subsequences and standardization just adds indexes to the symbols of $w$ and $\dstd$ just erases them, we can conclude the following:

\begin{lem}
	\label{propo3.11}
	For any $w\in \aA^*$,  the de-standardization of the left-to-right minimal subsequence of $\std(w)$ is the left-to-right minimal subsequence of $w$.
\end{lem}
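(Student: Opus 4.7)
The plan is to derive this as an essentially immediate corollary of the preceding lemma (which establishes that $w$ and $\std(w)$ have equivalent left-to-right minimal subsequences) together with the basic observation about standardization/de-standardization recorded in Subsection~\ref{alphabetswords}: namely that $\std$ only attaches indexes to the underlying symbols of $w$, and $\dstd$ simply erases them, so that for every position $j$ the $j$-th symbol of $\std(w)$ has underlying symbol equal to the $j$-th symbol of $w$.

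First I would fix $w \in \aA^*$ and $x\in\{\ell,r\}$, and write $k$ for the common number of non-empty $x$-decreasing subsequences of $w$ and $\std_x(w)$ provided by the preceding lemma. Then for each $i\in\{1,\ldots,k\}$, the equivalence of left-to-right minimal subsequences says that there is a common sequence of positions $j_1 < j_2 < \cdots < j_{m_i}$ such that $\dd_i(w) = w_{j_1}w_{j_2}\cdots w_{j_{m_i}}$ and $\dd_i(\std_x(w)) = (\std_x(w))_{j_1}(\std_x(w))_{j_2}\cdots(\std_x(w))_{j_{m_i}}$. Applying $\dstd$ symbol by symbol to the latter and using that $\dstd$ just removes the indexes assigned by $\std_x$ yields exactly $w_{j_1}w_{j_2}\cdots w_{j_{m_i}} = \dd_i(w)$.

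Since this holds for every $i$ and both sequences have the same length $k$, we conclude (applying $\dstd$ componentwise to the tuple) that $\dstd(\dD(\std(w))) = \dD(w)$, as required. I do not expect any genuine obstacle here: the main substantive work was already carried out in the preceding lemma, and the argument reduces to unwinding definitions. The only care needed is to note explicitly that applying $\dstd$ to a left-to-right minimal subsequence means applying it to each of its components, and to observe that the positions in $w$ of the symbols of $\dd_i(w)$ coincide with the positions in $\std_x(w)$ of the symbols of $\dd_i(\std_x(w))$, which is precisely what equivalence of left-to-right minimal subsequences provides.
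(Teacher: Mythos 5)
Your proposal is correct and matches the paper's own treatment: the paper derives this lemma as an immediate consequence of the preceding lemma (equivalence of left-to-right minimal subsequences of $w$ and $\std(w)$) together with the observation that standardization only attaches indexes and $\dstd$ erases them. Your unwinding of the position-matching is just a more explicit write-up of the same one-line argument.
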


Proposition~\ref{prop2.3}  shows that \ps\ Algorithm (Algorithm~\ref{alg:Bellword}) can be viewed has a
computation of the column configuration associated to a word $w\in \aA^*$. The
following result, which is the generalization of \cite[Lemma 2.9]{BL2005}, shows that the new algorithm produces the same column configuration.

\begin{prop}
For any $w\in \aA^*$, the left-to-right minimal subsequence of $w$ produces the column configuration of $w$.
\end{prop}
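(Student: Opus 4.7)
The plan is to reduce the general statement to the case of permutations, which is exactly \cite[Lemma~2.9]{BL2005}, by passing through standardization. Fix $x\in\{\ell,r\}$ and let $w\in\aA^*$. I will work with the standardized word $\std_x(w)$, which is a permutation over the alphabet $\mathcal{C}$, apply the known permutation result to it, and then de-standardize.

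Concretely, I would proceed in three steps. First, by \cite[Lemma~2.9]{BL2005} applied to the standard word $\std_x(w)$, the $x$-left-to-right minimal subsequence $\dD_x(\std_x(w))$ is the $x$\ps\ column configuration of $\std_x(w)$; equivalently, concatenating the sequence yields $\cC(\bB_x(\std_x(w)))$, with the stated column decomposition. Second, by Proposition~\ref{obs1.4}(1) we have $\bB_x(\std_x(w)) = \Std_x(\bB_x(w))$, so the columns of $\bB_x(\std_x(w))$ read top-to-bottom are precisely the standardizations (via $\Std_x$) of the columns of $\bB_x(w)$. Third, by Lemma~\ref{propo3.11}, de-standardizing $\dD_x(\std_x(w))$ componentwise produces $\dD_x(w)$; on the other hand, de-standardizing $\Std_x(\bB_x(w))$ returns $\bB_x(w)$, and this de-standardization is compatible with the decomposition into columns. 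Putting these together, $\dD_x(w)$ equals the column configuration of $\bB_x(w)$, i.e.\ of $w$, which is what we want.

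The main obstacle is just a bookkeeping check: I have to verify that the column decomposition required in the definition of ``column configuration'' is preserved under de-standardization. Specifically, if $(c_1,\ldots,c_k)$ is the column decomposition of $\cC(\bB_x(\std_x(w)))$, characterised by the property that the sequence of the smallest symbols of the $c_i$'s forms an $x$\ps\ bottom row word, then after erasing the indices the resulting words $(\dstd(c_1),\ldots,\dstd(c_k))$ are still the column words of $\bB_x(w)$ and their bottoms still form an $x$\ps\ bottom row word. This follows because $\Dstd(\Std_x(\bB_x(w)))=\bB_x(w)$ box-by-box, so columns go to columns and bottom row to bottom row; the strict/weak monotonicity conditions for $x=\ell$ or $x=r$ are preserved because $\std_x$ was designed so that the strict order on $\mathcal{C}$ refines the (resp.\ weak/strict) relevant order on $\aA$.

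Modulo this routine check, the proof is a clean diagram chase: \cite[Lemma~2.9]{BL2005} in the middle, Proposition~\ref{obs1.4}(1) on the tableau side, and Lemma~\ref{propo3.11} on the subsequence side, linked by the surjection $w\mapsto\cC(\bB_x(w))$ from Proposition~\ref{prop2.3}.
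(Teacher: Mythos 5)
Your proof is correct and follows essentially the same route as the paper's: reduce to the permutation case via standardization, invoke \cite[Lemma~2.9]{BL2005} for the standard word, and then de-standardize using Proposition~\ref{obs1.4} on the tableau side and Lemma~\ref{propo3.11} on the subsequence side. The extra bookkeeping check you flag (that the column decomposition is preserved under de-standardization) is implicitly absorbed in the paper's appeal to Proposition~\ref{obs1.4}, so your write-up is, if anything, slightly more careful.
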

\begin{proof}
	As the argument applies to both $l$ and $r$-cases, we just provide the proof for the $l$-case.
	Let $w\in \aA^*$ and let $(c_1,\ldots, c_k)$ be the \lps\ column configuration of $w$. By Proposition~\ref{obs1.4} the \lps\ column configuration of $w$ can be obtained by computing the \lps\ column configuration of $std(w)$, and then de-standardizing the resulting words (removing its indexes).

	It is known \cite[Lemma~2.9]{BL2005}, that the column configuration of $std(w)$ corresponds to the left-to-right minimal subsequence of $std(w)$. Thus, the de-standardization of the left-to-right minimal subsequence of $std(w)$ is precisely $(c_1,\ldots, c_k)$. The result now follows since, by  the previous lemma,  the de-standardization of the left-to-right minimal subsequence of $std(w)$ is the left-to-right minimal subsequence of $w$.
\end{proof}

For $x \in \{\ell,r\}$, given a word $w\in \aA^*$, if $\dD_x(w)=(\dd^x_1(w),\ldots,\dd^x_k(w))$ and $(c_1,\ldots,c_j)$ is the column configuration of $w$, then by the previous proposition it follows that $k=j$ and $\dd^x_i(w)=c_i$, for all $i\in \{1,\ldots,k\}$, and thus $\cC(\bB_x(w))=\dd^x_1(w)\cdots \dd^x_k(w)$.

\subsection{Left-insertion}
\label{subsection1.3}

This section presents a new  algorithm
which performs  left insertion of symbols on \ps\ tableaux and thus allows us to build \ps\ tableaux from words, proceeding from left to right on the word,
which we will prove that has the same output as the Patience Sorting algorithm
(Algorithm~\ref{alg:Bellword}). These algorithms will be crucial in proving
biautomaticity for the \lps\ monoid in Section~\ref{sec4}.

The procedure makes use of the fact that the computation of the left-to-right
minimal subsequence produces the same result as \ps\ algorithm.

The following lemma gives us some of the tools to easily compute   decreasing subsequences, by left inserting symbols on a canonical word.

\begin{lem}\label{lem:initial_decreasing_sequence}
 Let  $c'\in \aA^*$ be a \ps\ column word, and let $w\in \aA^*$ be a canonical word
with column configuration $(c_1,\ldots, c_k)$. Then $\dd(c'w)=\dd(c'c_1)$ and
either:
 \begin{enumerate}
 \item $c'c_1$ is a \ps\ canonical word, with $\dd(c'c_1)=c'$ or $\dd(c'c_1)=c'c_1$,
and hence $c'c_1\cdots c_k$ is a \ps\ canonical word; or
 \item $c'c_1$ is not a \ps\ canonical word, and
$\dd(c'c_1)$ has the same minimum symbol as $c_1$, and the
word obtained from $c'c_1$ by erasing the symbols from $\dd(c'c_1)$
is the column word $\dd_2(c'c_1)$, that is a prefix of $c_1$.
\end{enumerate}
\end{lem}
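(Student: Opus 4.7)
My plan is to prove the equality $\dd(c'w)=\dd(c'c_1)$ first, and then separately handle the two cases by analysing the shape of $\dd(c'c_1)$. Throughout I would treat both the $\ell$-case and the $r$-case in parallel; the only real difference is that in the $\ell$-case column words are strictly decreasing and bottom rows are weakly increasing, while for the $r$-case the two inequalities swap.

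For the equality $\dd(c'w)=\dd(c'c_1)$, I would track the position of Algorithm~\ref{alg:decreasing} as it runs on $c'w$. Since $c'$ is a column word it is entirely decreasing, so $\dd$ picks every symbol of $c'$ and reaches $c'[|c'|]$. At that point two things can happen: either no symbol of $c_1$ is small enough to extend $\dd$ (so $\dd$ terminates at $c'[|c'|]$), or $\dd$ enters $c_1$ at some position, and because $c_1$ is itself a column word, from that position $\dd$ picks every subsequent symbol down to the bottom $c_1[|c_1|]$. In the first subcase, the failure to enter $c_1$ forces $c_1[|c_1|]$ not to be less than $c'[|c'|]$, and combined with the fact that $(c_1,\ldots,c_k)$ is a column configuration (so its bottom row is weakly/strictly increasing), every symbol of $c_2\cdots c_k$ lies above $c_i[|c_i|]\geq c_1[|c_1|]\geq c'[|c'|]$, hence cannot extend $\dd$. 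In the second subcase, $\dd$ ends at $c_1[|c_1|]$, and the bottom-row condition directly gives that every symbol of $c_2\cdots c_k$ is bounded below by $c_1[|c_1|]$, again preventing extension.

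For Case~1 I would use that the column decomposition of the canonical word $c'c_1$ is unique, and since $c'$ and $c_1$ are each already column words the only possibilities are $(c'c_1)$, where $c'c_1$ is itself a single column and $\dd$ picks everything (so $\dd(c'c_1)=c'c_1$), or $(c',c_1)$, where the bottom-row condition $c'[|c'|]\leq c_1[|c_1|]$ (resp.\ $<$) holds and $\dd$ never enters $c_1$ (so $\dd(c'c_1)=c'$). In either situation, appending $c_2,\ldots,c_k$ respects the bottom-row condition, so $c'c_1\cdots c_k$ is a canonical word.

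For Case~2, the failure of $c'c_1$ to be canonical is precisely the failure of both of the configurations in Case~1, which translates to the inequalities $c_1[|c_1|]<c'[|c'|]\leq c_1[1]$ (adjusted for the $r$-case). I would then let $j$ be the smallest index with $c_1[j]$ below $c'[|c'|]$ under the relevant order; the inequalities force $2\leq j\leq|c_1|$, and $\dd(c'c_1)=c'\cdot c_1[j]\cdots c_1[|c_1|]$, reaching the bottom of $c_1$. Its minimum is therefore $c_1[|c_1|]=\min c_1$. Erasing these symbols from $c'c_1$ leaves the prefix $c_1[1]\cdots c_1[j-1]$ of $c_1$; this prefix is a column word, and since a column word's decreasing subsequence is itself, this prefix is exactly $\dd_2(c'c_1)$. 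The main obstacle I anticipate is keeping the lPS/rPS bookkeeping clean, since every comparison swaps between strict and weak; I would mitigate this by fixing, once and for all, a symbol $\prec_x$ standing for the appropriate order and using it uniformly.
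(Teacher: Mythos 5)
Your proposal is correct and follows essentially the same route as the paper: track Algorithm~\ref{alg:decreasing} as it absorbs all of $c'$, then either stops or enters $c_1$ and runs to its bottom symbol, with the weakly (resp.\ strictly) increasing bottom row of $(c_1,\ldots,c_k)$ blocking any continuation into $c_2\cdots c_k$, and the leftover prefix of $c_1$ giving $\dd_2(c'c_1)$. The only cosmetic difference is that you derive the Case~1 dichotomy from uniqueness of the column decomposition of $c'c_1$, whereas the paper reads both subcases directly off the same computation of $\dd(c'c_1)$.
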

\begin{proof}
Note that the
minimum symbol in $w$ that is further left (resp., right), is the minimum symbol of $c_1$.

Following Algorithm~\ref{alg:decreasing}, to compute $\ldd(c'w)$ (resp., $\rdd(c'w)$) we obtain first all symbols from $c'$, since $c'$ is an \lps\ (resp., \rps) column word. If
the minimum symbol in $c'$ is less than or equal (resp., less than) the
minimum symbol of $c_1$, then $\ldd(c'w)=c'=\, \ldd(c'c_1)$ (resp., $\rdd(c'w)=c'=\, \rdd(c'c_1)$).

Otherwise, the algorithm proceeds
passing to symbols from $w$, it will search for the first (left to right)
symbol in $w$ that is less than (resp., less than or equal to) the minimum symbol in $c'$. So, the algorithm will find such element in the subsequence $c_1$. Indeed, let
$c_1=a_k \cdots a_1$, with $a_i\in\aA$, and
$s\in\{1,\ldots,k\}$ be the maximum index such that the minimum of $c'$ is greater than (resp., greater than or equal to) $a_s$.  (Note that $a_1$ is the minimum symbol in $w$ further left (resp., right).)
Then computing $\ldd(c'w)$ (resp., $\rdd(c'w)$) we get $c'a_s\cdots a_1$
which is equal to $\ldd(c'c_1)$ (resp., $\rdd(c'c_1)$). Now,
$\ldd_2(c'c_1)=a_k\cdots
a_{s+1}$ is a column word and a prefix of $c_1$. Note that when $s=k$, then $\ldd(c'w)=c'c_1=\, \ldd(c'c_1)$ (resp., $\rdd(c'w)=c'c_1=\, \rdd(c'c_1)$) and $\ldd_2(c'c_1)$ (resp., $\rdd_2(c'c_1)$) is the empty word.
\end{proof}

The following algorithm gives us a left insertion method of a symbol $a\in \aA$
into a \ps\ tableau $B$, producing a new tableau (more precisely a canonical word)
which we denote by $a\rightarrow B$. The algorithm proceeds from left to right
on the columns of $B$.

\begin{algorithm}[Left insertion of a symbol on a \ps\ tableau]\label{alg:left_insertion}
~\par\nobreak
\textit{Input:} A symbol $a\in\aA$ and an \lps\ (resp., \rps) tableau $B$.

\textit{Output:} The \lps\ (resp., \rps) tableau $a\rightarrow B$.

\textit{Method:} Let $c'_1$ denote the column $a$, and let $(c_1,\ldots, c_k)$ be
the \lps\ (resp., \rps) column configuration of $\cC(B)$.
For each $i=1,\ldots, k$ proceed as follows:
\begin{itemize}
	\item[Step $i$:] Denote by $d_{i}$
the \lps\ (\rps) decreasing subsequence $\ldd(c'_{i}c_i)$ (resp., $\rdd(c'_{i}c_i)$) and let
$c'_{i+1}$  be the prefix of $c_i$ that is obtained from $c'_{i}c_i$ deleting the symbols from $d_{i}$.
\end{itemize} Let $d_{k+1}=c'_{k+1}$ and output the  word $d_1\cdots d_kd_{k+1}$.
\end{algorithm}

One of the following two cases can occur during the computation of the $i$-th step of the previous
algorithm:
\begin{description}
 \item[Case A] $c'_{i}c_i$ is not a canonical word, so the
algorithm computes $d_i$ and  a non-empty column $c'_{i+1}$, and
the minimum symbol of $d_i$ is the minimum
symbol of $c_i$, by the previous lemma;
\item[Case B] $c'_ic_i$ is a canonical word, and so  two cases can occur
according to
the previous lemma:
\begin{enumerate}
 \item[(B1)]   $d_i=c'_ic_i$ is a single column word, and so
$c'_{i+1}$ is the empty word and the algorithm will compute $d_{j}=c_j$, for $
i<j\leq k$, and $d_{k+1}$ is the empty word; or,
\item[(B2)]  $d_i=c'_i$, and so
$c'_{i+1}=c_i$ and the algorithm computes
$d_{j+1}=c_{j}$, for $i\leq j\leq k$.
\end{enumerate}
\end{description}

Notice that once the algorithm reaches Case B  it maintains in that
case, and never returns to Case A in the next iterations. So one of the
following situations occurs in the execution of the algorithm:
\begin{description}
 \item[Situation (1)] If Case A occurs for all $i$, then the algorithm will
output a word
that has column configuration $(d_1,\ldots, d_k,d_{k+1})$;
 \item[Situation (2)] If  the algorithm first enters on Case B1 in the
$i$-th iteration, then
it will output a word with column configuration $(d_1,\ldots,d_{i-1},
d_i(=c'_ic_i), d_{i+1}(=c_{i+1}),\ldots, d_k(=c_k))$;
 \item[Situation (3)] As if the algorithm first
enters Case B2 in the $i$-th iteration, then it will output a word with
column configuration $(d_1,\ldots,d_{i-1},
d_i(=c'_i), d_{i+1}(=c_{i}),\ldots, d_{k+1}(=c_k))$.
\end{description}

\begin{lem}\label{lem:left_insertion}
 Algorithm~\ref{alg:left_insertion} outputs a canonical word. With the notation
used in the algorithm, the column configuration of the output is $(d_1,\ldots,
d_k,d_{k+1})$, with $d_{k+1}$ possibly empty.
\end{lem}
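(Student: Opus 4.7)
The plan is to verify directly that the output word $d_1 \cdots d_{k+1}$ satisfies the two defining properties of a canonical word, using Lemma~\ref{lem:initial_decreasing_sequence} as the workhorse at each iteration. Concretely, I need to show (i)~each $d_i$ is an \lps\ (resp.\ \rps) column word, and (ii)~the sequence of minima of the nonempty $d_i$'s is a \ps\ bottom row word, since these two properties together guarantee that $d_1\cdots d_{k+1}$ is the column decomposition of a canonical word.

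For (i), I would argue inductively on the iteration index. Initially $c'_1$ is the single-symbol column $a$. Suppose by induction that $c'_i$ is a column word (possibly empty after the algorithm has passed into Case~B1) and is in fact either $a$ or a top-prefix of $c_{i-1}$. Then $c'_i$ and $c_i$ are both column words, so Lemma~\ref{lem:initial_decreasing_sequence} applies to $c'_i c_i$: either $c'_ic_i$ is already canonical, in which case $d_i$ is $c'_i$ or $c'_ic_i$ (both column words), or it is not, in which case $d_i=\dd(c'_ic_i)$ is a strictly (resp.\ weakly) decreasing subsequence, hence again a column word, and the residue $c'_{i+1}$ is a top-prefix of $c_i$. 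Thus the induction propagates, and each $d_i$ produced by the algorithm is a column word of the correct type. The terminal $d_{k+1}=c'_{k+1}$ inherits this property.

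For (ii), I would track how $\min(d_i)$ compares with $\min(d_{i+1})$, distinguishing the situations listed after the algorithm. In Case~A, Lemma~\ref{lem:initial_decreasing_sequence} asserts $\min(d_i)=\min(c_i)$, so the minima of $d_1,\ldots,d_k$ inherit the bottom-row inequalities of $c_1,\ldots,c_k$; moreover the final residue $c'_{k+1}$ is a top-prefix of $c_k$, so its minimum strictly (resp.\ weakly) exceeds $\min(c_k)=\min(d_k)$, giving the correct relation to $d_{k+1}$. In Case~B1 at step~$i$, we have $d_i=c'_ic_i$ with $\min(d_i)=\min(c_i)$ and then $d_j=c_j$ for $j>i$, so the bottom-row order is exactly that of the $c_j$'s. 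In Case~B2 at step~$i$, we have $d_i=c'_i$ and $d_{j+1}=c_j$ for $j\ge i$; here $c'_i$ is a top-prefix of $c_{i-1}$ (or the initial column $a$), so $\min(d_i)\ge \min(c_{i-1})$ (resp.\ strictly), while $\min(d_{i+1})=\min(c_i)\ge\min(c_{i-1})$, and the precondition that led the algorithm into Case~B2 (the minimum of $c'_i$ being $\le$ resp.\ $<$ the minimum of $c_i$) is exactly what yields $\min(d_i)\le\min(d_{i+1})$ (resp.\ $<$). In every case the required bottom-row inequality propagates.

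Combining (i) and (ii), the word $d_1\cdots d_{k+1}$ is a canonical word whose column decomposition is exactly $(d_1,\ldots,d_{k+1})$, possibly with $d_{k+1}=\varepsilon$. The main obstacle, as suggested above, is the bookkeeping in part (ii): one must handle the \lps\ and \rps\ versions in parallel (the strict versus weak inequalities are inverted between columns and rows), and carefully treat the transition iteration in which Case~A switches into Case~B1 or B2, since that is precisely where the bottom-row comparison between $\min(d_i)$ and $\min(d_{i+1})$ ceases to be a direct copy of the original $c_i$-bottom-row comparison.
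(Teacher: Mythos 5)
Your proposal is correct and takes essentially the same route as the paper's proof: both rest on Lemma~\ref{lem:initial_decreasing_sequence} applied at each iteration and a case analysis over Situations (1)--(3) (Cases A, B1, B2), with the key points being that $\min(d_i)=\min(c_i)$ in Case A and that the transition into Case B2 is exactly the condition $\min(c'_i)\leq\min(c_i)$ (resp.\ $<$) needed for the bottom-row inequality. Your version merely makes the bottom-row bookkeeping more explicit than the paper's, which phrases the same facts as ``$d_ic'_{i+1}$ is a canonical word with column configuration $(d_i,c'_{i+1})$'' and then glues the pieces together.
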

\begin{proof}
Observe that on each iteration, the word $d_ic'_{i+1}$ is a canonical word with
column configuration $(d_i,c'_{i+1})$.

If Situation (2) occurs, then the minimum symbol of $d_i$ is the minimum symbol of
$c_i$, for $1\leq i\leq k$. Similarly for Situation (1) in which
$c'_{k+1}=d_{k+1}$, and so $d_kd_{k+1}$ is a canonical word. Therefore, the
algorithm outputs a canonical word.

If Situation (3) occurs, then $c'_i=d_i$ and thus using the observation on the beginning of the proof $d_{i-1}d_i$ is a canonical  word with column configuration $(d_{i-1},d_i)$. Similarly, $(d_i, d_{i+1})$ is the co\-lumn configuration of the canonical word $d_id_{i+1}$, since $c'_{i+1}=c_i$.
Since also $(d_1,\ldots, d_{i-1})$ (with the minimum symbol of $d_j$ equal to
the minimum symbol of $c_j$, for $1\leq j< i$, as in Case A) and
$(d_{i+1}(=c_{i}),\ldots, d_{k+1}(=c_k))$ are column configurations of the
corresponding words, we conclude that also in this situation the algorithm
outputs a canonical word.
\end{proof}

Below, we provide two examples of the execution of the left insertion algorithm for an \lps\ tableau:
\begin{figure}[ht]
	\centering
	\begin{align*}
	 &\ytableausetup
	{mathmode, boxsize=1.3em, aligntableaux=center}
	\left(\begin{ytableau}
	*(red!80!black)4
	\end{ytableau}\rightarrow\begin{ytableau}
	*(green!70!black)6 & \none & \none & 5 & \none \\
	*(green!70!black)4 & 8 & 7 & 4 & 8 \\
	*(gray!60!white)2 & 3 & 3 & 3 & 5 \\
	*(gray!60!white)1 & 1 & 1 & 2 & 3
	\end{ytableau}\right)=\begin{ytableau}
	4\\
	2\\
	1
	\end{ytableau}\cdot\left(\begin{ytableau}
	*(red!80!black)6\\
	*(red!80!black)4
	\end{ytableau}\rightarrow\begin{ytableau}
	\none & \none & 5 & \none \\
	*(green!70!black)8 & 7 & 4 & 8 \\
	*(gray!60!white)3 & 3 & 3 & 5 \\
	*(gray!60!white)1 & 1 & 2 & 3
	\end{ytableau}\right)\\
	&=\begin{ytableau}
	\none & 6\\
	4 & 4\\
	2 & 3\\
	1 & 1
	\end{ytableau}\cdot\left(\begin{ytableau}
	*(red!80!black)8
	\end{ytableau}\rightarrow\begin{ytableau}
	\none & 5 & \none \\
	*(gray!60!white)7 & 4 & 8 \\
	*(gray!60!white) 3 & 3 & 5 \\
	*(gray!60!white)1 & 2 & 3
	\end{ytableau}\right)=\begin{ytableau}
	\none & 6 & 8 & 5 & \none\\
	4 & 4 & 7 & 4 & 8 \\
	2 & 3 & 3 & 3 & 5 \\
	1 & 1 & 1 & 2 & 3
	\end{ytableau}
	\end{align*}
	\caption{Example of Situation (2).}
\end{figure}

\begin{figure}[ht]
	\centering
	\begin{align*}
	\ytableausetup
	{mathmode, boxsize=1.3em, aligntableaux=center}
	&\left(\begin{ytableau}
	*(red!80!black)2
	\end{ytableau}\rightarrow\begin{ytableau}
	*(green!70!black)6 & \none & \none & 5 & \none \\
	*(green!70!black)4 & 8 & 7 & 4 & 8 \\
	*(green!70!black)2 & 3 & 3 & 3 & 5 \\
	*(gray!60!white)1 & 1 & 1 & 2 & 3
	\end{ytableau}\right)=\begin{ytableau}
	2\\
	1
	\end{ytableau}\cdot\left(\begin{ytableau}
	*(red!80!black)6\\
	*(red!80!black)4\\
	*(red!80!black)2
	\end{ytableau}\rightarrow\begin{ytableau}
	\none & \none & 5 & \none \\
	*(green!70!black)8 & 7 & 4 & 8 \\
	*(green!70!black)3 & 3 & 3 & 5 \\
	*(gray!60!white)1 & 1 & 2 & 3
	\end{ytableau}\right)\\
	&=\begin{ytableau}
	\none & 6\\
	\none & 4\\
	2 & 2\\
	1 & 1
	\end{ytableau}\cdot\left(\begin{ytableau}
	*(red!80!black)8\\
	*(red!80!black)3
	\end{ytableau}\rightarrow\begin{ytableau}
	\none & 5 & \none \\
	*(green!70!black)7 & 4 & 8 \\
	*(green!70!black) 3 & 3 & 5 \\
	*(gray!60!white)1 & 2 & 3
	\end{ytableau}\right)=\begin{ytableau}
	\none & 6 & \none\\
	\none & 4 & 8\\
	2 & 2 & 3\\
	1 & 1 & 1
	\end{ytableau}\cdot\left(\begin{ytableau}
	*(red!80!black)7\\
	*(red!80!black)3
	\end{ytableau}\rightarrow\begin{ytableau}
	*(green!70!black)5 & \none \\
	*(green!70!black)4 & 8 \\
	*(green!70!black)3 & 5 \\
	*(gray!60!white)2 & 3
	\end{ytableau}\right)\\
	&=\begin{ytableau}
	\none & 6 & \none & \none\\
	\none & 4 & 8 & 7\\
	2 & 2 & 3 & 3\\
	1 & 1 & 1 & 2
	\end{ytableau}\cdot\left(\begin{ytableau}
	*(red!80!black)5\\
	*(red!80!black)4\\
	*(red!80!black)3
	\end{ytableau}\rightarrow\begin{ytableau}
	*(green!70!black)8 \\
	*(green!70!black)5 \\
	*(green!70!black)3
	\end{ytableau}\right)=\begin{ytableau}
	\none & 6 & \none & \none & \none & \none\\
	\none & 4 & 8 & 7 & 5 & 8\\
	2 & 2 & 3 & 3 & 4 & 5\\
	1 & 1 & 1 & 2 & 3 & 3
	\end{ytableau}
	\end{align*}
	\caption{Example of Situation (3).}
\end{figure}

\begin{lem}\label{lem:left_bell_insertion}
 For any word $w\in \aA^*$ and $a\in \aA$, we have
 \[\big(a\rightarrow \bB(w)\big) = \bB(aw).\]
\end{lem}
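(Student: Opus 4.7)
My plan is to show that the canonical words associated (via Proposition~\ref{prop2.3}) to $\bB(aw)$ and $a\rightarrow\bB(w)$ coincide. By Lemma~\ref{lem:left_insertion}, $a\rightarrow\bB(w)$ is the canonical word with column configuration $(d_1,\ldots,d_{k+1})$ (with $d_{k+1}$ possibly empty) produced by Algorithm~\ref{alg:left_insertion} from the column configuration $(c_1,\ldots,c_k)$ of $\bB(w)$. By the proposition identifying the column configuration of any word with its left-to-right minimal subsequence, $\bB(aw)$ corresponds to the canonical word whose column configuration is $\dD(aw)$. Thus it suffices to prove $\dd_i(aw)=d_i$ for every $i\in\{1,\ldots,k+1\}$, which I would show by induction on $i$.

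For the base case $i=1$, the proof of Lemma~\ref{lem:initial_decreasing_sequence} uses only the fact that $c_1=\dd(w)$, so it applies to an arbitrary word $w$ rather than only to canonical ones; applied with $c'=a$, it yields $\dd_1(aw)=\dd(aw)=\dd(ac_1)=d_1$, the last equality holding by the definition $c'_1=a$ in Algorithm~\ref{alg:left_insertion}. For the inductive step, let $u_i$ denote the word obtained from $aw$ by deleting the symbols of $d_1,\ldots,d_{i-1}$, so $\dd_i(aw)=\dd(u_i)$; the target is $\dd(u_i)=\dd(c'_ic_i)=d_i$.

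The main obstacle is that $u_i$ is in general not equal, as a word, to the canonical word $c'_ic_i\cdots c_k$: the positions in $w$ of symbols from the later columns $c_{i+1},\ldots,c_k$ may interleave with those of $c'_i$ and $c_i$. To show that the two first decreasing subsequences coincide nonetheless, I would argue positionally. The greedy procedure computing $\dd(u_i)$ first traverses all of $c'_i$ in its column-reading order, which coincides with increasing $w$-position order since $c'_i$ is a prefix of the greedy subsequence $c_{i-1}$; between consecutive such positions, every surviving symbol in $u_i$ lies weakly above the running threshold by the greedy definition of $c_{i-1}$, so it is skipped. After the last symbol of $c'_i$, $\dd(u_i)$ picks up exactly the same suffix of $c_i$ as does $\dd(c'_ic_i)$, by an analogous threshold argument applied to intervening symbols from the later columns: any such symbol below the threshold would have been captured at an earlier stage, contradicting its membership in a later column. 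Iterating this bookkeeping column by column essentially amounts to applying Lemma~\ref{lem:initial_decreasing_sequence} inductively to the successive $u_i$; the edge case $i=k+1$, when $d_{k+1}=c'_{k+1}$ is nonempty, is immediate because $u_{k+1}$ then consists precisely of the symbols of $c'_{k+1}$ arranged in their column-reading order in $w$.
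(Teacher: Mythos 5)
Your proof is correct and follows essentially the same route as the paper's: both reduce the claim to showing $\dd_i(aw)=d_i$ for each $i$ and then invoke Lemma~\ref{lem:initial_decreasing_sequence} column by column. You are in fact more careful than the paper at one point --- the paper's proof writes $\dd_i(aw)=\dd(c'_ic_i\cdots c_k)$, silently identifying the residual subword of $aw$ with the concatenation of column words (literally valid only when $w$ is already canonical), whereas your positional/threshold argument explicitly justifies why the interleaving of symbols from later columns cannot disturb the greedy computation of $\dd(u_i)$, which is a worthwhile addition.
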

\begin{proof}
Maintaining the notation used in Algorithm~\ref{alg:left_insertion}, with the
input $a$ and $B=\bB(w)$, the procedure outputs a canonical word with column
configuration $(d_1,\ldots,d_k,d_{k+1})$, by Lemma~\ref{lem:left_insertion}.

Next we show that the $i$-th decreasing subsequence of $aw$ is the word $d_i$,
thus proving the result. The initial decreasing subsequence $\dd_1(aw)$ of $aw$ is
computed by taking $\dd(ac_1\cdots c_k)$. By
Lemma~\ref{lem:initial_decreasing_sequence}, $\dd_1(aw)=\dd(ac_1)$, thus equal
to $d_1$, since $d_1=\dd(c'_1c_1)$ and $a=c'_1$. With the notation used in
Algorithm~\ref{alg:left_insertion}, $c'_2$ is the subsequence of $ac_1$ from which
the symbols of  $\dd_1(aw)$ were erased. Thus to compute $\dd_2(aw)$ we compute
$\dd(c'_2c_2\cdots c_k)$. By
Lemma~\ref{lem:initial_decreasing_sequence}, $c'_2$ is a column word. Hence,
the
same reasoning can now be applied to compute $\dd_i(aw)$, knowing that
$\dd_j(aw)=d_j$, for $1\leq j<i$, and $\dd_i(aw)=\dd(c'_ic_i\cdots
c_k)=\dd(c'_ic_i)$, by Lemma~\ref{lem:initial_decreasing_sequence}, for $
i\leq k$.  Finally, if $c'_{k+1}$ is non empty, these are the non-erased
letters after the computation of ${\dd}_k(aw)$, and since it is a column word
we get $\dd_{k+1}(aw)=c'_{k+1}=d_{k+1}$.
\end{proof}

For $x \in \{\ell,r\}$, given $w=w_1\cdots w_n\in\aA^*$, with $w_1\ldots,w_n\in \aA$, the corresponding $x$\ps\ tableau
can be computed using right-algorithm by:
\[
	\bB_x(w)=\Big(\big((\emptyset \leftarrow w_1) \leftarrow w_2 \big)\ldots
	\Big)\leftarrow w_n.
\]
The previous lemma provides us an alternative way:
\[
  \bB_x(w)=w_1\rightarrow\Big(\ldots\big(w_{n-1}\rightarrow(w_n\rightarrow\emptyset)\big)\Big).
\]
More formally, the algorithm is defined as follows:

\begin{algorithm}[Left \ps\ algorithm for words]\label{alg:left_insertionword}
	~\par\nobreak
	\textit{Input:} A word $w=w_1\cdots w_n\in \aA^*$, with $w_1,\ldots,w_n\in \aA$.

	\textit{Output:} An \lps\ (resp., \rps) tableau $\bB_\ell(w)$ (resp., $\bB_r(w)$).

	\textit{Method:}
	Start with the empty tableau $P_0=\emptyset$. For each $i=1,\ldots,k$, insert $w_{n-k+1}$ into the \lps\ (resp., \rps) tableau $P_{i-1}$ as per Algorithm~\ref{alg:left_insertion}. Output $P_k$ for $\bB_\ell(w)$ (resp., $\bB_r(w)$).
\end{algorithm}

\subsection{Complexity of the algorithms}
Regarding the plactic monoid, it is known that  Schensted's algorithm has time complexity $O\left(n\log (n)\right)$ on the  length $n$ of the input word \cite{FREDMAN197529}.

In this subsection we study the complexity of the algorithms used in this paper to compute the column configuration of a
word, namely the Patience Sorting algorithm (Algorithm~\ref{alg:Bellword}), the left-to-right minimal subsequence
algorithm (Algorithm~\ref{alg:lefttoright}), and the Left \ps\ algorithm (Algorithm~\ref{alg:left_insertionword}).  (See
\cite{Sipser:1996:ITC:524279} for background on complexity.)

\begin{prop}
	The Patience Sorting algorithm, Algorithm~\ref{alg:Bellword}, has time complexity $O\left(n\log (n)\right)$,
	where $n$ is the length of the input word.
\end{prop}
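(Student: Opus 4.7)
The plan is to exhibit a data-structure implementation of Algorithm~\ref{alg:Bellword} in which each of the $n$ single-symbol insertions performed by Algorithm~\ref{alg:Bell} runs in $O(\log n)$ time, yielding the claimed total bound. The crucial structural observation is that the only information needed to decide whether step~(1) or step~(2) of Algorithm~\ref{alg:Bell} applies, and to locate the column of $z$ in step~(2), is the bottom row of the current tableau; moreover step~(2) replaces exactly one entry of the bottom row (namely $z$ by $a$) and leaves every other column untouched, the elements above $z$ merely being pushed one box upwards.

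I would therefore maintain two structures: a sorted array $B=(B[1],\ldots,B[m])$ storing the current bottom row (so $m\le n$), and, for each column, a list to whose bottom a new entry can be appended in $O(1)$ (a stack suffices). To insert $a$, binary-search $B$ for the leftmost index $j$ with $B[j]>a$ in the \lps\ case, or $B[j]\ge a$ in the \rps\ case; this costs $O(\log m)=O(\log n)$. If no such $j$ exists, create a new column containing only $a$ and append $a$ to $B$, in $O(1)$. Otherwise, append $a$ to the bottom of column $j$ and overwrite $B[j]\leftarrow a$, also in $O(1)$.

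For this to be correct one must check that $B$ remains monotone after each update, so that subsequent binary searches are still valid. In the \lps\ case the choice of $j$ gives $B[j-1]\le a$ and $B[j]>a$, and monotonicity of $B$ before the update gives $B[j+1]\ge B[j]>a$; after setting $B[j]=a$ one has $B[j-1]\le a=B[j]<B[j+1]$, so $B$ is still weakly increasing. The \rps\ case is analogous with strict inequalities throughout. This verification is routine but is the only place where the monotonicity hypotheses for step~(1) versus step~(2) really enter.

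Summing over the $n$ symbols, the total cost is $O(n\log n)$. The one point at which a naive implementation would fail is the apparent upward shift of the entire column in step~(2): carrying this out element-by-element could cost $\Theta(k)$ per insertion, where $k$ is the column height, and push the overall bound to $O(n^2)$. I expect this to be the main subtlety to flag; it is resolved by representing each column as a list appended to at the bottom, so the upward shift is purely conceptual and no data is actually moved. No lower bound is required, and the proof closes immediately once the per-step bound is established.
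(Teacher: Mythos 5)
Your proof is correct and follows essentially the same approach as the paper: binary search on the (sorted) bottom row gives $O(\log n)$ per insertion, and summing over the $n$ symbols yields $O(n\log n)$. You are in fact somewhat more careful than the paper, which does not explicitly address either the cost of the column shift in step~(2) or the preservation of monotonicity of the bottom row, both of which you handle correctly.
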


\begin{proof}
  Let $w\in \aA^*$, with $|w|=n$. Whenever we are inserting a symbol of $w$ using Algorithm~\ref{alg:Bell}, we can apply
  the binary search algorithm to the bottom row (which is sorted increasingly) to find the position in which this symbol
  is going to be inserted. There can be at most $n$ different
  symbols on the bottom row, and the time complexity of the binary search is thus $O\left(\log (n)\right)$ (cf. \cite[Ch. 2, Ex. 2.3-5]{MR2572804}
  and \cite{Flores:1971:ABS:362663.362752}). We apply this search $|w| = n$ times, so
  the time complexity for computing $\bB_\ell(w)$ (resp., $\bB_r(w)$) using Algorithm~\ref{alg:Bellword} is
  $O\left(n\log (n)\right)$.
\end{proof}

The idea behind the construction of Algorithm~\ref{alg:lefttoright} is the use of the decreasing subsequences algorithm (Algorithm~\ref{alg:decreasing}) in order to iteratively construct the \ps\ column words of the column configuration of $w$.


\begin{prop}
	The time complexity of Algorithm~\ref{alg:lefttoright} is
$O\left(n^2\right)$, where $n$ is the length of the input word.
\end{prop}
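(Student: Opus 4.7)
The plan is to decompose the running time of Algorithm~\ref{alg:lefttoright} into two nested contributions: (a) the number of iterations of its outer loop, and (b) the cost of each individual call to the decreasing-subsequence routine (Algorithm~\ref{alg:decreasing}). I expect a product of two $O(n)$ bounds, giving the desired $O(n^2)$ overall.

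First I would analyse the inner subroutine. Algorithm~\ref{alg:decreasing} can be implemented as a single left-to-right scan of its input word $u$: after it has placed a symbol at some position $p$, the search for the next member of $\dd(u)$ only examines positions strictly to the right of $p$, looking for the first symbol less than (resp., less than or equal to) the symbol at $p$, and then the process continues from that new position. No position of $u$ is ever revisited, so the total work is linear in $|u|$. Thus one call to Algorithm~\ref{alg:decreasing} on a word of length at most $n$ costs $O(n)$.

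Next I would bound the number of iterations of the outer loop. At each step $i$ the first remaining symbol of the residual word $w^\ell_{(i)}$ (resp., $w^r_{(i)}$) is always placed into $\dd_i(w)$, so every iteration removes at least one symbol. Since the initial length is $n$, the outer loop executes at most $n$ times before the residual becomes empty.

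Multiplying the two bounds yields $O(n^2)$. The only remaining subtlety, which I expect to be the main (though still routine) obstacle, is the bookkeeping needed to produce the residual word $w^\ell_{(i+1)}$ (resp., $w^r_{(i+1)}$) from the previous residual. Using a linked-list representation of the current residual together with pointers to the positions chosen during the scan, each deletion costs $O(1)$, so the total deletion work across all iterations is $O(n)$ and does not affect the bound; even a naive array rebuild at each iteration costs $O(n)$ per iteration and hence $O(n^2)$ in total, which is still within the claimed complexity. This completes the plan.
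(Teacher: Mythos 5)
Your proposal is correct and takes essentially the same approach as the paper: both arguments reduce to counting comparisons, with the paper summing $(n-1)+(n-2)+\cdots+1$ over the successive placements while you bound the same work as (at most $n$ outer iterations) times ($O(n)$ per call to the decreasing-subsequence scan). Your explicit treatment of the residual-word bookkeeping is a harmless extra detail that the paper leaves implicit.
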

\begin{proof}
	The decreasing subsequence construction is based on a left to right search on the
word comparing symbols to find the next symbol less than (resp., less than or equal on the $r$-case) than the previous one. Given a word $w\in \aA^*$ of length $n$, on the first step the algorithm chooses the leftmost symbol of $w$, as for the second step it can do, in the worst-case scenario, $n-1$ comparisons. In the worst-case scenario, on the $i$-th step the algorithm can perform $n-i+1$ comparisons.
Therefore, the time complexity of the algorithm is
	\begin{equation*}
	O\left((n-1)+(n-2)+\cdots +1\right)=O\left(\frac{n(n-1)}{2}\right)=O\left(n^2\right).\qedhere
	\end{equation*}
\end{proof}
\begin{prop}
	Left \ps\ algorithm, Algorithm~\ref{alg:left_insertionword}, has time complexity $O\left(n^2\right)$, where $n$ is the length of the input word.
\end{prop}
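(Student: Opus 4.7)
The plan is to reduce the analysis to that of a single left-insertion step (Algorithm~\ref{alg:left_insertion}) and then sum over the $n$ insertions performed by Algorithm~\ref{alg:left_insertionword}. Write $w = w_1 \cdots w_n$ and recall that
\[
  \bB_x(w) = w_1 \rightarrow \Bigl(\cdots \bigl(w_{n-1} \rightarrow (w_n \rightarrow \emptyset)\bigr)\Bigr),
\]
so the outer algorithm performs exactly $n$ calls to Algorithm~\ref{alg:left_insertion}, on tableaux of size at most $n$. It therefore suffices to show that each single call takes $O(n)$ time.

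Fix such a call, with input symbol $a$ and tableau $B$ having column configuration $(c_1,\ldots,c_k)$ and total size $N \leq n$. The algorithm iterates through $i=1,\ldots,k$, at each step computing $d_i = \dd(c'_i c_i)$ and the residue $c'_{i+1}$. By Lemma~\ref{lem:initial_decreasing_sequence}, each such computation amounts to comparing the minimum symbol of $c'_i$ (the bottom symbol of the column word $c'_i$) against the symbols of $c_i$ from bottom to top, to locate the index $s$ at which the comparison first fails. Since $c_i$ is sorted, this takes time $O(|c_i|)$ with a linear scan (or $O(\log |c_i|)$ via binary search, which is not needed here), and producing the output words $d_i$ and $c'_{i+1}$ can be done in time $O(|c'_i| + |c_i|)$.

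The key bookkeeping is that $c'_{i+1}$ is always a prefix of $c_i$ (Case A of Algorithm~\ref{alg:left_insertion}), or else equals $c_i$ itself or is empty (Cases B1, B2), so $|c'_{i+1}| \leq |c_i|$ for every $i$, while $|c'_1| = 1$. Hence
\[
  \sum_{i=1}^{k} \bigl(|c'_i| + |c_i|\bigr) \;\leq\; 1 + 2\sum_{i=1}^{k} |c_i| \;=\; 1 + 2N \;=\; O(n),
\]
so a single application of Algorithm~\ref{alg:left_insertion} runs in time $O(n)$. Multiplying by the $n$ iterations of Algorithm~\ref{alg:left_insertionword} gives the claimed bound $O(n^2)$.

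The main subtlety, and really the only thing that needs care, is the bound $|c'_{i+1}| \leq |c_i|$: this is what prevents the residue columns $c'_i$ from accumulating and blowing up the per-insertion cost. Once this is in hand, the complexity analysis is immediate from Lemma~\ref{lem:initial_decreasing_sequence} and a telescoping sum over the columns of $B$.
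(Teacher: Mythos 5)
Your proposal is correct and follows essentially the same route as the paper's proof: both reduce to bounding a single call of Algorithm~\ref{alg:left_insertion} by a linear scan of each $c'_ic_i$, use $|c'_1|=1$ and $|c'_{i+1}|\leq|c_i|$ to telescope the per-insertion cost to $O(s)$ (the paper's bound $2s+1$ versus your $1+2N$), and then sum over the $n$ insertions to get $O(n^2)$.
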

\begin{proof}
Given a word of length $n$,
Algorithm~\ref{alg:left_insertionword} executes Algorithm~\ref{alg:left_insertion} $n$ times.

	Let $w=w_n\cdots w_1\in \aA^*$. We shall now analyse the time complexity of Algorithm~\ref{alg:left_insertion} when inserting a symbol $w_{s+1}$ into the tableau $\bB(w_{s}\cdots w_1)$. Let $(c_1,\ldots, c_{k_s})$ be the column configuration of $\cC(\bB(w_{s}\cdots w_1))$ and let $\Sh(\bB(w_s\cdots w_1))=(j_1,\ldots, j_{k_s})$, where $j_1+\cdots +j_{k_s}=s$.

	To execute Algorithm~\ref{alg:left_insertion} we need only to compute the decreasing subsequences $\dd(c'_ic_i)$, $k_s$ times. (We keep the notation used in the algorithm.) Note that $c'_i$ and $c_i$ are column words. As such we can compute $\dd(c_i'c_i)$ and $c'_{i+1}$ scanning through the word $c_i'c_i$ from left to right only once. Recall that $c'_{i+1}$ is a prefix of $c_i$. So reading the word $c'_ic_i$ we will find first the symbols from $c'_i$, and if we find either an increase or an equal symbol (an increase on the $r$ case), it means that we reached the first symbol of $c'_{i+1}$; keeping in  memory the rightmost symbol, say $a$, of $c'_i$ we can proceed through the word reading now symbols from $c'_{i+1}$; the word $c'_{i+1}$  finishes right before  we find the first symbol less (less or equal on the $r$ case) than $a$. Having identified $c'_{i+1}$ we also get  $\dd(c_i'c_i)$.

	Since $|c'_1|=1$ and $|c'_{i+1}|\leq |c_i|= j_i$, we deduce that Algorithm~\ref{alg:left_insertion}, needs to do at most $|c'_1|+ |c'_2\cdots c'_{{k_s}+1}| + |c_1\cdots c_{k_s}|\leq 2s+1$ comparisons to execute the insertion of symbol $w_{s+1}$ into the tableau $\bB(w_{s}\cdots w_1)$.  Therefore, it has time complexity $O(s)$.
Hence, Algorithm~\ref{alg:left_insertionword} has time complexity
	$O\left(1+\cdots+n\right)=O\left(n^2\right)$.
\end{proof}

\subsection{\ps\ congruences}\label{subsec:Bell_congruence}

In Subsection~\ref{section1.1}, we defined \lps\ and \rps\ tableaux. In each case we can identify words that lead to the
same \ps\ tableau, which yields, as will be shown, congruences on $\aA^*$.  We then give sets of relations that
generate each congruence.

Define relations $\bellcong$ and $\belrcong$ by
\begin{align*}
u\bellcong v &\iff \bB_\ell(u)=\bB_\ell(v), \\
u\belrcong v &\iff \bB_r(u)=\bB_r(v),
\end{align*}
for $u,v\in \aA^*$. By $\bB_\ell(u)=\bB_\ell(v)$.

By \cite[Theorem~3.1]{Maxime07}, the relation $\bellcong$ is a congruence. We present a short proof that both
$\bellcong$ and $\belrcong$ are congruences using Algorithm~\ref{alg:Bellword} and Lemma~\ref{lem:left_bell_insertion}.

\begin{prop}\label{prop115}
  The relations $\bellcong$ and $\belrcong$ are congruences on $\aA^*$.
\end{prop}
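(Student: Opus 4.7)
The plan is to verify the two standard conditions: each relation is an equivalence relation, and each is compatible with multiplication on both sides. Reflexivity, symmetry, and transitivity are immediate, since $\bellcong$ and $\belrcong$ are defined as the kernels of the maps $u \mapsto \bB_\ell(u)$ and $u \mapsto \bB_r(u)$ respectively. So the real content is two-sided compatibility.

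For right-compatibility, I would exploit Algorithm~\ref{alg:Bellword} directly. If $u \bellcong v$ and $w = w_1\cdots w_k \in \aA^*$, then
\[
  \bB_\ell(uw) = \Big(\cdots\big(\bB_\ell(u) \leftarrow w_1\big)\cdots\Big)\leftarrow w_k,
\]
and the analogous equality holds for $v$. Since $\bB_\ell(u) = \bB_\ell(v)$, iteratively applying right-insertion produces the same tableau, so $uw \bellcong vw$. The same argument works with $\bB_r$ in place of $\bB_\ell$.

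For left-compatibility I would use Lemma~\ref{lem:left_bell_insertion}, which gives $\bB(aw) = a \rightarrow \bB(w)$ for any letter $a$ and any word $w$. Proceed by induction on $|w|$, where we want to show $wu \bellcong wv$ whenever $u \bellcong v$. The base case $w = \varepsilon$ is trivial. For the inductive step write $w = w' a$ with $a \in \aA$; then by Lemma~\ref{lem:left_bell_insertion},
\[
  \bB_\ell(aw'u) = a \rightarrow \bB_\ell(w'u) = a \rightarrow \bB_\ell(w'v) = \bB_\ell(aw'v),
\]
where the middle equality is the inductive hypothesis. (A slicker variant: induct on the length of the prefix being appended one letter at a time on the left, using only that $a \rightarrow -$ is a well-defined function of the tableau.) The same argument, reading Lemma~\ref{lem:left_bell_insertion} in its \rps-version, handles $\belrcong$.

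No step is really an obstacle here; the work was done in establishing Lemma~\ref{lem:left_bell_insertion}, which converts left-concatenation of a letter into a function of the tableau alone. Once that is in hand, both compatibilities reduce to a one-line induction, so the proof is short.
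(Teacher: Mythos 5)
Your proof is correct and follows essentially the same route as the paper: right-compatibility via iterated right-insertion (Algorithm~\ref{alg:Bellword}) and left-compatibility via Lemma~\ref{lem:left_bell_insertion}, reduced to single letters by induction on length. The only blemish is a notational slip in the inductive step, where you write $w = w'a$ but then compute $\bB_\ell(aw'u)$; the decomposition you actually use is $w = aw'$, peeling the leftmost letter, which is what the left-insertion lemma requires.
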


\begin{proof}
Let $x \in \{\ell,r\}$ and let $\belxcong$ denote the corresponding relation $\bellcong$ or $\belrcong$.
Let $u,v\in \aA^*$ be such that $u\belxcong v$. We will prove that
$ua\belxcong va$ and that $au\belxcong av$, for any $a\in \aA$. Then the result
follows
immediately by an inductive argument on the length of a word
$w=\in\aA^*$.

Let $a\in \aA$. Since $\bB_x(u)=\bB_x(v)$, by Algorithm~\ref{alg:Bellword} we get
\[\bB_x(ua) = \big(\bB_x(u)\leftarrow a\big)= \big(\bB_x(v)\leftarrow a\big)= \bB_x(va)\]
and hence $ua\belxcong va$.

Now, by Lemma~\ref{lem:left_bell_insertion} we obtain
\[\bB_x(au)= \big(a\rightarrow\bB_x(u)\big) = \big(a\rightarrow\bB_x(v)\big) =\bB_x(av)\]
and hence $au\belxcong av$.
\end{proof}

Thus, we define the \lps\ monoid and the \rps\ monoid, denoted by $\bell$ and $\belr$, respectively, as the quotients of the free monoid $\aA^*$ over the congruences $\bellcong$ and $\belrcong$, respectively.
The \lps\ monoid is also known as the Bell monoid \cite{Maxime07}.

So, each \ps\ (\lps\ or \rps) tableau, and therefore each \ps\ canonical word, is a unique representative of a \ps\ class.

In the following paragraphs we present the analogue of Knuth's relations for these monoids.
Consider the following binary relations on $\aA^*$:
\begin{align*}
 \rR_\bell&=\{\,(yux,yxu): m\in \mathbb{N},\, x,y,u_1,\ldots , u_m\in
\aA, \\
 &\qquad   u=u_m\cdots u_1,\, x<y\leq
u_1< \cdots < u_m\,\}
\end{align*}
and
\begin{align*}
\rR_\belr&=\{\,(yux,yxu): m\in \mathbb{N},\, x,y,u_1,\ldots , u_m\in
\aA, \\
&\qquad  u=u_m\cdots u_1,\, x\leq y<
u_1\leq \cdots \leq u_m\,\}.
\end{align*}

In the remainder of this subsection we prove that $\bellcong$ (resp., $\belrcong$) is equal to
$\rR_\bell^{\#}$ (resp., $\rR_\belr^{\#}$), the smallest congruence relation containing $\rR_\bell$ (resp., $\rR_\belr$).
Let us begin with an auxiliary result.

\begin{lem}
  \label{prop19}
  For any $w\in \aA^*$, if $w_c$ is the \lps\ (resp., \rps) canonical word associated to $w$, then $(w, w_c) \in \rR_\bell^{\#}$ (resp., $(w, w_c) \in \rR_\belr^{\#}$).
\end{lem}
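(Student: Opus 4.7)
The plan is to prove the lemma by induction on $|w|$, treating the $\bell$ and $\belr$ cases in parallel. The base case $|w|=0$ is immediate since $w=\varepsilon=w_c$. For the inductive step, write $w=w'a$ with $a\in\aA$ and $|w'|=n$. By the inductive hypothesis applied to $w'$, we have $(w',w_c')\in\rR_\bell^\#$ (resp.\ $\rR_\belr^\#$), where $w_c'$ is the canonical word of $w'$. Since $\rR_\bell^\#$ is a congruence, this gives $(w,w_c'a)\in\rR_\bell^\#$. By Lemma~\ref{propo1.5}, $\bB(w_c'a)=\bB(wa)=\bB(w)$, so the canonical word of $w_c'a$ is again $w_c$. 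It therefore suffices to show $(w_c'a,w_c)\in\rR_\bell^\#$.

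To analyse $w_c'a$, decompose $w_c'=c_1\cdots c_k$ into its column words and let $b_i$ denote the last (bottom) letter of $c_i$, so that $b_1,\ldots,b_k$ is the bottom row of $\bB(w_c')$. Applying Algorithm~\ref{alg:Bell} to insert $a$ into $\bB(w_c')$, there are two possibilities. If $a$ starts a new column on the right (so $a\geq b_k$ in the $\bell$ case, or $a>b_k$ in the $\belr$ case), then $w_c=c_1\cdots c_k a=w_c'a$ and nothing needs to be shown. Otherwise, let $j$ be the smallest index with $b_j>a$ (resp., $b_j\geq a$); then $a$ is appended to the bottom of column $c_j$, so $w_c=c_1\cdots c_j\, a\, c_{j+1}\cdots c_k$. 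If in addition $j=k$, then once more $w_c=w_c'a$ and we are done; the substantive case is $j<k$, where $a$ must be ``slid'' leftwards, past the columns $c_k,c_{k-1},\ldots,c_{j+1}$, to its final position.

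The central step is to realise this sliding as a chain of applications of $\rR_\bell$ (resp.\ $\rR_\belr$). For $i$ running from $k$ down to $j+1$, I would apply the rewriting
\[
 b_{i-1}\,c_i\,a \;\longrightarrow\; b_{i-1}\,a\,c_i,
\]
in which $b_{i-1}$ (the last letter of $c_{i-1}$) plays the role of $y$, $c_i$ plays the role of $u=u_m\cdots u_1$, and $a$ plays the role of $x$. The column $c_i$ is strictly decreasing (in the $\bell$ case) or weakly decreasing (in the $\belr$ case), matching the required shape of $u$. The side conditions $x<y\leq u_1$ (resp., $x\leq y<u_1$) reduce to two monotonicity statements: the inequality $b_{i-1}\leq b_i$ (resp., $b_{i-1}<b_i$) is exactly monotonicity of the bottom row, and $a<b_{i-1}$ (resp., $a\leq b_{i-1}$) holds because $i-1\geq j$ together with the weakly (resp., strictly) increasing bottom row gives $b_{i-1}\geq b_j>a$ (resp., $b_{i-1}\geq b_j\geq a$) by the choice of $j$. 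Iterating these $k-j$ rewritings transforms $c_1\cdots c_k a$ into $c_1\cdots c_j\,a\,c_{j+1}\cdots c_k=w_c$.

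The main obstacle is the bookkeeping in this last paragraph: one must track the position of $a$ through the sweep and verify the side conditions at each of the $k-j$ rewriting steps. Conceptually, however, the argument is clean: the two defining monotonicity properties of a canonical word, namely that each column is strictly (resp., weakly) decreasing and that the bottom row is weakly (resp., strictly) increasing, are precisely what the side conditions defining $\rR_\bell$ and $\rR_\belr$ encode, so the relations were designed to make exactly this sliding step legal.
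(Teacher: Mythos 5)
Your proof is correct and takes essentially the same route as the paper's: induction on the length of $w$, reduction to the canonical word of the prefix via the congruence property, and realisation of the insertion of the final letter $a$ as a chain of $k-j$ applications of the defining relations $(yux,yxu)$ with $y$ the bottom letter of the preceding column, $u$ the column being passed, and $x=a$. The only cosmetic difference is that the paper identifies the target column through the left-to-right minimal subsequence $\ldD(wa)$ rather than by running Algorithm~\ref{alg:Bell} directly, and your indexing of the rewriting steps is in fact slightly cleaner than the paper's.
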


\begin{proof}
We give the detailed proof for the $\ell$-case and in parentheses give the main differences for the $r$-case.

We prove the result by induction on the length of $w$.
If $w$ is a symbol in $\aA$, the  result follows immediately since $w=w_c$.

Suppose that the result holds for any word of length $n$. Let $w\in \aA^*$ be a word of length $n$ and $a\in \aA$. Suppose
also that $\ldD(w)=(c_1,\ldots, c_k)$, and let $a_i$ be the minimum symbol in $c_i$, for $1\leq i\leq k$. Note that $a_i$ is the
rightmost symbol in the word $c_i$, and that $a_1\leq \cdots \leq a_k$ (resp., $a_1< \cdots < a_k$).

If $a_k\leq a$, then $\cC\big(\bB_\ell(wa)\big)=c_1\cdots c_ka=w_ca$ and the result follows trivially.
Otherwise, let $j\in \{1,\ldots,k\}$ be the smallest index such that $a_1\leq \cdots \leq
a_{j-1}\leq a < a_j \leq \cdots \leq a_k$ (resp., $a_1< \cdots < a_{j-1}< a \leq a_j < \cdots < a_k$). Then $\ldD(wa)=(c_1,\ldots , c_{j-1}, c_ja, c_{j+1},\ldots ,c_k)$.

Now, we have $(a_ic_{i+1}a
, a_iac_{i+1})\in \rR_\bell$, for $j\leq i\leq k$, since $a<a_i\leq a_{i+1}$ (resp., $a\leq a_i < a_{i+1}$) and $c_{i+1}$ is an \lps\ column word. By the induction hypothesis $w\ \rR_\bell^{\#}\ \cC\big(\bB_\ell(w)\big)$. As
\begin{align*}
wa\ &\rR_\bell^{\#}\  \cC\big(\bB_\ell(w)\big)a=c_1\cdots c_jc_{j+1}\dots c_{k-1}c_k a\\
     &\rR_\bell^{\#}\ c_1\cdots c_jc_{j+1}\cdots c_{k-1}ac_k\\
     & \cdots \\
     & \rR_\bell^{\#}\ c_1\dots c_jac_{j+1}\dots c_k=\cC\big(\bB_\ell(wa)\big),
\end{align*}
the result follows.
\end{proof}

\begin{thm}
	\label{thm119}
The relations $\bellcong$ and $\belrcong$ on $\aA^*$ are, respectively, the smallest congruences generated by the relations $\rR_\bell$ and $\rR_\belr$.
\end{thm}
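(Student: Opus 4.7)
My plan is to show the two inclusions separately for each case. I will write the argument uniformly by letting $x \in \{\ell,r\}$ and $\belxcong$ denote the corresponding relation, and $\rR_x$ the corresponding generating set.

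First I would verify the inclusion $\rR_x^{\#}\subseteq \belxcong$. Since $\belxcong$ is already known to be a congruence by Proposition~\ref{prop115}, it suffices to check that every pair $(yux, yxu)\in \rR_x$ satisfies $yux \belxcong yxu$. This is a direct computation using Algorithm~\ref{alg:Bellword}. Writing $u = u_m\cdots u_1$ with $y\leq u_1 < \cdots < u_m$ (resp.\ $y < u_1 \leq \cdots \leq u_m$), I would first compute $\bB_x(yu)$: inserting $u_m$ creates a new column to the right of $y$, and each subsequent $u_{m-i}$ (being smaller than $u_{m-i+1}$ but at least $u_1 \geq y$) bumps the previous top of the second column upwards. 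The result is a two-column tableau whose first column is $y$ and whose second column, read top-to-bottom, is $u_m,\ldots,u_1$. Inserting $x$ (which is smaller than $y$ and than $u_1$) bumps $y$ up in the first column. I would then compute $\bB_x(yxu)$: inserting $x$ after $y$ bumps $y$ in column $1$; then $u_m$ starts a new column (since $u_m$ exceeds $x$ and the bottom row has just $x$), and the $u_{m-i}$'s fill that column exactly as before. Both computations yield the same tableau, establishing $yux \belxcong yxu$.

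For the reverse inclusion $\belxcong \subseteq \rR_x^{\#}$, the work has already been done in Lemma~\ref{prop19}. Given $u \belxcong v$, they have the same \ps\ tableau, hence the same associated canonical word $w_c = \cC(\bB_x(u)) = \cC(\bB_x(v))$. By Lemma~\ref{prop19}, $(u, w_c) \in \rR_x^{\#}$ and $(v, w_c) \in \rR_x^{\#}$, and since $\rR_x^{\#}$ is a congruence (in particular an equivalence relation), transitivity and symmetry give $(u,v)\in \rR_x^{\#}$.

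Combining the two inclusions yields $\bellcong = \rR_\bell^{\#}$ and $\belrcong = \rR_\belr^{\#}$. The only nontrivial step is the first, and its main subtlety is ensuring that the comparisons used in Algorithm~\ref{alg:Bell} go the right way in both the $\ell$-case (strict column inequalities, weak bottom-row inequalities) and the $r$-case (weak column, strict bottom row); this is why the definitions of $\rR_\bell$ and $\rR_\belr$ use the matching strict/weak inequalities. No further computation is required beyond tracking the two insertion sequences described above.
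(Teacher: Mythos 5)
Your proposal is correct and follows essentially the same route as the paper: the harder inclusion $\belxcong\,\subseteq \rR_\bell^{\#}$ (resp.\ $\rR_\belr^{\#}$) is obtained exactly as in the paper, by passing through the common canonical word via Lemma~\ref{prop19} and using symmetry and transitivity. The only difference is in the easy inclusion, where you verify $yux\belxcong yxu$ by tracing Algorithm~\ref{alg:Bellword} directly on both words (a computation that does check out in both the $\ell$- and $r$-cases), whereas the paper gets this in one line by noting $\dD(yux)=(yx,u)=\dD(yxu)$ and invoking the fact that the left-to-right minimal subsequence produces the column configuration.
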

\begin{proof}
  Suppose $(w, w')\in \rR_\bell$. Then $w=yux$ and $w'=yxu$, for some \lps\ column word $u=u_m\cdots
  u_1$. 
  Because $\ldD(w)=(yx, u)=\ldD(w')$, it follows that $\bB_\ell(w)=\bB_\ell(w')$ and so $w\bellcong w'$. Therefore,
  $\rR_\bell^{\#}$ is contained in the congruence $\bellcong$.

Conversely, let $u,v \in \aA^*$ be such that $u\bellcong v$. By definition
$\bB_\ell(u)=\bB_\ell(v)$ and so $\cC\big(\bB_\ell(u)\big)=\cC\big(\bB_\ell(v)\big)$. Using Lemma~\ref{prop19}, we
conclude that $(u,v) \in \rR_\bell^{\#}$ by symmetry and transitivity of $\rR_\bell^{\#}$.

The \rps\ part follows by the same argument using the appropriate \rps\ definitions.
\end{proof}

The \ps\ monoids, $\bell$ and $\belr$, can now be described as the quotients of the free
monoid over the alphabet $\aA$ by the congruence generated by the relations
$\rR_\bell$ and $\rR_\belr$, respectively. So, we can conclude that $\bell$ and $\belr$ are defined by the presentations $(\aA,\rR_\bell)$ and $(\aA,\rR_\belr)$, res\-pec\-tively.

Since for any $a\in \aA$, $(u,v)\in \rR_\bell$ and $(u',v')\in \rR_\belr$, we have that $|u|_a=|v|_a$ and
$|u'|_a=|v'|_a$, the presentations $(\aA,\rR_\bell)$ and $(\aA,\rR_\belr)$ are multihomogeneous presentations and
therefore $\bell$ and $\belr$ are multihomogeneous monoids.

Throughout the text we identify words over $\aA$ with
elements of the monoids $\bell$ and $\belr$ that they represent.

\subsection{\ps\ monoids of finite rank}

Recall that  $\aA_n=\{1<\cdots <n\}$. The restriction of the relations
$\bellcong$ (resp., $\belrcong$) and $\rR_{\bell}$ (resp., $\rR_{\belr}$) to the alphabet $\aA_n$ yields the \lps\ (resp., \rps) \emph{congruence of rank $n$} denoted by ${\bellcongn}$ (resp., ${\belrcongn}$) and the set of $\bell$ (resp., $\belr$) relations $\rR_{\bell_n}$ (resp., $\rR_{\belr_n}$).

In a similar way to Theorem~\ref{thm119} we
conclude that ${\bellcongn}$ (resp., ${\belrcongn}$) is the smallest congruence relation on
$\aA_n^*$ generated by $\rR_{\bell_n}$ (resp., $\rR_{\belr_n}$).
We define the \lps\ (resp., \rps) \emph{monoid of rank} $n$, denoted by $\bell_n$ (resp., $\belr_n$), to be the quotient of the free monoid on the alphabet $\aA_n$ by the congruence ${\bellcongn}$ (resp., ${\belrcongn}$). The pair $(\aA_n ,\rR_{\bell_n})$ is  a presentation of the monoid
$\bell_n$, whereas $(\aA_n ,\rR_{\belr_n})$ is a presentation for $\belr_n$.

Since $\aA_n$ is finite, there are only finitely many words $u=u_m\cdots u_1\allowdisplaybreaks\in \aA_n^*$, with $u_1,\ldots,u_m\in \aA_n$ such that $u_1<\ldots <u_m$. As there are also finitely many possibilities for symbols $x,y\in \aA_n$, the set $\rR_{\bell_n}$ is finite. So the presentation $(\aA_n
,\rR_{\bell_n})$ is finite and we can conclude the following:
\begin{prop}
	For any $n\in \mathbb{N}$, the \lps\ monoid of rank $n$, $\bell_n$, is
finitely presented.
\end{prop}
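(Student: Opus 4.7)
The plan is to observe that the presentation $(\aA_n, \rR_{\bell_n})$ already displayed immediately before the proposition is finite; the proof then reduces to a short bounded-enumeration argument. That $\bell_n$ is presented by $(\aA_n, \rR_{\bell_n})$ is not something I need to re-prove, since it is asserted in the paragraph preceding the statement as a direct restriction of Theorem~\ref{thm119} to the subalphabet $\aA_n$. So all I have to do is check that both the alphabet and the relation set are finite.

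Finiteness of $\aA_n = \{1, \ldots, n\}$ is immediate from the definition. For the relation set, I would unpack the definition of $\rR_\bell$ restricted to $\aA_n$: each element of $\rR_{\bell_n}$ is determined by choosing $x, y \in \aA_n$ with $x < y$ together with a strictly increasing sequence $y \leq u_1 < u_2 < \cdots < u_m$ in $\aA_n$ (the word $u = u_m \cdots u_1$ being a strictly decreasing \lps\ column word). Each such sequence is nothing but a non-empty subset of $\{y, y+1, \ldots, n\} \subseteq \aA_n$, of which there are fewer than $2^n$; the pair $(x,y)$ has fewer than $n^2$ admissible values; so I obtain the crude bound $|\rR_{\bell_n}| < n^2 \cdot 2^n$, which is in particular finite.

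Combining these two finiteness observations yields that $(\aA_n, \rR_{\bell_n})$ is a finite presentation of $\bell_n$, establishing the proposition. There is no real obstacle here, since the heart of the argument — that strictly increasing sequences in a finite ordered set are finite in number — is already the content of the paragraph immediately preceding the statement; my contribution is just to make that observation explicit and combine it with the (already established) fact that $(\aA_n, \rR_{\bell_n})$ is a presentation.
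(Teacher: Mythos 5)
Your proposal is correct and follows essentially the same route as the paper: the paper likewise takes for granted that $(\aA_n,\rR_{\bell_n})$ presents $\bell_n$ (as the restriction of Theorem~\ref{thm119}) and then observes, in the paragraph immediately preceding the proposition, that there are only finitely many strictly increasing words $u$ over $\aA_n$ and finitely many choices of $x,y$, so $\rR_{\bell_n}$ is finite. Your explicit bound $n^2\cdot 2^n$ is a harmless refinement of the same counting argument.
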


Since there are infinitely many decreasing words $u=u_m\cdots u_1\in \aA_n^*$, with $u_1,\ldots,\allowbreak u_m\in \aA_n$ such that $u_1\leq\cdots \leq u_m$, the set $\rR_{\belr_n}$ is infinite and thus the presentation $(\aA_n ,\rR_{\belr_n})$ is infinite.


\begin{prop}
  For any $n\in \mathbb{N}$ with $n\geq 2$, the monoid $\belr_n$ is not finitely presented.
\end{prop}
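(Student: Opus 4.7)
My plan is to proceed by contradiction, exhibiting an infinite family of congruent word pairs that no finite set of defining relations can possibly derive.

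\emph{Reduction to the alphabet $\aA_n$.} Suppose $\belr_n$ is finitely presented. Since $\aA_n$ is a finite generating set, a standard Tietze transformation argument yields a finite presentation of the form $(\aA_n,\mathcal{S})$ with $\mathcal{S}\subseteq \aA_n^*\times \aA_n^*$ finite. Let $M$ denote the maximum length of a word appearing in $\mathcal{S}$.

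\emph{A defining relation of arbitrary length.} For each $m\in\mathbb{N}$ consider the words $w_m=1\,n^m\,1$ and $w'_m=1\,1\,n^m$. Direct application of Algorithm~\ref{alg:Bellword} shows that $\bB_r(w_m)=\bB_r(w'_m)$: both words produce the \rps\ tableau whose first column is two $1$'s stacked and whose second column is $m$ copies of $n$. Equivalently, $(w_m,w'_m)\in \rR_{\belr_n}$ (take $x=y=1$ and $u=n^m$ in the definition of $\rR_\belr$; here $n\geq 2$ is used to get $y<u_1$). Hence $w_m\belrcongn w'_m$, and the pair must therefore lie in $\mathcal{S}^\#$.

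\emph{No rule of $\mathcal{S}$ acts non-trivially on $w_m$.} Now pick $m>M$. Since no factor of $w_m$ of length at most $M$ can contain both $1$'s of $w_m$, every such factor has one of the forms $n^k$, $1\,n^k$, or $n^k\,1$ with $0\leq k\leq M$. The key technical step is the following claim, which I would prove by listing all words of the given evaluation and computing tableaux with Algorithm~\ref{alg:Bellword}:

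\emph{Each of $n^k$, $1\,n^k$, and $n^k\,1$ is alone in its $\belrcongn$-class.}

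\noindent For $n^k$ this is immediate since multihomogeneity forces any congruent word to consist solely of $n$'s. For $1\,n^k$, a congruent word must be of the form $n^a\,1\,n^b$ with $a+b=k$; running the algorithm shows the resulting tableau has shape $(1,k)$ when $a=0$, shape $(a+1,b)$ when $a,b\geq 1$, and shape $(k+1)$ when $b=0$, and these $k+1$ shapes are pairwise distinct. Hence the $k+1$ candidate words have $k+1$ distinct tableaux, leaving $1\,n^k$ alone in its class. The case $n^k\,1$ is symmetric.

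\emph{Contradiction.} Any rule $(\alpha,\beta)\in\mathcal{S}$ that can be applied inside $w_m$ (in either direction) must have one side equal to a singleton-class factor of $w_m$; the claim then forces $\alpha=\beta$, so the rewrite is trivial and leaves $w_m$ unchanged. Consequently $w_m$ is $\mathcal{S}$-isolated, so $(w_m,w'_m)\notin\mathcal{S}^\#$, contradicting $\mathcal{S}^\#={\belrcongn}$.

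The main obstacle is the singleton-class claim for $1\,n^k$ and $n^k\,1$, which is the only substantive calculation and amounts to a short case analysis of Algorithm~\ref{alg:Bell}. The Tietze-style reduction to the alphabet $\aA_n$ at the start is standard and could be bypassed, at the cost of heavier notation, by working directly with an arbitrary finite generating set.
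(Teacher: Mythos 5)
Your proposal is correct and follows essentially the same route as the paper: the paper also fixes a hypothetical finite presentation over $\aA_n$, observes that the proper factors of $12^i1$ (you use $1n^m1$) are of the forms $2^k$, $12^k$, $2^k1$, each alone in its $\belrcongn$-class, and concludes no finite relation set can derive $12^i1\belrcongn 112^i$ for all $i$. Your explicit shape computation for the words $n^a1n^b$ fills in a step the paper merely asserts, but the argument is the same.
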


\begin{proof}
Let $n\in \mathbb{N}$ be such that $n\geq 2$. Suppose, with the aim of obtaining a contradiction, that $\belr_n$
is finitely presented. Then, there exists a finite presentation for $\belr_n$
using the generating set $\aA_n$ \cite[Proposition 3.1]{ruskucphdthesis}. Let
$(\aA_n, \mathcal{G})$ be such a presentation. Since $n\geq 2$, the relation $12^i1\belrcongn 112^i$ holds, for any $i\in \mathbb{N}$.

The word $12^i1$ can have factors of three possible types of words: $12^k$, $2^k$ or $2^k1$ (where $k\leq i$ can vary). With the generating set $\aA^*_n$, none of the given types of words satisfies a non-trivial relation, meaning that if for example $12^k= w$ in $\belr_n$, then $12^k$ and $w$ are equal as words.

 We conclude that for each $i\in \mathbb{N}$, $\mathcal{G}$
must have a relation whose right-hand side or left-hand side is equal to $12^i1$.
Thus $\mathcal{G}$ is infinite, which a contradiction.
\end{proof}


\section{Growth and identities}
\label{sec:growth}

\subsection{Growth of \ps\ monoids of finite rank}

In this subsection we are interested in studying the growth of the
finitely ranked \lps\ and \rps\ monoids.
As we will see the \lps\ and \rps\ monoids of finite rank have,
in general, distinct type of growths.

Given any monoid $M$ generated by a finite set $\Sigma$, the \emph{growth function} of $M$ with respect to $\Sigma$, is
the function $\gamma_M:\mathbb{N}\to \mathbb{N}$ where $\gamma_M(N)$ is the number of elements of $M$ that can be
expressed as products of length at most $N$ of generators from $\Sigma$.

For functions $f,g:\mathbb{N}\to \mathbb{N}$, we write $f\preceq g$ if there exists a positive constant $c$ such that
for all sufficiently large $N\in \mathbb{N}$, we have $f(N)\leq g(cN)$. The functions $f$ and $g$ are said to be
equivalent if $f\preceq g$ and $g\preceq f$. We shall identify a function $f$ with its equivalence class.

For any given finitely generated monoid, the growth functions relative to two distinct finite generating sets are
equivalent (see \cite[Section~9]{Otto1997} and \cite{sapir2014combinatorial} for more details on the growth of
functions). As such, we consider equivalence classes of functions to study the growth of monoids.

With the above definition, we say that a finitely generated monoid $M$ has \emph{polynomial growth} if $\gamma_M$ is
bounded above by a function $N^k$, for some $k\in \mathbb{N}$.  We say that the growth of $M$ is \emph{exponential}, if
$\gamma_M$ is bounded by below by a function $k^N$ for some $k \in \mathbb{N}$.

Note that the free monoid of rank $1$ has
polynomial growth, and for any $n\in \mathbb{N}$, with $n\geq 2$, the free
monoid of rank $n$ has exponential growth.

It is clear that both $\bell_1$ and $\belr_1$ are free monoids of rank $1$, and
hence have polynomial growth.

\begin{prop}\label{prop:embedding_into_bell}
For any $n\in \mathbb{N}$, with $n\geq 2$, the free monoid of rank $2^{n-1}$ embeds into $\bell_n$.
\end{prop}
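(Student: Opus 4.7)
The plan is to exhibit $2^{n-1}$ specific words in $\aA_n^*$ that freely generate a submonoid of $\bell_n$. The natural candidates are the \lps\ column words whose minimum entry is $1$. For each subset $S\subseteq\{2,\ldots,n\}$, let $c_S$ denote the (unique) strictly decreasing word on the letter set $\{1\}\cup S$. There are exactly $2^{n-1}$ such words $c_S$, and each one is an \lps\ column word whose smallest letter is $1$.

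Next, I would show that concatenations of these generators are automatically canonical. Let $S_1,\ldots,S_k\subseteq\{2,\ldots,n\}$ and consider the word $c_{S_1}c_{S_2}\cdots c_{S_k}$. Each $c_{S_i}$ is an \lps\ column word and the sequence of column minima is $(1,1,\ldots,1)$, which is weakly increasing and therefore an \lps\ bottom row word. Hence, by the characterisation of canonical words recalled after Proposition~\ref{prop2.3}, the word $c_{S_1}\cdots c_{S_k}$ is already an \lps\ canonical word and $(c_{S_1},\ldots,c_{S_k})$ is its \lps\ column configuration. In particular $\cC\bigl(\bB_\ell(c_{S_1}\cdots c_{S_k})\bigr)=c_{S_1}\cdots c_{S_k}$, and the tableau $\bB_\ell(c_{S_1}\cdots c_{S_k})$ is the juxtaposition of the columns $c_{S_1},\ldots,c_{S_k}$.

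From this, injectivity is immediate. If $(S_1,\ldots,S_k)\neq (T_1,\ldots,T_\ell)$ then the column configurations of $c_{S_1}\cdots c_{S_k}$ and $c_{T_1}\cdots c_{T_\ell}$ differ (the columns of the tableau directly read off the subsets and their order), so the two tableaux are distinct. Since distinct canonical words represent distinct elements of $\bell_n$ by Proposition~\ref{prop2.3}, the monoid homomorphism from the free monoid on the alphabet $\{x_S : S\subseteq\{2,\ldots,n\}\}$ into $\bell_n$ defined by $x_S\mapsto c_S$ is injective, giving the required embedding.

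I do not expect any serious obstacle: the whole argument rests on the observation that forcing $1$ into every column makes concatenation of the generators trivially respect the canonical-word condition, so no relation of $\bell_n$ can collapse distinct products. The only subtlety to be careful about is to cite the right fact, namely that the decomposition of a canonical word into its columns is determined by its underlying word (via $\cC$ and the column-configuration characterisation established in Subsection~\ref{subsect1.2}), which is exactly what gives uniqueness of the representation $c_{S_1}\cdots c_{S_k}$.
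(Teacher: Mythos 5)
Your proposal is correct and follows essentially the same route as the paper: both take as free generators the $2^{n-1}$ \lps\ column words with minimum symbol $1$, observe that any product of them is already an \lps\ canonical word whose column configuration is exactly the sequence of generators used, and recover the factors from the tableau. The only cosmetic difference is that you justify the juxtaposition property by appealing to the canonical-word characterisation around Proposition~\ref{prop2.3}, whereas the paper runs the short induction on the insertion algorithm directly; both are valid.
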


\begin{proof}
Let $C_n$ 
be the subset of $\aA_n^*$ of \lps\ column words in which any
of its elements has rightmost symbol $1$.
In $\aA_n$ there are $n-1$ elements greater than $1$. So, as \lps\ column words are strictly decreasing
words, we can conclude that there are
$2^{n-1}$ elements in
$C_n$.

Let $c_1\cdots c_k$ be a product of elements of $C_n$. By induction on
$i$, it is straightforward to see that the tableau $\bB_\ell(c_1\cdots c_i)$ only has symbols
$1$ (the last symbol of each element of $C_n$) on its bottom row, and thus that
$c_{i+1}$ is inserted as a new column at the right of the tableau. Hence the $i$-th column of $\bB_\ell(c_1\cdots
c_k)$ is simply $c_i$. Since each term of a product of elements of
$C_n$ can be recovered from the tableau, it follows that the elements of $C_n$ generate a free submonoid of $\bell_n$.
\end{proof}

Using the fact that if $M_1$ is a submonoid $M_2$, then $\gamma_{M_1}\preceq \gamma_{M_2}$
\cite{sapir2014combinatorial}, we deduce the following:

\begin{cor}\label{exponentialgrowth}
	The \lps\ monoids of rank greater than $1$ have exponential growth.
\end{cor}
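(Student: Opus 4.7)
The plan is to combine Proposition~\ref{prop:embedding_into_bell} with the submonoid growth inequality $\gamma_{M_1}\preceq \gamma_{M_2}$ mentioned immediately before the statement. Since the corollary asserts only exponential growth (a lower bound), essentially no work beyond these two ingredients is required.

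First, I would fix $n\in\mathbb{N}$ with $n\geq 2$ and invoke Proposition~\ref{prop:embedding_into_bell} to produce an embedded copy of the free monoid of rank $2^{n-1}$ inside $\bell_n$. Since $n\geq 2$ gives $2^{n-1}\geq 2$, this free submonoid has rank at least $2$ and therefore, as noted in the paragraph preceding the corollary, has exponential growth: its growth function is bounded below by $2^N$ (counting words of length $N$ in the $2^{n-1}$ free generators).

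Second, I would apply the stated monotonicity of the growth function under taking submonoids, $\gamma_{M_1}\preceq \gamma_{M_2}$, to conclude that $\gamma_{\bell_n}$ is also bounded below by an exponential function. By the definition of exponential growth given just above, this is exactly what is required.

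There is no substantive obstacle here: the work has all been done in Proposition~\ref{prop:embedding_into_bell} and in recalling the behaviour of growth under inclusion. The only minor care point is to record explicitly that $2^{n-1}\geq 2$ when $n\geq 2$, so that the embedded free monoid indeed has rank large enough to contribute exponential growth; for $n=2$ one already obtains an embedded free monoid of rank $2$, which is the smallest rank giving exponential growth.
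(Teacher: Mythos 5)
Your proposal is correct and follows exactly the paper's argument: the corollary is deduced from Proposition~\ref{prop:embedding_into_bell} together with the fact that $\gamma_{M_1}\preceq\gamma_{M_2}$ for a submonoid $M_1$ of $M_2$, the embedded free monoid having rank $2^{n-1}\geq 2$ and hence exponential growth. Nothing further is needed.
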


Regarding the monoids $\belr_n$, for $n\geq 2$, we will show that
they have polynomial growth.

\begin{thm}
	For any $n\in \mathbb{N}$, the monoid $\belr_n$ has polynomial growth.
\end{thm}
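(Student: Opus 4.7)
The plan is to count $\rps$ tableaux over $\aA_n$ with at most $N$ boxes, and show that this count is polynomial in $N$. By Proposition~\ref{prop2.3} together with Subsection~\ref{subsec:Bell_congruence}, each element of $\belr_n$ is represented by a unique $\rps$ canonical word, hence by a unique $\rps$ tableau; moreover, the presentation $(\aA_n,\rR_{\belr_n})$ is multihomogeneous, so the number of boxes of the tableau equals the length of any word representing the element. Therefore $\gamma_{\belr_n}(N)$ equals the number of $\rps$ tableaux over $\aA_n$ with at most $N$ boxes.

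Next, I would exploit the rigid shape of an $\rps$ tableau over $\aA_n$. Its bottom row is strictly increasing with entries from $\aA_n$, so it is determined by a subset $S \subseteq \aA_n$, and in particular an $\rps$ tableau has at most $n$ columns. Once $S$ is fixed (there are at most $2^n$ choices, independent of $N$), the tableau is determined by specifying, for each $i \in S$, the column $C_i$ whose bottom entry is $i$. Since the columns of an $\rps$ tableau are weakly decreasing from top to bottom, $C_i$ is nothing but a weakly decreasing sequence of length $h_i \ge 1$ taking values in $\{i, i+1, \ldots, n\}$ and ending in $i$; equivalently, it is a multiset of size $h_i - 1$ drawn from the $(n-i+1)$-element set $\{i, i+1, \ldots, n\}$. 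Thus the number of possible columns $C_i$ of length $h_i$ equals $\binom{h_i - 1 + n - i}{n - i}$, a polynomial in $h_i$ of degree at most $n - 1$.

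Putting this together, the number of $\rps$ tableaux over $\aA_n$ with at most $N$ boxes is bounded by
\begin{equation*}
\sum_{S \subseteq \aA_n}\ \sum_{\substack{(h_i)_{i \in S} \in \mathbb{N}^{|S|} \\ \sum_{i \in S} h_i \le N}}\ \prod_{i \in S} \binom{h_i - 1 + n - i}{n - i}.
\end{equation*}
The outer sum is a constant ($\le 2^n$); each factor in the product is at most $(N+n)^{n-1}$; and the number of tuples $(h_i)_{i \in S}$ with $\sum h_i \le N$ is bounded by $(N+1)^{|S|} \le (N+1)^n$. Hence the whole expression is bounded by a polynomial in $N$ of degree at most $n + n(n-1) = n^2$, and $\belr_n$ has polynomial growth.

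The argument is essentially a counting exercise; the only non-routine input is the recognition that bottom rows of $\rps$ tableaux are \emph{strictly} increasing (whence the crucial bound of $n$ on the number of columns) and that columns over $\aA_n$ form a polynomially growing family of multisets. The contrast with the $\lps$ case in Corollary~\ref{exponentialgrowth} is instructive: there, $\lps$ columns are strictly decreasing sequences of bounded size, but \emph{bottom rows} are only weakly increasing, so one can produce unboundedly many columns, which is precisely what drives the exponential growth there.
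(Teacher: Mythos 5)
Your proof is correct and follows essentially the same route as the paper: both arguments count \rps\ tableaux with at most $N$ boxes, use the strictly increasing bottom row to bound the number of columns by $n$, and count each column as a weakly decreasing word (multiset) over $\aA_n$ via a binomial coefficient that is polynomial in the height. Your version is in fact slightly more careful in the final combination step (summing over shapes and taking the product over columns, giving degree at most $n^2$), whereas the paper combines the column count and the column bound rather more loosely; the conclusion is the same.
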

\begin{proof}
  Let $n,N\in \mathbb{N}$ and consider the growth function $\gamma_{\belr_n}$. The value $\gamma_{\belr_n}(N)$ indicates the
  number of \rps\ tableaux with at most $N$ entries. Notice that each such \rps\ tableau has at most $n$ columns and
  each column has at most height $N$.

  A column of height at most $N$ is filled with symbols from the set $\aA_n$.  The number of weakly decreasing words
  over $\aA_n$ of length at most $N\in\mathbb{N}$ is uniquely determined by the number of solutions to
  $m_1+\cdots+m_n\leq N$, where for any $i\in \aA_n$, $m_i$ denotes the (non-negative) number of $i$'s occurring in the
  considered word of length at most $N$.  The number of solutions to this inequality is given by $\binom{n+N}{N}$
  \cite[Subsection~1.2]{Stanley2011}.  Thus there are at most $\binom{n+N}{N}$ different columns of height at most $N$.
  Thus there are at most
  \[
    n\,\binom{n+N}{N}
  \]
  \rps\ tableaux with at most $N$ entries.

  Notice that $\binom{n+N}{N}\leq \binom{n+N}{N} n!= (n+N)(n+N-1)\cdots (N +1)\leq (n+ N)^{n}$, and therefore
  $\gamma_{\belr_n}$ is bounded above by a polinomial function with non-negative coefficients (This is equivalent to
  have polynomial growth, cf. \cite{deLuca:2011:FRS:2408037}).
\end{proof}

\subsection{Identities on \ps\ monoids}

An \emph{identity} is a formal equality between words of the free monoid over a countable alphabet $\Sigma$. For any monoid
$M$, we say that $M$ satisfies the identity $u=v$ if for any morphism $f:\Sigma^*\to M$, $f(u)=f(v)$, that is, for any
substitution of symbols in $u$ and $v$ by elements of $M$, the equality holds in $M$. For instance, the identity $xy=yx$
is satisfied by any commutative monoid and the bicyclic monoid satisfies Adjan's identity $xyyxxyxyyx = xyyxyxxyyx$
\cite{MR0218434}. The \lps\ and \rps\ monoids are multihomogeneous, therefore the left side and the right side of any
identity satisfied by them has the same length (in fact the same number of each symbol). Thus, we will say that an
identity has length $n\in\mathbb{N}$ if the word on the left side of the identity has length $n$.

Note first that both $\bell_1$ and $\belr_1$ are isomorphic to the free
monoid of rank $1$, which is commutative and therefore satisfies the
non-trivial identity $xy=yx$.

As free monoids of rank greater or equal than $2$ do not satisfy
non-trivial identities, the following proposition is a consequence of
Proposition~\ref{prop:embedding_into_bell}.

\begin{cor}
  The monoids $\bell$ and $\bell_n$ for $n \geq 2$ do not satisfy non-trivial identities.
\end{cor}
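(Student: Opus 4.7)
The plan is to deduce this corollary directly from Proposition~\ref{prop:embedding_into_bell} together with the standard fact that a submonoid inherits all identities of its overmonoid. The key observation is that for $n \geq 2$ we have $2^{n-1} \geq 2$, so Proposition~\ref{prop:embedding_into_bell} gives an embedding of a free monoid of rank at least $2$ into $\bell_n$.

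First, I would recall that no non-commutative free monoid of rank at least $2$ can satisfy a non-trivial identity: given any identity $u = v$ with $u \neq v$ as words, one can choose generators of the free monoid as the images of the variables, and since the free monoid is defined by having no non-trivial relations, $u$ and $v$ would have to be identical words, contradicting non-triviality. Next, I would invoke the elementary fact that if $N$ embeds as a submonoid of $M$, every identity satisfied by $M$ is also satisfied by $N$: any morphism $f \colon \Sigma^* \to N$ composes with the inclusion to give a morphism into $M$, and the equality $f(u) = f(v)$ forced by the identity in $M$ therefore holds in $N$.

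Combining these two observations yields the statement for $\bell_n$ with $n \geq 2$. For the infinite-rank monoid $\bell$, I would note that $\bell_n$ itself embeds into $\bell$ via the inclusion of $\aA_n^*$ into $\aA^*$ (which descends to the quotients because $\rR_{\bell_n} \subseteq \rR_\bell$ and the \lps\ congruence on $\aA_n^*$ is the restriction of the \lps\ congruence on $\aA^*$). Hence $\bell$ contains a free submonoid of rank $2$ (for example the one obtained from $n = 2$), and the same inheritance argument rules out non-trivial identities.

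There is no real obstacle here: the whole argument is a chain of two embeddings (free of rank $2^{n-1}$ into $\bell_n$ into $\bell$) combined with the trivial inheritance of identities under submonoid inclusion. The only point that deserves a brief mention — to keep the proof self-contained — is the embedding $\bell_n \hookrightarrow \bell$, which follows immediately from the fact that the defining relations $\rR_\bell$ involve only letters appearing in the words being related, so the restriction of $\bellcong$ to $\aA_n^*$ coincides with $\bellcongn$.
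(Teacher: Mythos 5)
Your argument is correct and is essentially the paper's own: the corollary is deduced from Proposition~\ref{prop:embedding_into_bell} together with the facts that free monoids of rank at least $2$ satisfy no non-trivial identity and that identities are inherited by submonoids. Your extra remark justifying the embedding $\bell_n \hookrightarrow \bell$ only makes explicit a step the paper leaves implicit.
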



This result is contrary to what happens in other multihomogeneous monoids like the sylvester, the hypoplactic, the
Chinese, the Baxter, the stalactic, or the taiga monoid, all of which satisfy non-trivial identities
\cite{1611.04151}. As we will see in the following paragraphs, there is a hierarchy of different identities satisfied by
finite-rank \rps\ monoids, but none satisfies by the infinite-rank \rps\ monoid. (Compare the situation for plactic
monoids: it is known that the infinite-rank plactic monoid does not satisfy a non-trivial identity, but it is not known
whether plactic monoids of rank greater than $3$ satisfy non-trivial identities \cite{ckkmo_placticidentity}.)

Regarding $\belr_n$, with $n\geq 2$, we first  prove some auxiliary lemmata.

\begin{lem}\label{lem:identity1}
	Let $n\in \mathbb{N}$ and $w\in \aA^*_{n}$. Suppose that $w$ has exactly $k$ different symbols $i_1<\dots<i_{k}$ (so $|w|\geq k$). Then, for $j\leq k$, the \rps\ bottom row word of the tableau $\bB_r(w^j)$ has prefix $i_1i_2\cdots i_j$.
\end{lem}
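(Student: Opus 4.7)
The plan is to proceed by induction on $j$. For $j=1$, the bottom row of $\bB_r(w)$ is strictly increasing from left to right, so its leftmost entry is the minimum over all bottom-of-column entries. Since every symbol of $w$ lies in $\{i_1,\ldots,i_k\}$ and $i_1$ actually occurs in $w$, this minimum must be $i_1$, giving the prefix $i_1$.

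For the inductive step, assume that the bottom row of $T := \bB_r(w^{j-1})$ begins with $i_1 i_2 \cdots i_{j-1}$, and build $\bB_r(w^j)$ by inserting the $j$-th copy of $w$ into $T$ symbol by symbol using Algorithm~\ref{alg:Bell}. I would track the effect on the bottom row; in the \rps\ version, inserting a symbol $a$ either (i) appends $a$ as a new column at the right end, when $a$ is strictly greater than every bottom-row entry, or (ii) replaces the leftmost bottom-row entry $z\geq a$ by $a$.

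The heart of the argument is the invariant that, throughout the insertion of the $j$-th copy, positions $1,\ldots,j-1$ of the bottom row continue to read $i_1,\ldots,i_{j-1}$. This invariant is preserved by every insertion: if an insertion of $a$ modifies some position $l\leq j-1$, then by the invariant positions $1,\ldots,l-1$ currently read $i_1,\ldots,i_{l-1}$ and are all smaller than $a$, forcing $a > i_{l-1}$; also the current value at position $l$ is $i_l \geq a$; but $a\in\{i_1,\ldots,i_k\}$, so necessarily $a=i_l$, and position $l$ does not actually change. Now, since $j\leq k$, the symbol $i_j$ appears in $w$, so at some step we insert $i_j$. By the invariant, positions $1,\ldots,j-1$ then read $i_1,\ldots,i_{j-1}$, all strictly below $i_j$; if the current bottom row has only $j-1$ entries, the insertion appends a new column with bottom $i_j$, while otherwise position $j$ already exists with value $> i_{j-1}$, which (being in $\{i_1,\ldots,i_k\}$) must be $\geq i_j$, and the insertion bumps this column, leaving $i_j$ at its bottom. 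Re-running the invariant argument with $j$ in place of $j-1$ shows that position $j$ stays equal to $i_j$ for the remaining insertions, so the bottom row of $\bB_r(w^j)$ begins with $i_1 i_2\cdots i_j$.

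The main hurdle is isolating the right invariant and verifying that no insertion can lower a bottom-row entry $i_l$ with $l\leq j$ to a smaller value; this uses crucially that no symbol strictly between consecutive elements $i_{l-1}$ and $i_l$ is ever inserted, because the alphabet of $w$ is exactly $\{i_1,\ldots,i_k\}$.
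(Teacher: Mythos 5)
Your proof is correct, but your inductive step runs along a different track from the paper's. The paper also inducts on $j$, but at each step it invokes Lemma~\ref{lemma:equal_symbols_in_tableau}: it picks the occurrence $a_1$ of $i_j$ sitting at the bottom of column $j$ of $\bB_r(w^j)$ and the rightmost occurrence $a_2$ of $i_{j+1}$ in $w^{j+1}$, notes that $a_1a_2$ is a subsequence, concludes that $i_{j+1}$ lands in a column strictly to the right of column $j$, and then uses the fact that no symbol of $w$ lies strictly between $i_j$ and $i_{j+1}$ to pin the bottom entry of that column down to $i_{j+1}$. You instead track the insertion of the $j$-th copy of $w$ symbol by symbol and maintain the invariant that the first $j-1$ (and, once $i_j$ has been inserted, $j$) bottom-row positions read $i_1,\ldots,i_{j-1}$; the key point in both arguments is the same, namely that the gap-free alphabet $\{i_1,\ldots,i_k\}$ forces any symbol bumping column $l$ to equal $i_l$. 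Your route is more elementary and self-contained --- it does not rely on the earlier subsequence-position lemma --- and it has the virtue of making explicit why the first $j$ bottom entries are preserved, a point the paper's argument leaves largely implicit (there one must still observe that the bottom row is strictly increasing with all entries in $\{i_1,\ldots,i_k\}$, so the $l$-th bottom entry is at least $i_l$, while column $l$ still contains $i_l$ and hence has bottom at most $i_l$). The price is a longer case analysis of the insertion algorithm, where the paper gets the location of $i_{j+1}$ in one stroke from the lemma.
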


\begin{proof}
We proceed by induction on $j$. The result holds trivially for $j=1$ since $i_1$ is the smallest symbol in $w$ and so the leftmost column of $\bB_r(w)$ has $i_1$ as the bottom symbol.

Now suppose that the result holds for $j<k$, and so  the  bottom row word of the tableau $\bB_r(w^j)$ has prefix $i_1i_2\cdots i_j$.
Denote by $a_1$ the symbol in $w^j$ that takes the position of $i_j$ in the bottom row of $\bB_r(w^j)$. Since  $w$ has $k$ different symbols then denote by $a_2$ the rightmost symbol $i_{j+1}$ in $w^{j+1}$.
As result, $a_1a_2$ is a subsequence of $w^{j+1}$. By Lemma~\ref{lemma:equal_symbols_in_tableau} and since $a_1<a_2$, the symbol $a_2$ is positioned in $\bB_r(w^{j+1})$ in a column further to the right of the column containing $a_1$.

Resuming, there is a column, say $c$,  further to the right of the column containing $a_1=i_j$ which has the symbol $i_{j+1}$. Hence, $c$ has a bottom symbol $a$ which satisfies $i_j<a\leq i_{j+1}$. Since there is no other symbol from $w$ between $i_j$ and $i_{j+1}$, we deduce that $a=i_{j+1}$ and that $\bB_r(w^{j+1})$ has bottom row word with prefix $i_1i_2\cdots i_ji_{j+1}$.
\end{proof}


\begin{lem}\label{lem:identity2}
Let $n\in \mathbb{N}$ and $w\in \aA^*_{n}$. Suppose that $w$ has exactly $k$
different symbols $i_1<\dots<i_{k}$ and that $\bB_r(w)$  has bottom row word
$i_1\cdots i_k$. Then, for any words $p,q\in\{i_1,\ldots,i_k\}^*$, with
$\ev{p}=\ev{q}$, we get
\[\bB_r(wp)=\bB_r(wq).\]
\end{lem}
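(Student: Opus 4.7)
The plan is to show that, under the hypotheses, every letter we append from $\{i_1,\ldots,i_k\}$ is forced into a uniquely determined column, and its insertion changes only the length of that column (not the bottom row). Once this is established, the effect of appending a word $p\in\{i_1,\ldots,i_k\}^*$ depends only on the multiset $\ev{p}$, from which the statement follows.

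First I would prove the following single-step claim: if $B$ is an \rps\ tableau whose bottom row word is exactly $i_1\cdots i_k$, then for each $j\in\{1,\ldots,k\}$ the insertion $B\leftarrow i_j$ (via Algorithm~\ref{alg:Bell}) places $i_j$ at the bottom of the $j$-th column, pushing every previous entry of that column up by one, and leaves the bottom row word unchanged. Indeed, since $i_j$ is not strictly greater than every entry of the bottom row (it equals the entry in column $j$), step~(2) of Algorithm~\ref{alg:Bell} for the \rps\ case applies; the leftmost bottom-row entry that is greater than or equal to $i_j$ is $i_j$ itself in column $j$, because $i_1<\cdots<i_{j-1}<i_j<i_{j+1}<\cdots<i_k$. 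So $i_j$ bumps everything in column $j$ upwards and becomes the new bottom entry; the bottom row word is therefore still $i_1\cdots i_k$, and column $j$ has grown by one box (a new $i_j$ at the bottom, sitting below the old $i_j$, which is consistent with the weakly decreasing rule of \rps\ columns).

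Next, by a straightforward induction on $|p|$, applying the single-step claim to the tableau $\bB_r(wp')$ at each step (where $p=p'a$), I conclude that $\bB_r(wp)$ is obtained from $\bB_r(w)$ by appending, for each $\ell\in\{1,\ldots,k\}$, exactly $|p|_{i_\ell}$ extra copies of $i_\ell$ at the bottom of column $\ell$, and that its bottom row word is again $i_1\cdots i_k$. In particular, the resulting \rps\ tableau is determined entirely by the tuple $(|p|_{i_1},\ldots,|p|_{i_k})$, which is recoverable from $\ev{p}$. Hence if $\ev{p}=\ev{q}$ with $p,q\in\{i_1,\ldots,i_k\}^*$, then $\bB_r(wp)=\bB_r(wq)$.

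The only delicate point is the first step, where one must be careful to use the correct \rps\ variant of Algorithm~\ref{alg:Bell} (leftmost bottom entry that is greater than or equal to $i_j$, not strictly greater) and to check that the bottom row word is genuinely preserved; everything afterwards is an immediate induction. No further combinatorial machinery is needed.
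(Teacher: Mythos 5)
Your proof is correct and takes essentially the same approach as the paper: the paper's (much terser) proof simply observes that each symbol $i_j$ of $p$ or $q$ is inserted into the column with bottom symbol $i_j$, so the result depends only on $\ev{p}$. Your single-step claim and induction are a careful expansion of exactly that observation.
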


\begin{proof}
The result follows from the execution of Algorithm~\ref{alg:Bellword} to compute
$\bB_r(w)\leftarrow p$ and $\bB_r(w)\leftarrow q$. Each symbol $i_j$ (in $p$ or
$q$) will be inserted in the column with bottom symbol $i_j$. Therefore, since
$\ev{p}=\ev{q}$ we get the intended result.
\end{proof}

\begin{prop}\label{identityrps}
	For any $n\in \mathbb{N}$, the monoid $\belr_{n}$ satisfies the identity:
	\begin{equation*}
		(xy)^{n+1}=(xy)^{n}yx.
	\end{equation*}
\end{prop}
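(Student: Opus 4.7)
The plan is to reduce the identity to the two preparatory lemmas by observing that once the \rps\ tableau of a power $w^k$ has a ``saturated'' bottom row (one containing every distinct letter of $w$), further multiplication on the right by words over the same alphabet depends only on letter evaluations.

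Concretely, fix arbitrary $u, v \in \aA_n^*$; I want to show $\bB_r((uv)^{n+1}) = \bB_r((uv)^n vu)$. Set $w = uv$ and let $k$ be the number of distinct letters of $w$, written $i_1 < \cdots < i_k$. The case $w = \varepsilon$ (i.e.\ $k = 0$) is trivial, so assume $k \geq 1$. Since the letters of $w$ lie in $\aA_n$, we have $k \leq n$, and in particular $n \geq k$ so the factorisations $(uv)^{n+1} = w^k \cdot w^{n+1-k}$ and $(uv)^n vu = w^k \cdot w^{n-k} vu$ make sense.

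Next, I would apply Lemma~\ref{lem:identity1} with $j = k$ to see that the bottom row of $\bB_r(w^k)$ has prefix $i_1 i_2 \cdots i_k$. But the bottom row of any \rps\ tableau is strictly increasing, and every entry of $w^k$ is drawn from $\{i_1, \ldots, i_k\}$, so the bottom row of $\bB_r(w^k)$ is exactly $i_1 i_2 \cdots i_k$. This places $\bB_r(w^k)$ in the hypothesis of Lemma~\ref{lem:identity2}.

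Finally, I would observe that the two words $w^{n+1-k}$ and $w^{n-k} vu$ both lie in $\{i_1, \ldots, i_k\}^*$ and have the same evaluation, since $\ev{uv} = \ev{vu}$ implies $\ev{w^{n+1-k}} = (n+1-k)\ev{w} = (n-k)\ev{w} + \ev{uv} = \ev{w^{n-k} vu}$. Lemma~\ref{lem:identity2}, applied to $\bB_r(w^k)$ with $p = w^{n+1-k}$ and $q = w^{n-k}vu$, then yields $\bB_r((uv)^{n+1}) = \bB_r((uv)^n vu)$, which is the required identity. There is no real obstacle here beyond identifying the right factorisation; all the combinatorial work is already packaged in Lemmas~\ref{lem:identity1} and~\ref{lem:identity2}, and the bound $k \leq n$ is precisely what lets us reach the saturation regime within $n$ copies of $w$.
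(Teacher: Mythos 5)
Your proof is correct and follows essentially the same route as the paper's: factor $(uv)^{n+1}$ and $(uv)^n vu$ through $(uv)^k$, invoke Lemma~\ref{lem:identity1} to saturate the bottom row, and finish with Lemma~\ref{lem:identity2} using $\ev{uv}=\ev{vu}$. You even fill in two small points the paper leaves implicit --- that $k\leq n$ makes the factorisations legitimate, and that the strictly increasing bottom row over the $k$-letter alphabet forces the prefix $i_1\cdots i_k$ to be the whole bottom row word.
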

\begin{proof}
Let $u,v\in\aA^*_n$. Denote by $i_1< \dots <i_k$ all the symbols from $\aA$ in
the word $uv$. Note that $\ev{uv}=\ev{vu}$.

By Lemma~\ref{lem:identity1}, the tableau
$\bB_r((uv)^k)$  has bottom row word  $i_1\cdots i_k$. Hence, by
Lemma~\ref{lem:identity2}, we obtain $\bB_r((uv)^{n+1})=\bB_r((uv)^{n}vu)$,
since $(uv)^{n+1}=(uv)^k(uv)^{n+1-k}$ and $(uv)^{n}vu=(uv)^k(uv)^{n-k}vu$, and
$\ev{(uv)^{n+1-k}}=\ev{(uv)^{n-k}vu}$.
\end{proof}

\begin{prop}
  \label{prop:rpsnidminlength}
  The monoid $\belr_n$ does not satisfy any non-trivial identity of length less than or equal to $n$.
\end{prop}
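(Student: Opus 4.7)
My plan is to argue by contradiction: suppose $\belr_n$ satisfies a non-trivial identity $u = v$ with $u \neq v$ as words over some alphabet $\Sigma$ and $|u| = |v| = \ell \leq n$. Since $\belr_n$ is multihomogeneous (as observed after Theorem~\ref{thm119}), $u$ and $v$ must have the same content, and in particular involve the same set of $k \leq \ell \leq n$ distinct variables. The aim is to exhibit an injective substitution $\phi \colon \Sigma \to \aA_n$ with $\phi(u) \not\belrcongn \phi(v)$, contradicting the assumed identity.

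The key observation is that the defining relations $\rR_{\belr_n}$ depend genuinely on the ordering of $\aA_n$ through the condition $x \leq y < u_1 \leq \cdots \leq u_m$. Hence different injective substitutions $\phi \colon \Sigma \to \aA_n$ correspond to different ``labelings'' of the variables and induce different $\belr$-congruences on $\Sigma^*$. I would argue that the intersection of these congruences taken over all $k!$ bijections $\Sigma \to \{a_1 < \cdots < a_k\} \subseteq \aA_n$ is the trivial equality relation on $\Sigma^*$; the length bound $k \leq \ell \leq n$ ensures such bijections exist.

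Concretely, I would locate the leftmost position $i$ where $u$ and $v$ differ, writing $u = p\, x\, u''$ and $v = p\, y\, v''$ with $x, y \in \Sigma$ distinct and $|p| = i-1$. I would then construct $\phi$ case-by-case so that, after Algorithm~\ref{alg:Bellword} has processed the common prefix $\phi(p)$ producing an intermediate tableau $T$, inserting $\phi(x)$ and inserting $\phi(y)$ into $T$ interact qualitatively differently --- for example, one opens a new rightmost column while the other pushes an existing column (by the case split in Algorithm~\ref{alg:Bell}). Once this divergence at step $i$ is established, the subsequent $\ell - i < n$ insertions must be tracked using Lemma~\ref{lemma:equal_symbols_in_tableau} (which locates each incoming letter relative to earlier equal or smaller letters) to verify that the two tableaux remain distinct canonical words in the sense of Proposition~\ref{prop2.3}, which would finish the contradiction.

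The main obstacle I expect is in that final step: showing that no subsequent suffix insertions can bring the two tableaux back together. In the example $u = 213$, $v = 231$ one sees that an arbitrary injection need not separate the two words, so the initial choice of $\phi(x)$ and $\phi(y)$ relative to $\phi(p)$ must be made judiciously and may depend on the structure of $u''$ and $v''$. Making this rigorous requires a careful case analysis, most likely organized as an induction on the remaining length $\ell - i$, exploiting the bound $\ell \leq n$ to limit how many times the extremal letters chosen for $\phi(x)$ and $\phi(y)$ can recur in the suffixes and thereby cause pathological column interactions.
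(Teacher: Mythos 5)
Your write-up is a plan rather than a proof, and the plan's central step is exactly the part you leave open. The claim you would need --- that for any two distinct content-equal words $u\neq v$ of length $\ell\leq n$ there is an \emph{injective letter} substitution $\phi\colon\Sigma\to\aA_n$ with $\phi(u)\not\belrcongn\phi(v)$, and that the divergence created at the first differing position survives the remaining $\ell-i$ insertions --- is never established; you explicitly defer it to an unspecified case analysis. The difficulty is worse than you describe: after position $i$ the suffixes $u''$ and $v''$ are themselves different words, so you are not inserting a common suffix into two different tableaux but different letter sequences into different tableaux, and ``the tableaux cannot re-merge'' has no obvious monotone invariant behind it (your own example $213\belrcongn 231$ shows single insertions routinely collapse distinctions). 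It is moreover not even clear that injective single-letter substitutions suffice in principle; nothing in your argument rules out a pair $u\neq v$ identified by \emph{every} such $\phi$ but separated only by substituting longer words.

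The paper's proof avoids all of this by substituting \emph{column words} rather than letters, and by using one global invariant instead of tracking the insertion step by step. After the standard reduction to a two-variable identity $u(x,y)=v(x,y)$ of length exactly $n$ whose $j$-th variables differ, it sets $s=n(n-1)\cdots 1$ and $t=n(n-1)\cdots(j+1)(j-1)\cdots 1$ and compares the number of columns of $\bB_r(u(s,t))$ and $\bB_r(v(s,t))$, i.e.\ the length of the longest strictly increasing subsequence. Because each substituted block is strictly decreasing, an increasing subsequence meets each of the $n$ blocks at most once, so $u(s,t)$ contains $12\cdots n$ (block $j$ is $s$, which contains $j$) while $v(s,t)$ cannot contain any strictly increasing subsequence of length $n$ (block $j$ is $t$, which omits $j$). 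This single computation replaces your entire proposed induction. If you want to salvage your approach, you should at minimum prove the separating-substitution claim, and the natural way to do that is precisely to enlarge the class of substitutions to decreasing words as the paper does.
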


The proof of this result follows closely the proof strategy of \cite[Proposition~3.1]{ckkmo_placticidentity} and thus we
only sketch it.

\begin{proof}[Sketch proof]
  Suppose, with the aim of obtaining a contradiction, that $\belr_n$ satisfies a non-trivial identity of length less
  than or equal to $n$. Without loss of generality, assume that it satisfies such an identity over the variable set
  $\{x,y\}$, say $u(x,y) = v(x,y)$, of length equal to $n$ (that is, with $|u| = |v| = n$), where the $j$-th variable
  of $u$ is $x$ and the $j$-th variable of $v$ is $y$.

  Let $s = n(n-1)\cdots 21 \in \aA_n^*$ and let $t = n(n-1)\cdots(j+1)(j-1)\cdots 21 \in \aA_n^*$. (So the word $t$ does not
  contain the generator $j$.)  Since $\belr_n$ satisfies the identity $u(x,y) = v(x,y)$, the tableaux $\bB_r(u(s,t))$
  and $\bB_r(v(s,t))$ are equal, and so the lengths of their bottom rows are equal, which means the lengths of the
  longest strictly increasing subsequences of $u(s,t)$ and $v(s,t)$ are equal. But $u(s,t)$ contains the strictly
  increasing subsequence $12\cdots n$ (since, in particular, $s$ is substituted for the $j$-th variable of $u$), while
  $v(s,t)$ does not contain a strictly increasing subsequence of length at least $n$, since such a subsequence can
  include at most one symbol from each $s$ or $t$ and $t$ was substituted for the $j$-th variable of $v$. This is a
  contradiction.
\end{proof}

Every finite-rank \rps\ monoid $\belr_n$ is a submonoid of the infinite-rank \rps\ monoid $\belr$, so
Proposition~\ref{prop:rpsnidminlength} implies the following result:

\begin{thm}
  The monoid $\belr$ does not satisfy any non-trivial identity.
\end{thm}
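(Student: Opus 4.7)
The plan is to argue by contradiction using Proposition~\ref{prop:rpsnidminlength} together with the observation, already noted in the excerpt, that each finite-rank monoid $\belr_n$ is a submonoid of $\belr$. Concretely, $\belr_n$ is the quotient of $\aA_n^* \subseteq \aA^*$ by the restriction of $\belrcong$, and since an \rps\ tableau on $\aA_n$ is in particular an \rps\ tableau on $\aA$, the inclusion $\aA_n^* \hookrightarrow \aA^*$ descends to an injective homomorphism $\belr_n \hookrightarrow \belr$.

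Suppose, for contradiction, that $\belr$ satisfies a non-trivial identity $u = v$ over some variable alphabet $\Sigma$. Set $N = |u|$ (note $|u| = |v|$ since $\belr$ is multihomogeneous). I would then take any $n \geq N$ and observe that, since $\belr_n$ embeds into $\belr$, every morphism $\Sigma^* \to \belr_n$ can be composed with the embedding to give a morphism $\Sigma^* \to \belr$; the hypothesised identity then forces $\belr_n$ to satisfy $u = v$ as well. This would give $\belr_n$ a non-trivial identity of length $N \leq n$, directly contradicting Proposition~\ref{prop:rpsnidminlength}. Hence no non-trivial identity is possible.

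There is no real obstacle here; the entire content of the theorem lies in Proposition~\ref{prop:rpsnidminlength}, which has already been established. The only minor point to articulate carefully is the passage of identities from $\belr$ down to its submonoid $\belr_n$, which is a routine application of the definition of ``satisfying an identity'' combined with the injectivity of $\belr_n \hookrightarrow \belr$. Once this is spelt out in a single sentence, taking $n = N$ (or any $n \geq N$) closes the argument immediately.
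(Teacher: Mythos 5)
Your proposal is correct and follows essentially the same route as the paper: the paper likewise observes that each $\belr_n$ is a submonoid of $\belr$ and immediately invokes Proposition~\ref{prop:rpsnidminlength}, since an identity satisfied by a monoid is inherited by all of its submonoids. The only difference is that you spell out the embedding and the choice $n \geq N$ explicitly, which the paper leaves implicit.
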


\section{Complete presentations}
\label{sec:complete}

Monoids with finite complete presentations are of great importance, and one of
the main reasons is that they provide a solution to the word problem (see
\cite[Subsection~2.2]{MR1215932}).
The goal of this section is to prove that the presentations $(\aA_n, \rR_{\bell_n})$ and $(\aA_n, \rR_{\belr_n})$ are complete. In \cite{Rey2016}, the author presents two complete rewriting systems, but only on the set of permutations.

In order to prove that the presentations are noetherian we will first introduce an
order on $\aA_n^*$.
The length-plus-lexicographic order (also know as shortlex order or radix order)
is an ordering on $\aA_n^*$ where, given
$\alpha=\alpha_k\cdots\alpha_1,\beta=\beta_l\cdots\beta_1 \in \aA_n^*$, with
$\alpha_1,\ldots,\alpha_k,\beta_1,\ldots,\beta_l\in \aA_n$, then $\alpha\ll \beta$
if
\begin{align*}
k<l \vee \left(k=l \wedge \exists i \left(\alpha_i< \beta_i\wedge \forall
j<i\left(\alpha_j=\beta_j\right)\right)\right).
\end{align*}
It is known  that $\ll$ is a well order on
$\aA_n^*$ \cite[Subsection~2.2]{MR1215932}.
\begin{lem}
	\label{lemma1.5}
	For any $n\in \mathbb{N}$, if $\left(w, w'\right)\in \rR_{\bell_n}$ or $\left(w, w'\right)\in \rR_{\belr_n}$, then $w'\ll w$.
	Thus both presentations $(\aA_n,\rR_{\bell_n})$ and $(\aA_n,\rR_{\belr_n})$ are noetherian.
\end{lem}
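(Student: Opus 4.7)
The plan is to note that each rule has the form $(yux, yxu)$ where $|yux| = |yxu| = m+2$, so by definition of $\ll$ it suffices to find the leftmost position at which the two words differ. Both words begin with $y$, and the second letter is $u_m$ on the left (the leftmost letter of the column word $u = u_m \cdots u_1$) and $x$ on the right. The defining chain of inequalities for $\rR_{\bell_n}$, namely $x < y \leq u_1 < \cdots < u_m$, yields $x < u_m$; the chain for $\rR_{\belr_n}$, namely $x \leq y < u_1 \leq \cdots \leq u_m$, likewise yields $x < u_m$, because the single strict step $y < u_1$ is enough to make the whole chain from $x$ to $u_m$ strict. Thus the second letter of $yxu$ is strictly less than that of $yux$ while all earlier letters agree, giving $yxu \ll yux$.

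For the noetherian claim, I would rely on the fact (noted earlier in the paper) that $\ll$ is a well-order on $\aA_n^*$, together with its compatibility with length-preserving two-sided multiplication: if $|u| = |v|$ and $v \ll u$, then $pvq \ll puq$ for every $p, q \in \aA_n^*$, since the concatenated words still have equal length and their leftmost differing position merely shifts by $|p|$. Applied to each rule $(r^+, r^-) \in \rR_{\bell_n} \cup \rR_{\belr_n}$, this shows that every single-step reduction $p r^+ q \to p r^- q$ yields $p r^- q \ll p r^+ q$. Well-foundedness of $\ll$ then precludes any infinite chain of single-step reductions, so both presentations are noetherian.

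The whole argument is very short once the right comparison is identified; the only mildly delicate point is extracting the strict inequality $x < u_m$ from the \rps\ chain, most of whose individual inequalities are non-strict. As remarked above, the single strict step $y < u_1$ sitting between $x \leq y$ and $u_1 \leq u_m$ is precisely what rescues the argument, and it is present in both variants of the relation.
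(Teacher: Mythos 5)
Your proof is correct and follows essentially the same route as the paper's: both identify that $w=yux$ and $w'=yxu$ have equal length and agree in the first letter, and that the second letters satisfy $x<u_m$ (the paper simply asserts this; you spell out why the single strict inequality in the \rps\ chain suffices), then conclude noetherianity from admissibility and well-foundedness of $\ll$, which the paper cites from Book--Otto rather than arguing directly as you do. No gaps.
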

\begin{proof}
  To prove that $(\aA_n, \rR_{\bell_n})$ (resp., $(\aA_n, \rR_{\belr_n})$) is noetherian we only have to check that
  $w'\ll w$, whenever $\left(w, w'\right)\in \rR_{\bell_n}$ (resp., $\left(w, w'\right)\in \rR_{\belr_n}$), since $\ll$
  is admissible, in the sense of being invariant under left and right multiplication, by
  \cite[Theorem~2.2.4]{MR1215932}. Let $\left(w, w'\right)\in \rR_{\bell_n}$ (resp.,
  $\left(w, w'\right)\in \rR_{\belr_n}$). Then, $w=yux$, $w'=yxu$ for some $u=u_k\cdots u_1\in \aA_n^*$,
  $x,y,u_1,\ldots,u_k \in \aA_n$, $x<y\leq u_1<\ldots<u_k$ (resp., $x\leq y< u_1\leq\ldots\leq u_k$). The words $w$ and
  $w'$ have the same length and the same first symbol, but $x<u_k$, so $w'\ll w$, as required.
\end{proof}

From the previous lemma  we deduce that each $\bell_n$ (resp., $\belr_n$) class has at least one  irreducible element with respect to the relation $\rR_{\bell_n}$ (resp., $\rR_{\belr_n}$) (see \cite[Lemma~2.2.7]{MR1215932}).

\begin{lem}
  For any $n\in \mathbb{N}$, a word is irreducible with respect to the relation $\rR_{\bell_n}$ if and only if it is an
  \lps\ canonical word, and a word is irreducible with respect to the relation $\rR_{\belr_n}$ if and only if it is an
  \rps\ canonical word.
\end{lem}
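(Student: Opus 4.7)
The plan is to treat the \lps\ case in detail; the \rps\ case follows by the analogous argument after interchanging strict and weak inequalities appropriately.

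For \emph{canonical implies irreducible}, assume $w = c_1 \cdots c_k$ is an \lps\ canonical word, with column bottoms $b_1 \leq \cdots \leq b_k$, and suppose for contradiction that $w$ contains a factor $y u_m u_{m-1} \cdots u_1 x$ matching the left-hand side of a relation in $\rR_{\bell_n}$, so $x < y \leq u_1 < u_2 < \cdots < u_m$. I locate these $m+2$ consecutive symbols among the columns of $w$. Since $y \leq u_m$ and each \lps\ column is strictly decreasing from top to bottom, $y$ and $u_m$ cannot lie in the same column; hence $y$ is the bottom of some column $c_i$ and $u_m$ is the top of $c_{i+1}$. For each $j \in \{2,\ldots,m\}$, a column break between $u_j$ and $u_{j-1}$ is likewise impossible: if $u_j$ were the bottom of a column and $u_{j-1}$ the top of the next, the strict decrease of the latter would force its bottom to be at most $u_{j-1} < u_j$, contradicting weakly increasing column bottoms. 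Hence $u_m, \ldots, u_1$ all lie in $c_{i+1}$. Finally, either $x$ also lies in $c_{i+1}$, giving $b_{i+1} \leq x < y = b_i$, or $u_1$ is the bottom of $c_{i+1}$ and $x$ is the top of $c_{i+2}$, giving $b_{i+2} \leq x < y \leq u_1 = b_{i+1}$. Both possibilities contradict the weakly increasing property of the bottoms.

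For \emph{irreducible implies canonical}, I use the rewriting chain already constructed in the proof of Lemma~\ref{prop19}. Each elementary step there has the form $a_i c_{i+1} a \to a_i a c_{i+1}$, where $a_i$ is the bottom of column $c_i$, $c_{i+1}$ is an \lps\ column word, and $a < a_i \leq $ (bottom of $c_{i+1}$). This fits the shape $(yux, yxu)$ of a relation in $\rR_{\bell_n}$, and moreover $a_i a c_{i+1} \ll a_i c_{i+1} a$, so it is a genuine single-step reduction with respect to $\rR_{\bell_n}$. Thus that proof actually yields $w \to_{\rR_{\bell_n}}^* w_c$ for every $w \in \aA_n^*$, where $w_c$ denotes the canonical representative of $\bB_\ell(w)$. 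If $w$ is irreducible, this reduction chain has length zero, so $w = w_c$ is canonical.

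The main obstacle is the careful column-by-column positional analysis in the first direction; once internal column breaks within the factor are ruled out, the rest is routine, and the second direction follows immediately from the already-established rewriting chain.
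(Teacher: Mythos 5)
Your proof is correct, but both halves deviate from the paper's. For \emph{canonical implies irreducible}, the paper argues more economically: since $y\leq u_m$ forces $y$ to be the bottom of its column, every symbol lying in a column to the right of $y$ is at least $y$ (weakly increasing bottoms plus decreasing columns), so the symbol $x<y$ simply cannot occur to the right of $y$; your column-by-column tracking of where each $u_j$ and $x$ must sit reaches the same contradiction but does more bookkeeping. For \emph{irreducible implies canonical}, the paper instead proves the contrapositive directly, exhibiting in any non-canonical word an explicit factor $yu_m\cdots u_1x$ matching a left-hand side. Your route --- observing that the chain built in the proof of Lemma~\ref{prop19} consists entirely of left-to-right applications of relations, hence is a genuine reduction $w\rightarrow_{\rR_{\bell_n}}^*w_c$, so an irreducible word must already equal its canonical form --- is valid and arguably cleaner, since it recycles work already done. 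The one thing you should make explicit is that you are invoking a \emph{strengthening} of Lemma~\ref{prop19}: the lemma as stated only asserts $(w,w_c)\in\rR_{\bell_n}^{\#}$, and you need the directed statement $w\rightarrow_{\rR_{\bell_n}}^*w_c$, which does follow from the same induction (right multiplication preserves $\rightarrow$, and each elementary step $a_ic_{i+1}a\rightarrow a_iac_{i+1}$ rewrites a left-hand side to the corresponding right-hand side) but is not what is literally claimed there. The trade-off: the paper's explicit factor construction is self-contained and is reused conceptually elsewhere, while yours gives the stronger byproduct that every word reduces to its canonical form, which is exactly what confluence plus noetherianity require in Theorem~\ref{thm:complete}.
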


\begin{proof}
We first of all prove that a canonical word is irreducible. Consider a word $w\in\aA^*_n$ that
it is an \lps\ (resp., \rps) canonical word.
 Let $(c_1,\ldots,c_k)$ be the  \lps\
(resp., \rps) column configuration of $w$. For each $i\in \{1,\ldots, k\}$, let
$a_i$ be the rightmost symbol of $c_i$. Recall that $a_1\leq \cdots \leq a_k$
(resp., $a_1< \cdots < a_k$).

The left-hand side  of a rewriting rule in $\rR_{\bell_n}$ (resp.
$\rR_{\belr_n}$) has the form $yu_m\cdots u_1x$, with
$x,y,u_1,\ldots,u_m\in\aA_n$, and  $x<y\leq u_1< \cdots < u_m$ (resp.
$x\leq y< u_1\leq \cdots \leq u_m$).  So, if a rewriting rule is applied to
$w$, then necessarily $y$ is one of the symbols $a_i$ (the last symbol of a
column), because $y\leq u_m$ (resp., $y<u_m$). Note that if $y$ is the
last symbol of the column $c_j$, then the symbols to its right in the word
$w$ (that is, the symbols in $c_{j+1}\cdots c_k$) are all greater than or equal to (resp.,
strictly greater than) $y$. Hence, no symbol $x$ with $x\leq y$ (resp., $x< y$), is
to right of the symbol $y$ in $w$, as it is the case in the word $yu_m\cdots
u_1x$. Therefore, no rewriting rule can be applied to $w$.

Now we prove that a non-canonical word is reducible.  Let $n\in \mathbb{N}$. Consider a word $w\in\aA^*_n$ that is not
an \lps\ (resp. \rps) canonical word. Let $c_1,\ldots,c_k$ be \lps\ (resp. \rps) column words in $\aA_n^*$ such that
$w=c_1\cdots c_k$, and for any $i\in\{1,\ldots,k-1\}$, $c_ic_{i+1}$ is not an \lps\ (resp. \rps) column word, and for
some $j\in\{1,\ldots,k-1\}$ the rightmost symbol in $c_j$ is greater (resp. greater or equal) than the rightmost symbol
in $c_{j+1}$. Note that this decomposition of $w$ exists since $w$ is not an \lps\ (resp. \rps) canonical word.

Denote by $y$ the rightmost symbol in $c_j$, by $x$ the leftmost symbol in $c_{j+1}$ that is smaller (resp. less or
equal) than $y$, and let $u_m,\ldots,u_1\in\aA_n$ and $z\in \aA_n^*$ be such that $c_{j+1}=u_m\cdots u_1xz$. (Note that
$u_m\cdots u_1$ is non-empty since $c_jc_{j+1}$ is not an \lps\ (resp. \rps) column word.) Then $yu_m\cdots u_1x$, with
$x<y\leq u_1< \cdots < u_m$ (resp. $x<y\leq u_1< \cdots < u_m$), is a factor of $w$, and hence $w$ is reducible with
respect to the relation $\rR_{\bell_n}$ (resp. $\rR_{\belr_n}$).
\end{proof}

\begin{lem}\label{lem:uniqueness_on_irreducible}
Let $n\in \mathbb{N}$.  Each $\bell_n$ class and each $\belr_n$ class has a
unique irreducible element.
\end{lem}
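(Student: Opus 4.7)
The plan is to combine the immediately preceding lemma (which identifies irreducibles with canonical words) with Proposition~\ref{prop2.3} (which gives a bijection between canonical words and \ps\ tableaux). Fix $x\in\{\ell,r\}$; the two cases are identical.

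First I would show \emph{existence} of an irreducible representative in each class. Given any $w\in\aA_n^*$, set $w_c=\cC(\bB_x(w))$. By Proposition~\ref{prop2.3}, $w_c$ is an $x$\ps\ canonical word, so by the previous lemma it is irreducible with respect to $\rR_{\bell_n}$ (resp.\ $\rR_{\belr_n}$). Moreover, by Lemma~\ref{prop19}, $(w,w_c)\in\rR_\bell^\#$ (resp.\ $\rR_\belr^\#$), so $w$ and $w_c$ represent the same element of $\bell_n$ (resp.\ $\belr_n$). Hence every class contains at least one irreducible element, namely a canonical word.

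For \emph{uniqueness}, suppose $u$ and $v$ are two irreducible words in the same class. By the previous lemma, both are canonical words, and by the definition of the congruence $\belxcong$ we have $\bB_x(u)=\bB_x(v)$. But Proposition~\ref{prop2.3} asserts that the restriction of $\bB_x$ to the set of canonical words is a bijection onto the set of \ps\ tableaux. Since $u$ and $v$ are canonical with the same image, we conclude $u=v$.

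Combining the two parts yields the claim. There is no real obstacle here: the work has been done in Proposition~\ref{prop2.3} (bijection), Lemma~\ref{prop19} (each word is congruent to its canonical form), and the previous lemma (irreducibles $=$ canonical words); the present lemma is essentially a bookkeeping corollary.
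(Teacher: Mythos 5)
Your proof is correct and the uniqueness argument is exactly the paper's: both irreducibles in a class are canonical words by the preceding lemma, and since $\bB_x(u)=\bB_x(v)$ with $\cC$ inverting $\bB_x$ on canonical words (Proposition~\ref{prop2.3}), they coincide. The only cosmetic difference is in the existence half, which the paper disposes of before the lemma via noetherianity of the rewriting system (Lemma~\ref{lemma1.5} plus the standard fact that a noetherian system gives every class an irreducible descendant), whereas you derive it from Lemma~\ref{prop19} and Proposition~\ref{prop2.3}; both routes are valid.
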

\begin{proof}
As the arguments follow in the same way, we just provide the proof
for the $\bell_n$ case. If $w,w'\in \aA^*_n$ are irreducible words that are in
the same $\bell_n$ class, then by the previous lemma $w$ and $w'$ are \lps\
canonical words. Since they are in the same $\bell_n$ class, that is,
$w\bellcongn w'$ (equivalently, $\bB_\ell(w)=\bB_\ell(w')$), we get
$w=\cC(\bB_\ell(w))=\cC(\bB_\ell(w'))=w'$, by Proposition~\ref{prop2.3}.
\end{proof}

Combining the previous results we
extract the following:

\begin{lem}
For any $n\in \mathbb{N}$, the \lps\ and the \rps\  canonical words are the
smallest words, with respect to the length-plus-lexicographic order, of its
$\bell_n$ and $\belr_n$ classes, respectively.
\end{lem}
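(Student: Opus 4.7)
The plan is to combine the results already established: the noetherian property (Lemma~\ref{lemma1.5}), the characterization of irreducible words (the immediately preceding lemma in the excerpt), and the uniqueness of the irreducible element in each class (Lemma~\ref{lem:uniqueness_on_irreducible}).

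First I would fix $n \in \mathbb{N}$ and work in one of the two cases (the other being completely analogous); say the \lps\ case. Let $w \in \aA_n^*$ and consider its $\bell_n$ class $[w]$. Since $\ll$ is a well-order on $\aA_n^*$, the class $[w]$ has a $\ll$-minimum element, call it $w_0$. The strategy is to show that $w_0$ is irreducible with respect to $\rR_{\bell_n}$, and then invoke the characterization of irreducibles to conclude that $w_0$ is an \lps\ canonical word.

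Suppose, for contradiction, that $w_0$ is reducible. Then there exists $w_0' \in \aA_n^*$ with $w_0 \to_{\rR_{\bell_n}} w_0'$, i.e.\ $w_0 = \alpha y u x \beta$ and $w_0' = \alpha y x u \beta$ for some rewriting rule $(yux, yxu) \in \rR_{\bell_n}$ and some $\alpha, \beta \in \aA_n^*$. By Lemma~\ref{lemma1.5} together with the admissibility of $\ll$ under left and right multiplication, we obtain $w_0' \ll w_0$. On the other hand, single-step reductions lie inside the congruence $\bellcongn$, so $w_0' \in [w]$. This contradicts the minimality of $w_0$ in $[w]$. Hence $w_0$ is irreducible, and by the preceding lemma it is an \lps\ canonical word.

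Finally, by Lemma~\ref{lem:uniqueness_on_irreducible} each $\bell_n$ class contains exactly one irreducible (equivalently, canonical) element, so the unique \lps\ canonical word representing $[w]$ coincides with $w_0$, and is therefore the $\ll$-smallest element of its class. The same argument, with $\rR_{\belr_n}$ and $\belrcongn$ in place of $\rR_{\bell_n}$ and $\bellcongn$, handles the \rps\ case. There is no real obstacle here: the only thing to check carefully is that $\ll$ is compatible with the congruence (i.e.\ that reductions always decrease it), which is precisely the content of Lemma~\ref{lemma1.5}.
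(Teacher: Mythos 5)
Your argument is correct and is exactly the combination the paper intends when it says the lemma follows by ``combining the previous results'': the well-orderedness of $\ll$ gives a minimum element in each class, Lemma~\ref{lemma1.5} forces that minimum to be irreducible, the characterization of irreducibles identifies it as a canonical word, and Lemma~\ref{lem:uniqueness_on_irreducible} pins it down as \emph{the} canonical word of the class. No gaps; this matches the paper's (implicit) proof.
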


\begin{thm}
  \label{thm:complete}
  For every $n\in \mathbb{N}$, the pair $(\aA_n,\rR_{\bell_n})$ is a finite and complete presentation for the \lps\
  monoid of rank $n$, $\bell_n$, whereas $(\aA_n,\rR_{\belr_n})$ is a complete presentation for the \rps\ monoid of rank
  $n$, $\belr_n$.
\end{thm}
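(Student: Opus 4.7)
The plan is to simply assemble the ingredients that the preceding subsection has already prepared, since the hard combinatorial work (identifying irreducibles with canonical words and establishing uniqueness of irreducibles per class) has been done. Concretely, I would argue as follows.

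First, I would recall that $(\aA_n,\rR_{\bell_n})$ and $(\aA_n,\rR_{\belr_n})$ are indeed presentations of $\bell_n$ and $\belr_n$ respectively (this was established at the end of Subsection~\ref{subsec:Bell_congruence} together with the finite-rank definitions). Finiteness of $(\aA_n,\rR_{\bell_n})$ was noted just before Proposition on finite presentability of $\bell_n$, where it was observed that strictly increasing words $u_1<\cdots<u_m$ on the finite alphabet $\aA_n$ are finite in number; and $(\aA_n,\rR_{\belr_n})$ is not claimed to be finite, only complete.

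Next, for both presentations, noetherianity is already given by Lemma~\ref{lemma1.5}, since the length-plus-lexicographic order $\ll$ is a well order on $\aA_n^*$ admissible with respect to concatenation. The only remaining task is confluence. Here I would invoke the standard fact (see \cite[Theorem~1.1.12 or analogous]{MR1215932}) that a noetherian rewriting system is confluent if and only if every congruence class contains a unique irreducible element. Noetherianity guarantees that every class contains at least one irreducible element, and Lemma~\ref{lem:uniqueness_on_irreducible} supplies the uniqueness of the irreducible in each $\bell_n$ class and each $\belr_n$ class. Hence both rewriting systems are confluent and, being also noetherian, are complete.

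Putting these pieces together yields that $(\aA_n,\rR_{\bell_n})$ is a finite complete presentation for $\bell_n$ and $(\aA_n,\rR_{\belr_n})$ is a complete (infinite, for $n\geq 2$) presentation for $\belr_n$. There is no genuine obstacle in this argument; the only subtle point, already dispatched by Lemma~\ref{lem:uniqueness_on_irreducible}, is that the canonical word $\cC(\bB_x(w))$ is genuinely the \emph{only} irreducible representative of its class, which rests on the characterization of irreducibles as canonical words together with Proposition~\ref{prop2.3}.
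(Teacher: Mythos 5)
Your proposal is correct and follows essentially the same route as the paper: noetherianity from Lemma~\ref{lemma1.5} and confluence from the uniqueness of irreducibles in Lemma~\ref{lem:uniqueness_on_irreducible}, which is exactly how the paper's (very short) proof proceeds. The extra remarks you add on finiteness and on the presentations actually defining the monoids are already handled by the paper in the preceding subsections, so nothing further is needed.
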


\begin{proof}
  By Lemma~\ref{lemma1.5} the presentations $(\aA_n,\rR_{\bell_n})$ and $(\aA_n,\rR_{\belr_n})$ are noetherian. By
  Lemma~\ref{lem:uniqueness_on_irreducible}, the presentations are confluent. Thus both presentations are complete.
\end{proof}

Note that, despite not being finite, we can still algorithmically decide whether a word is a left-hand side of a rule in
$\rR_{\belr_n}$ and compute the corresponding right-hand side. Thus, we are able to compute normal forms and therefore
solve the word problem for $\belr_n$ (for more information regarding these constructions, see, for example,
\cite{MR1343236}).

\begin{cor}
	For any $n\in \mathbb{N}$, the monoids $\bell_n$ and $\belr_n$ have solvable word problem.
\end{cor}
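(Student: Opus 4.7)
The plan is to deduce solvability of the word problem from the completeness of the presentations established in Theorem~\ref{thm:complete}, using the standard strategy of normal-form computation via rewriting. Given any input pair $(u,v) \in \aA_n^* \times \aA_n^*$, the idea is to reduce both $u$ and $v$ to their unique irreducible representatives and test for equality as words.

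First I would handle $\bell_n$. Since $(\aA_n, \rR_{\bell_n})$ is finite and complete, one can algorithmically scan a word $w$ for a factor matching the left-hand side of some rule in $\rR_{\bell_n}$, rewrite it, and iterate. Termination follows from the noetherian property (Lemma~\ref{lemma1.5}), and the limit is the unique irreducible word in the class of $w$ by Lemma~\ref{lem:uniqueness_on_irreducible}. Thus $u \bellcongn v$ if and only if the normal forms coincide, which is decidable.

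For $\belr_n$ the presentation $(\aA_n, \rR_{\belr_n})$ is infinite, so some additional remark is required. The key observation, already noted by the authors immediately before the corollary, is that the defining rules have a simple uniform shape: a word is the left-hand side of some rule precisely when it has the form $y u_m \cdots u_1 x$ with $x \le y < u_1 \le \cdots \le u_m$ in $\aA_n$, a condition that can be tested in linear time, and the corresponding right-hand side $y x u_m \cdots u_1$ is then determined. Hence single-step reduction is computable despite the rewriting system being infinite, and the same normal-form procedure as above applies.

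The main (though very mild) obstacle is making clear that the $\belr_n$ case genuinely yields an algorithm; the rest is routine. As an alternative safeguard, one can bypass rewriting entirely and simply note that by Proposition~\ref{prop2.3} and Lemma~\ref{lemma:equivalent_insertions} the map $w \mapsto \cC(\bB_x(w))$ computes the canonical representative of the class of $w$, and Algorithm~\ref{alg:Bellword} runs in time $O(n \log n)$, so computing and comparing canonical words decides the word problem in each case. Either argument suffices.
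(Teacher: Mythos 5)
Your proposal matches the paper's argument: the authors deduce the corollary from the completeness of the presentations (Theorem~\ref{thm:complete}), noting in the remark immediately preceding the corollary that even though $\rR_{\belr_n}$ is infinite, one can algorithmically recognize left-hand sides and compute the corresponding right-hand sides, hence compute normal forms. Your alternative route via directly computing canonical words with Algorithm~\ref{alg:Bellword} is a valid shortcut but not needed; the main argument is essentially identical to the paper's.
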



Combining Theorem~\ref{thm:complete} with \cite[Theorem~5.3]{MR1288942}, we obtain the following corollary:

\begin{cor}
	For any $n\in\mathbb{N}$, the $\bell_n$ monoid has finite derivation type.
\end{cor}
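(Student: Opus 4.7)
The plan is to apply Squier's theorem \cite[Theorem~5.3]{MR1288942}, which asserts that any monoid admitting a finite complete rewriting system has finite derivation type. All the work has already been done in the preceding parts of the section, so this is purely a matter of verifying that the hypotheses of Squier's theorem are satisfied by the presentation at hand.

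The input needed is produced by Theorem~\ref{thm:complete}: for each $n \in \mathbb{N}$, the pair $(\aA_n, \rR_{\bell_n})$ is a finite and complete presentation of $\bell_n$. Finiteness of the alphabet $\aA_n$ is immediate, and finiteness of $\rR_{\bell_n}$ was already observed in the text, since the relations are parameterized by a choice of $x, y \in \aA_n$ and a strictly increasing sequence $u_1 < \cdots < u_m$ in the finite set $\aA_n$, of which there are only finitely many. Completeness, that is, the noetherian and confluent properties, is exactly the content of Theorem~\ref{thm:complete}. With these hypotheses in place, Squier's theorem yields that $\bell_n$ has finite derivation type.

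There is no real obstacle to address: the corollary is a direct invocation of an external theorem using an intermediate result of the paper, and its proof amounts to little more than citing \cite[Theorem~5.3]{MR1288942} once the existence of the finite complete presentation has been established. The only point worth emphasising is that the statement is restricted to $\bell_n$ and cannot be extended to $\belr_n$ in the same way, because $\belr_n$ (for $n \geq 2$) is not even finitely presented, so Squier's theorem does not apply directly in that case.
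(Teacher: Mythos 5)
Your proposal is correct and matches the paper's argument exactly: the corollary is obtained by combining Theorem~\ref{thm:complete} (the finite complete presentation $(\aA_n,\rR_{\bell_n})$) with \cite[Theorem~5.3]{MR1288942}. Your additional remark about why the same route fails for $\belr_n$ (lack of finite presentability for $n\geq 2$) is accurate and consistent with the paper.
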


By \cite{MR1452735} also conclude the following:

\begin{cor}
  For any $n\in\mathbb{N}$, the $\bell_n$ monoid satisfies the ho\-mo\-lo\-gi\-cal conditions left- and right-$FP_{\infty}$.
\end{cor}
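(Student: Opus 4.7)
The plan is to derive this as a direct consequence of the finite complete presentation obtained in Theorem~\ref{thm:complete}, by invoking the known homological finiteness result cited as \cite{MR1452735}. The key point is that monoids admitting a finite complete rewriting system are of type left- and right-$FP_\infty$; this is precisely the content of the cited reference. Since Theorem~\ref{thm:complete} exhibits $(\aA_n,\rR_{\bell_n})$ as such a presentation for $\bell_n$ (it is finite by the paragraph preceding the proposition on finite presentability of $\bell_n$, and it is complete by Lemmas~\ref{lemma1.5} and \ref{lem:uniqueness_on_irreducible} combined in Theorem~\ref{thm:complete}), the hypotheses of the cited theorem are met.

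Concretely, the steps I would carry out, in order, are:
first, verify once more that all hypotheses of the theorem in \cite{MR1452735} are available, namely that $\rR_{\bell_n}$ is finite (which uses crucially that $\aA_n$ is finite and that the decreasing words $u = u_m\cdots u_1$ involved in $\rR_{\bell_n}$ are strictly decreasing, hence only finitely many over $\aA_n$) and that it is noetherian and confluent;
second, apply the cited theorem to obtain both left-$FP_\infty$ and right-$FP_\infty$ simultaneously (the cited result typically yields both sides at once, since the opposite monoid is presented by the reversed system, which is also finite and complete).

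I do not anticipate any genuine obstacle: the only subtle point is ensuring that the reference is indeed applicable to monoids in both its left- and right-module formulations, and that the presentation $(\aA_n,\rR_{\bell_n})$ is \emph{both} finite and complete simultaneously, not merely one or the other. This has already been established in Theorem~\ref{thm:complete}, so the corollary reduces to citing \cite{MR1452735} and observing that the hypotheses hold. Note that this argument is restricted to the \lps\ case; it does not extend to $\belr_n$ because $(\aA_n,\rR_{\belr_n})$ is complete but infinite, as shown by the proposition preceding Section~\ref{sec:growth}.
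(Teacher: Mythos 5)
Your proposal matches the paper's argument exactly: the paper derives this corollary by citing \cite{MR1452735} in combination with the finite complete presentation $(\aA_n,\rR_{\bell_n})$ established in Theorem~\ref{thm:complete}. Your additional checks (finiteness of $\rR_{\bell_n}$, completeness, and the remark that the argument does not extend to $\belr_n$) are all consistent with what the paper establishes, so there is nothing to correct.
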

%
%

\section{Automaticity and Biautomaticity}
\label{sec4}

\subsection{Preliminaries}

This subsection recall some basic definitions and necessary results from the theory of automatic and biautomatic
monoids. For further information on automatic semigroups, see \cite{MR1795250} and \cite{MR3283708}. We assume
familiarity with the theory of finite automata and regular languages, and also with the theory of transducers and
rational relations (see, for example, \cite{MR549481}).

An automatic structure for a monoid uses automata in order to compare pairs of words over an alphabet $\Sigma$. As these
pairs of words might not have the same length we have to use a padding symbol $\$$ to lengthen the shorter word to be
the same length as the longer. More precisely, let $\delta_R : \Sigma^* \times \Sigma^* \to ((\Sigma \cup \{\$\}) \times
(\Sigma \cup \{\$\}))^*$ be defined by:
	\begin{multline*}
	(u_1 \cdots u_m, v_1 \cdots v_n) \mapsto \\ \begin{cases} (u_1, v_1) \cdots
(u_m, v_n) &\mbox{if } m=n, \\
	(u_1, v_1)\cdots (u_n, v_n)(u_{n+1}, \$) \cdots (u_m, \$) &\mbox{if } m >
n,\\
	(u_1, v_1) \cdots (u_m, v_m)(\$, v_{m+1}) \cdots (\$, v_n) &\mbox{if } m <
n. \end{cases}
	\end{multline*}
	and let $\delta_L : \Sigma^* \times \Sigma^* \to ((\{\$\}\cup \Sigma) \times  (\{\$\} \cup \Sigma))^*$ be defined by
	\begin{multline*}
	(u_1 \cdots u_m, v_1 \cdots v_n) \mapsto\\ \begin{cases} (u_1, v_1) \cdots
(u_m, v_n) &\mbox{if } m=n, \\
	(u_1, \$) \cdots (u_{m-n}, \$)(u_{m-n+1}, v_1) \cdots (u_m, v_n) &\mbox{if
} m > n,\\
	(\$, v_1) \cdots (\$, v_{n-m})(u_1, v_{n-m+1}) \cdots (u_m, v_n) &\mbox{if
} m < n. \end{cases}
	\end{multline*}
where $u_i,v_i\in \Sigma$.

Let $M$ be a monoid, $\Sigma$ a finite generating set and let $L \subseteq
\Sigma^*$ a regular language such that every element of $M$ has at least one representative in
$L$.  Define the relations
\begin{align*}
	&L_a = \{(u,v): u,v\in L,\, ua =_M v\},\\
	&_aL = \{(u,v): u,v\in L,\, au =_M v\}.
\end{align*}
We say that $(\Sigma, L)$ is an \emph{automatic structure for} $M$ if $\left(L_a\right)\delta_R$ is a regular language
over $(\Sigma\cup \{\$\}) \times (\Sigma \cup\{\$\})$ for all $a \in \Sigma \cup \{\varepsilon\}$. A monoid is
\emph{automatic} if it admits an automatic structure with respect to some generating set.

We say that the pair $(\Sigma,L)$ is a \emph{biautomatic structure for} $M$ if $\left(L_a\right)\delta_R$,
$\left(_aL\right)\delta_R$, $\left(L_a\right)\delta_L$, and $\left(_aL\right)\delta_L$ are all regular languages over
$(\Sigma\cup\{\$\})\times(\Sigma\cup \{\$\})$ for all $a\in \Sigma\cup \{\varepsilon\}$ . A monoid $M$ is
\emph{biautomatic} if it admits a biautomatic structure with respect to some generating set. Clearly,
biautomaticity implies automaticity.

In order to prove that a relation $R\delta_R$ or $R\delta_L$ is regular, the following result, which is a
combination of \cite[Corollary 2.5]{MR1203822} and \cite[Proposition 4]{MR2194113}, will be useful:

\begin{prop}
  \label{prop1.9}
  If $R \subseteq \Sigma^* \times \Sigma^*$ is rational relation and there is a constant $k$ such that
  $\abs{ \abs{u} - \abs{v}}\leq k$ for all $(u, v) \in R$, then $\left(R\right)\delta_R$ and $\left(R\right)\delta_L$
  are regular.
\end{prop}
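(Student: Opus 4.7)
The plan is to exploit the classical characterisation of rational relations on $\Sigma^* \times \Sigma^*$ via finite transducers. Since $R$ is rational, it is recognised by a finite two-tape nondeterministic automaton $T = (Q, \Delta, q_0, F)$ whose transitions in $\Delta$ are labelled by pairs $(a, b) \in (\Sigma \cup \{\varepsilon\})^2$; we may assume the normal form in which at most one of $a$ or $b$ is non-empty on each transition. The hypothesis that $\abs{\abs{u}-\abs{v}} \leq k$ for all $(u,v) \in R$ is the crucial hook: it will allow both tapes to be read in approximate lock-step.

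First I would establish that $T$ may be replaced by an equivalent transducer $T'$ in which, at every intermediate step of every accepting run, the imbalance between the number of $\Sigma$-symbols already read on the two tapes is bounded by a constant $N$ depending only on $\abs{Q}$ and $k$. The argument is a pumping observation: if the imbalance at some configuration of an accepting run exceeds $\abs{Q} + k$, then during the subsequent catching-up phase some state of $T$ must be revisited while the imbalance still exceeds $k$, producing a cycle whose labels contribute net length to only one side. Pumping this cycle out of the run strictly decreases the maximal imbalance without altering the accepted pair, and iterating yields the bound $N$.

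Given such a $T'$, I would build an NFA $A$ over $(\Sigma \cup \{\$\}) \times (\Sigma \cup \{\$\})$ for $(R)\delta_R$ whose states are pairs $(q, \beta)$ with $q \in Q$ and $\beta$ a buffer of length at most $N$ recording the side currently ahead together with its unmatched symbols. Each transition of $A$ consumes one column of the $\delta_R$-encoded input and either appends to, empties, or matches against the buffer while simulating transitions of $T'$; once $T'$ reaches a final state, the remaining buffered symbols are flushed against the trailing $\$$-columns of the encoding. Accepting runs of $A$ then correspond exactly to accepting runs of $T'$ on pairs in $R$, so $(R)\delta_R$ is regular. The case of $(R)\delta_L$ follows either by a symmetric construction with padding at the front, or by observing that the $\$$-block distinguishing $\delta_R$ from $\delta_L$ has length at most $k$ and can be transferred from one end of the encoded word to the other by a finite-state transformation applied to $A$.

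The main obstacle is the first step: making precise the bound $N$ in terms of $\abs{Q}$ and $k$, and verifying that the cycle excised in each pumping iteration can always be chosen so that the shortened run is still accepting and yields the same pair $(u,v) \in R$. Once that lemma is in hand, the automaton construction is routine bookkeeping.
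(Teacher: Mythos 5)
The overall architecture you propose --- a normal-form two-tape transducer, a bound on the read-head imbalance along accepting runs, and then a product automaton with a bounded buffer reading the padded convolution --- is exactly the standard route to this result (it is Frougny and Sakarovitch's theorem that a rational relation of bounded length difference is synchronized; note that the paper does not prove the proposition itself but imports it from two references). Your buffer construction and the reduction of $\delta_L$ to $\delta_R$ are indeed routine once bounded imbalance is in hand.

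However, the argument you give for the key lemma is wrong. You locate, inside an accepting run of large imbalance, a cycle whose labels contribute net length to only one side, and then claim that pumping this cycle \emph{out} ``strictly decreases the maximal imbalance without altering the accepted pair.'' It cannot do both: a cycle with nonzero net height reads at least one letter on at least one tape, so excising it deletes a nonempty factor from $u$ or from $v$ and necessarily changes the accepted pair. The shortened run accepts some \emph{other} element of $R$, which says nothing about the pair you started with. The correct argument is the dual one: if a cycle lying on an accepting run (hence consisting of trim states) had nonzero net height, then pumping it \emph{up} $t$ times would stay accepting and produce pairs of $R$ whose length difference grows linearly in $t$, contradicting the hypothesis $\abs{\abs{u}-\abs{v}}\leq k$. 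Hence every cycle through trim states is balanced, and therefore every prefix of every accepting run --- a simple path plus balanced cycles --- has imbalance less than $\abs{Q}$. With this fix no replacement transducer $T'$ is needed at all: any trim normal-form transducer for $R$ already has bounded imbalance, and the remainder of your construction goes through.
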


\subsection{Proving automaticity for the \rps\ monoids of finite rank}

Our goal in this subsection is to prove that the monoid $\belr_n$ is automatic for any $n\in \mathbb{N}$. The first step
will be to construct a regular language $L\subseteq \aA^*_n$ which maps surjectively (and in fact bijectively) onto
$\belr_n$. We then prove that $\left(L_{a}\right)\delta_R$ is a regular language for every
$a\in\aA_n\cup \{\varepsilon\}$ and thus show that $(\aA_n^*,L)$ is an automatic structure for $\belr_n$.  However, as
we will see, $(\aA_n^*, L)$ is only a biautomatic structure for this monoid when $n=1$. We finish the subsection
constructing a biautomatic structure for $\belr_2$.

Consider the following languages over $\aA_n$:
\begin{alignat*}{2}
	L^{(1)}&=\{n\}^*\{n-1\}^*\cdots\{2\}^*\{1\}^+\\
	L^{(2)}&=\{n\}^*\{n-1\}^*\cdots\{3\}^*\{2\}^+\\
	&\qquad\vdots\\
	L^{(n-1)}&=\{n\}^*\{n-1\}^+\\
	L^{(n)}&=\{n\}^+;
\end{alignat*}
it is immediate that these are regular languages.

For any $B\subseteq \aA_n$, with $B=\{a_1<\ldots<a_k\} \neq \emptyset$, define
\begin{equation*}
	L^B=\prod_{a_i\in B}^{\rightarrow}L^{(a_i)} = L^{(a_1)}\cdots L^{(a_k)};
\end{equation*}
this is a finite product of regular languages and thus is itself regular. Define $L^\emptyset = \{\varepsilon\}$.

Furthermore, since finite unions of regular languages are also regular,
the language
\begin{equation*}
L=\bigcup_{B\subseteq \aA_n}L^{B}
\end{equation*}
is also regular.

\begin{prop}
  \label{bijectivemap}
  For any $n\in \mathbb{N}$, the language $L\subseteq \aA^*_n$ maps bijectively onto $\belr_n$.
\end{prop}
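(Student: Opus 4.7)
The plan is to identify the language $L$ explicitly with the set of \rps\ canonical words, and then to invoke Proposition~\ref{prop2.3} together with the identification of \rps\ classes with canonical words established in Subsection~\ref{subsect1.2} to conclude bijectivity.

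First I would analyse the building blocks. For each $a \in \aA_n$, the language $L^{(a)} = \{n\}^*\{n-1\}^*\cdots\{a+1\}^*\{a\}^+$ consists of the weakly decreasing words over $\{a, a+1, \ldots, n\}$ whose rightmost letter is $a$. By the definition of \rps\ tableaux (strictly decreasing columns read top to bottom means weakly decreasing when written as a column word in the sense of $\cC$, and actually the \rps\ columns are weakly decreasing), these are precisely the \rps\ column words with bottom (rightmost) symbol $a$. Hence every non-empty \rps\ column word lies in exactly one $L^{(a)}$, namely the one indexed by its smallest entry.

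Next I would handle the concatenation. Fix $B = \{a_1 < a_2 < \cdots < a_k\} \subseteq \aA_n$ and take any word $w \in L^B = L^{(a_1)} L^{(a_2)} \cdots L^{(a_k)}$. Writing $w = c_1 c_2 \cdots c_k$ with $c_i \in L^{(a_i)}$, each $c_i$ is an \rps\ column word with bottom symbol $a_i$; since $a_1 < a_2 < \cdots < a_k$, the sequence of these bottom symbols is an \rps\ bottom row word, so $(c_1, \ldots, c_k)$ is the \rps\ column configuration of an \rps\ canonical word. Conversely, any \rps\ canonical word whose bottom row word has underlying set exactly $B$ decomposes uniquely into columns $c_1, \ldots, c_k$ with $c_i \in L^{(a_i)}$, placing it in $L^B$. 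Therefore $L^B$ is exactly the set of \rps\ canonical words whose bottom row has underlying set $B$.

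Finally I would assemble these. Since distinct subsets $B$ yield distinct bottom row words, the languages $L^B$ are pairwise disjoint, and $L = \bigcup_{B \subseteq \aA_n} L^B$ is exactly the set of all \rps\ canonical words in $\aA_n^*$ (including $\varepsilon$ via $B = \emptyset$). By Proposition~\ref{prop2.3} the map $w \mapsto \bB_r(w)$ is a bijection between \rps\ canonical words and \rps\ tableaux, and by Subsection~\ref{subsec:Bell_congruence} each $\belrcongn$-class contains a unique \rps\ canonical word (the column reading of its tableau). Composing these bijections shows that the natural map $L \to \belr_n$ sending a word to the element it represents is a bijection, which is the desired conclusion. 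The argument is essentially a bookkeeping exercise once the correspondence $L^{(a)} \leftrightarrow \{\text{\rps\ columns with bottom } a\}$ is observed; the only mildly subtle point is checking that the concatenation of the $L^{(a_i)}$ in increasing order of $a_i$ produces precisely the canonical (as opposed to merely valid) column configurations, which is where the strict increase of bottom row words in the \rps\ case is used.
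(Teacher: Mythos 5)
Your proposal is correct and follows essentially the same route as the paper: identify each $L^{(a)}$ with the \rps\ column words having bottom symbol $a$, hence $L^B$ with the canonical words whose bottom row is $B$, note the $L^B$ are pairwise disjoint, and conclude via the bijection between canonical words and \rps\ tableaux (Proposition~\ref{prop2.3}). You simply spell out in more detail the steps the paper compresses into three sentences.
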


\begin{proof}
  First note that $L^{(a_i)}$ is the set of (non-empty) weakly decreasing words ending in $a_i$, or equivalently the set
  of readings of \rps\ columns whose bottom symbol is $a_i$. Thus $L^B$ (for $B \subseteq \aA_n$) is the set of readings
  of \rps\ tableaux whose bottom row consists of the symbols in $B$ (in increasing order). Since the sets $L^B$ are
  pairwise disjoint, it follows that $L$ is in one-to-one correspondence with the set of \rps\ tableaux. Hence $L$ maps
  bijectively onto $\belr_n$.
\end{proof}

It remains to prove that for any $i\in \aA_n\cup \{\varepsilon\}$, the language $\left(L_i\right)\delta_R$ is regular.
We will first prove that the relation $L_i$ is rational. The first step is to construct some auxiliary relations.  For any
$j\in \aA_n$, let
\[
	L^{(j,\varepsilon)} = \{(u,u):u\in L^{(j)}\},
\]
and for any $i,j\in \aA_n$ such that $i\leq j$, let
\begin{equation*}
L^{(j,i)}=L^{(j,\varepsilon)} \{(\varepsilon,i)\}=\{(u,u\cdot i):u\in L^{(j)}\}.
\end{equation*}
It is immediate from the definitions that for any $j\in \aA_n$, the relation $L^{(j,\varepsilon)}$ is rational, and that
for any $i,j\in \aA_n$ with $i\leq j$, the relation $L^{(j,i)}$ is rational. Clearly, $(u,v) \in L^{(j,i)}$ if and only if
$u$ is the reading of an \rps\ column whose bottom symbol is $j$, and $v$ is the reading of the column resulting from
right-inserting a symbol $i \leq j$ (which will be inserted at the bottom of this column).

Let $j\in \aA_n$ and let $B \subseteq \aA_n$. If $j$ is greater than every symbol in $B$ (which, in particular, holds if
$B = \emptyset$), then define
\begin{align*}
	L^{(B,j)}=\left(\prod_{a_i\in B}^{\rightarrow}L^{(a_i,\varepsilon)}\right) \{(\varepsilon,j)\}.
\end{align*}
In this case, $(u,v) \in L^{(B,j)}_j$ if and only if $u$ is the column reading of an \rps\ tableau whose bottommost row
contains the symbols in $B$ (in increasing order) and $v$ is the column reading of the tableau that arises from
right-inserting the symbol $j$ (which is greater than every symbol in $B$ and is thus appended to the right side of the
bottommost row).

Otherwise $j$ is less than or equal to some symbol in $B$. Suppose $B=\{a_1<\ldots<a_k\}$ and let
$a_m=\min\{a_i\in B:a_i\geq j\}$. Partition $B$ as $B=C\cup \{a_m\}\cup E$, where $j>a$ for all $a\in C$ and $j<a$ for all $a\in
E$. Now define
\begin{align*}
L^{(B,j)}=\left(\prod_{a_i\in C}^{\rightarrow}L^{(a_i,\varepsilon)}\right) L^{(a_m,j)}\left(\prod_{a_i\in E}^{\rightarrow}L^{(a_i,\varepsilon)}\right).
\end{align*}
In this case, $(u,v) \in L^{(B,j)}_j$ if and only if $u$ is the column reading of an \rps\ tableau whose bottommost row
contains the symbols in $B$ (in increasing order) and $v$ is the column reading of the tableau that arises from
right-inserting the symbol $j$ (which is inserted at the bottom of the column ending in $a_m$).

Combining the last two paragraphs, and noting that the sets $L^{(B,j)}$ are disjoint, we see that
\[
  L_j=\bigcup_{B\subseteq\aA_n} L^{(B,j)}.
\]
Since each $L^{(B,j)}$ is a concatenation of rational relations, it too is a rational relation. Hence $L_j$ is a
rational relation.

Finally, note that
\[
  L_\varepsilon = \{(u,u):u \in L\},
\]
by Proposition~\ref{bijectivemap} and is thus a rational relation.

For any $j \in \aA_n \cup \{\varepsilon\}$, if $(u,v) \in L_j$ then $\bigl||u|-|v|\bigr| \leq 1$, since insertion into a
tableau increases the length of the column reading by $1$. Hence $L_j\delta_R$ is a regular language by
Proposition~\ref{prop1.9}. Thus,

\begin{thm}
	For any $n\in \mathbb{N}$, $(\aA_n,L)$ is an automatic structure for $\belr_n$.
\end{thm}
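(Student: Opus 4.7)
The plan is to assemble the results already proved before the theorem statement. By Proposition~\ref{bijectivemap}, the language $L \subseteq \aA_n^*$ is regular and maps bijectively (hence surjectively) onto $\belr_n$, so $L$ supplies at least one representative per element, which is the first requirement for an automatic structure. It then remains to check that $(L_a)\delta_R$ is regular for every $a \in \aA_n \cup \{\varepsilon\}$.

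For the case $a=\varepsilon$, the bijection from $L$ to $\belr_n$ forces $L_\varepsilon = \{(u,u) : u \in L\}$. This is clearly a rational relation (it is recognised by a single-state transducer reading each letter of $L$ on both tapes), and since $|u|-|v|=0$ on $L_\varepsilon$, Proposition~\ref{prop1.9} gives that $(L_\varepsilon)\delta_R$ is regular.

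For the case $a = j \in \aA_n$, I would invoke the decomposition
\[
  L_j = \bigcup_{B \subseteq \aA_n} L^{(B,j)}
\]
established in the paragraphs preceding the theorem. Each $L^{(B,j)}$ is built as a finite concatenation of the rational relations $L^{(a_i,\varepsilon)}$ together with at most one factor $L^{(a_m,j)}$ (or a terminal $\{(\varepsilon,j)\}$), so each $L^{(B,j)}$ is rational; and a finite union of rational relations is rational, so $L_j$ is rational. Moreover, right-inserting a single symbol into an \rps\ tableau changes the length of the column reading by exactly $1$, so $\bigl||u|-|v|\bigr| \leq 1$ whenever $(u,v) \in L_j$. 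Applying Proposition~\ref{prop1.9} one more time yields that $(L_j)\delta_R$ is regular.

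This covers every $a\in \aA_n \cup \{\varepsilon\}$, so $(\aA_n, L)$ is an automatic structure for $\belr_n$. There is no real obstacle here; the substantive technical work has already been done in constructing the auxiliary rational relations $L^{(j,\varepsilon)}$, $L^{(j,i)}$, $L^{(B,j)}$ and in verifying the bounded length difference, so the theorem is essentially a clean packaging step. The only thing to be careful about is to point out explicitly that the bijectivity from Proposition~\ref{bijectivemap} is what lets us describe $L_\varepsilon$ so simply, since without it there would be no reason for $(u,u)$ to be the only pairs in $L \times L$ representing the same element.
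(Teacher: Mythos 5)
Your proposal is correct and follows essentially the same route as the paper, which establishes the theorem in the paragraphs immediately preceding it: the decomposition $L_j=\bigcup_{B\subseteq\aA_n}L^{(B,j)}$ into concatenations of the auxiliary rational relations, the identification $L_\varepsilon=\{(u,u):u\in L\}$ via Proposition~\ref{bijectivemap}, and the length bound $\bigl||u|-|v|\bigr|\leq 1$ feeding into Proposition~\ref{prop1.9}. Nothing is missing.
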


For $n=1$, the languages
\begin{align*}
  (_1L)\delta_R &=\{(1,1)\}^*\{(\$,1)\}, \\
  (L_1)\delta_L &=\{(\$,1)\}\{(1,1)\}^*, \\
  (_1L)\delta_L &=\{(\$,1)\}\{(1,1)\}^*,
\end{align*}
are all regular, and so $(\aA_1,L)$ is a biautomatic structure for $\belr_1$ and thus $\belr_1$ is biautomatic. However,
$(\aA_n,L)$ is not in general a biautomatic structure for $\belr_n$:

\begin{prop}
	For any $n\in \mathbb{N}$ with $n\geq 2$, the language $\left(_1L
	\right)\delta_R$ is not regular.
\end{prop}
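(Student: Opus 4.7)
The plan is to prove non-regularity by exhibiting an explicit infinite family of pairs in ${}_1L$ and then intersecting $({}_1L)\delta_R$ with a well-chosen regular language $R$. Since the regular languages over $(\aA_n\cup\{\$\})\times(\aA_n\cup\{\$\})$ are closed under intersection, showing that $({}_1L)\delta_R\cap R$ is not regular will suffice.

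First I would analyse left-multiplication by $1$ on the family $u_k=2^k1^k$, which lies in $L^{(1)}\subseteq L$ for every $n\geq 2$. Running Algorithm~\ref{alg:Bellword} on $1u_k=1\cdot 2^k1^k$ produces a two-column \rps\ tableau: the initial $1$ seeds the left column, the ensuing $k$ copies of $2$ create and then successively bump into a second column of height $k$, and the final $k$ copies of $1$ bump into (stack above) the left column. Hence $\bB_r(1u_k)$ has canonical word $v_k=1^{k+1}2^k\in L^{\{1,2\}}\subseteq L$, so $(u_k,v_k)\in{}_1L$ and
\[
\delta_R(u_k,v_k)\;=\;(2,1)^k\,(1,1)\,(1,2)^{k-1}\,(\$,2).
\]

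Next I would take $R=\{(2,1)\}^*\{(1,1)\}\{(1,2)\}^*\{(\$,2)\}$, which is regular, and claim that
\[
({}_1L)\delta_R\cap R\;=\;\{(2,1)^k(1,1)(1,2)^{k-1}(\$,2):k\geq 1\}.
\]
The containment $\supseteq$ is the preceding computation. For $\subseteq$, note that $\delta_R(u,v)\in R$ forces the shape $u=2^p1^{q+1}$ and $v=1^{p+1}2^{q+1}$ for some $p,q\geq 0$. Since $L$ contains a unique representative of each $\belr_n$-class (Proposition~\ref{bijectivemap}), $(u,v)\in{}_1L$ requires $v$ to equal the canonical form of $1u$; the same tableau computation as above (specialised to having no trailing $2$'s) gives this form as $1^{q+2}2^p$ when $p\geq 1$ and as $1^{q+2}$ when $p=0$. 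Matching $v=1^{p+1}2^{q+1}$ against $1^{q+2}2^p$ forces $p=q+1\geq 1$, recovering exactly the family $(u_k,v_k)$.

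Finally the intersection is in evident bijection with the language $\{a^kb^{k-1}:k\geq 1\}$, which is non-regular by a standard pumping argument; hence $({}_1L)\delta_R$ is not regular. The delicate step is the inclusion $\subseteq$: the shape constraint from $R$ on $u$ and $v$ is immediate, but one must then carry out the tableau computation determining the canonical form of $1u$ for $u$ of the form $2^p1^{q+1}$ and compare with the prescribed form of $v$, in order to rule out all spurious pairs that might otherwise contaminate the intersection.
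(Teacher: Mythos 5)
Your proof is correct, and it reaches the conclusion by a genuinely different route than the paper, although both arguments hinge on the same witness family: the computation $\bB_r(1\cdot 2^k1^k)=1^{k+1}2^k$, giving the pairs $(2^k1^k,\,1^{k+1}2^k)\in{}_1L$. The paper runs a pumping argument directly on a hypothetical automaton for $({}_1L)\delta_R$: it pumps a block of $(2,1)$-pairs inside $\delta_R(2^\alpha 1^\alpha, 1^{\alpha+1}2^\alpha)$ and observes that the resulting pair would force $1\cdot 2^{\alpha+\gamma}1^\alpha\belrcongn 1^{\alpha+\gamma+1}2^\alpha$, contradicting multihomogeneity of $\belr_n$. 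This avoids having to classify all members of $({}_1L)\delta_R$ of a given shape, but it pays for that by having to track the synchronization of the two tapes under pumping. You instead intersect with the regular language $R=\{(2,1)\}^*\{(1,1)\}\{(1,2)\}^*\{(\$,2)\}$ and fully characterize the intersection as $\{(2,1)^k(1,1)(1,2)^{k-1}(\$,2):k\geq 1\}$; the extra work is precisely the inclusion $\subseteq$, which you correctly identify as the delicate step and which you discharge by computing the canonical form of $1\cdot 2^p1^{q+1}$ and invoking the bijectivity of $L$ onto $\belr_n$ (so that the unique admissible $v$ is the column reading of $\bB_r(1u)$). Your approach buys a cleaner endgame --- reduction to the textbook non-regular language $\{a^kb^{k-1}\}$ --- and needs no appeal to multihomogeneity. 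One small point of rigor: ``in evident bijection with $\{a^kb^{k-1}\}$'' should really be ``is the preimage under / image of an alphabetic homomorphism of'' (send $(2,1)\mapsto a$, $(1,2)\mapsto b$, and $(1,1),(\$,2)\mapsto\varepsilon$), or simply pump the intersection language directly; a mere bijection does not in general preserve regularity. This is cosmetic, not a gap.
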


\begin{proof}
  Let $n\in \mathbb{N}$ with $n\geq 2$. Suppose, with the aim of obtaining a contradiction, that
  \[
    \left(_1L\right)\delta_R = \{(u,v)\delta_R:u,w\in L\wedge 1u\belrcongn v\}
  \]
  is a regular language over $\left(\aA_n\cup \{\$\}\right)
  \times \left(\aA_n\cup \{\$\}\right)$. Let
  $N$ be the number of states in a finite automaton $\mathfrak{A}$ recognizing $(_1L)\delta_R$ and fix $\alpha>N$.

  For each $\beta\in \mathbb{N}$, let $u_\beta$ and $v_\beta$ be representatives of the elements of $\belr_n$ whose
  column readings are $2^\beta1^\beta$ and $1^{\beta+1}2^\beta$, respectively. Since these words are both in $L$, we
  conclude by Proposition~\ref{bijectivemap} that $u_\beta=2^\beta1^\beta$ and $v_\beta=1^{\beta+1}2^\beta$. It is
  straightforward to see that
  \[
    (u_\beta,v_\beta)\delta_R=(2^\beta1^\beta,1^{\beta+1}2^\beta)\delta_R\in
    (_1L)\delta_R.
  \]

  Since $\alpha>N$, when $\mathfrak{A}$ reads $(u_\alpha,v_\alpha)\delta_R$, it enters the same
  state after reading two different symbols $2$ of $u_\alpha$. Let $u'$ and $u'u''$ be the prefixes of $u_\alpha$
  up to and including these symbols $2$; note that $|u''| > 0$. Then $u'=2^\eta$ and $u'u''=2^\eta 2^\gamma$ for
  some $\eta, \gamma\in\mathbb{N}$ with $\gamma > 0$. Let $v'$ and $v'v''$ be the prefixes of $v_\alpha$ of
  lengths $\eta$ and $\gamma$, respectively; thus $v'=1^\eta$, $v'v''=1^\eta 1^\gamma$. Pumping
  $(u'',v'')\delta_R$ shows that
  \[
    (2^\eta 2^{2\gamma}2^{\alpha-\eta-\gamma}1^\alpha,1^\eta 1^{2\gamma}1^{\alpha+1-\eta-\gamma}2^\alpha)\delta_R \in (_1L)\delta_R.
  \]
  Hence $1\cdot 2^{\alpha+\gamma}1^\alpha \belrcongn 1^{\alpha+\gamma+1}2^\alpha$.  As $\gamma>0$, the number of
  symbols $1$ on the left side of the equality is smaller than the number of symbols $1$ on the right side of the
  equality. This is a contradiction since $\belr_n$ is multihomogeneous.
\end{proof}

We can also show that $\belr_2$ is biautomatic as follows: let
\begin{align*}
	J&=\{2^i12^k1^{j-1}: i,k\in\mathbb{N}_0, j\in \mathbb{N}\}\cup \{2^i:i\in \mathbb{N}_0\}\\
	 &=\{2\}^*\{1\}\{2\}^*\{1\}^*\cup\{2\}^*;
\end{align*}
note that $J$ is regular.

The map $\varphi:J\to\belr_2$, defined by
\[\varphi:x\mapsto
\begin{cases}
2^i1^j2^k  &\text{if $x=2^i12^k1^{j-1}$}\\	2^i  &\text{if $x=2^i$}
\end{cases}\]
is a bijection between $J$ and $\belr_2$ (viewed as column readings of tableaux). Thus, $J$ is a regular language
that maps bijectively onto $\belr_2$. Since
\begin{align*}
	J_1&=\{(2,2)\}^* \{(1,1)\}\{(2,2)\}^*\{(1,1)\}^*\{(\varepsilon,1)\} \cup
	\{(2,2)\}^*\{(\varepsilon,1)\}\\
	J_2&=\{(2,2)\}^* \{(1,1)\}\{(2,2)\}^*\{(\varepsilon,2)\}\{(1,1)\}^* \cup\{(2,2)\}^*\{(\varepsilon,2)\}\\
	_1J&=\{(\varepsilon,1)\}\{(2,2)\}^*\{(1,\varepsilon)\} \{(2,2)\}^*\{(1,1)\}^*\{(\varepsilon,1)\}\cup\{(\varepsilon,2)\}\{(2,2)\}^*\\
	_2J&=\{(\varepsilon,2)\}\{(2,2)\}^*\{(1,1)\}\{(2,2)\}^*\{(1,1)\}^*
	\{(\varepsilon,2)\}\{(2,2)\}^*
\end{align*}
we deduce that $(_iJ)\delta_R$, $(_iJ)\delta_L$, $(J_i)\delta_R$ and $(J_i)\delta_L$,
for $i\in\{1,2\}$ are regular languages. Therefore we have the following result:

\begin{prop}
	The pair $(\aA_2,J)$ is a biautomatic structure for $\belr_2$.
\end{prop}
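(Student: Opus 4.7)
The goal is to verify, for every $a \in \{1,2,\varepsilon\}$, that the four languages $(J_a)\delta_R$, $(J_a)\delta_L$, $({}_aJ)\delta_R$, $({}_aJ)\delta_L$ are regular over $(\aA_2 \cup \{\$\}) \times (\aA_2 \cup \{\$\})$. The preceding discussion already exhibits $J$ as a regular subset of $\aA_2^*$ in bijection with $\belr_2$ via $\varphi$, and writes down rational expressions (finite unions of concatenations of singleton pairs and Kleene stars of pairs) for $J_1$, $J_2$, ${}_1J$, ${}_2J$. These expressions manifestly describe rational relations, so the heart of the argument is the combinatorial verification that the regular expressions correctly match the products $u \cdot a$ and $a \cdot u$ in $\belr_2$.

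My plan is to validate each rational expression by a direct case analysis on the two possible forms of $u \in J$, namely $u = 2^i$ and $u = 2^i \cdot 1 \cdot 2^k \cdot 1^{j-1}$. For right multiplication ($J_1$ and $J_2$), I would apply Algorithm~\ref{alg:Bell} to $\bB_r(\varphi(u))$ and read off $\varphi^{-1}$ of the result: for $a=1$ the new symbol bumps the column whose bottom is $1$, so the only effect on the representative in $J$ is to append one more $1$ at the end; for $a=2$ the new symbol either bumps the $2$-column (when a second column exists) or creates a new one, corresponding in each subcase to inserting a single $2$ at the appropriate position of $u$. For left multiplication (${}_1J$ and ${}_2J$), I would use the left-insertion procedure of Subsection~\ref{subsection1.3} (or recompute the canonical word of $a \cdot \varphi(u)$ from scratch using Lemma~\ref{lem:left_bell_insertion}): for $a=2$ the effect is simply to prepend a $2$ to $u$; for $a=1$ the tableau is reshaped to canonical form $1^{j+1} 2^{i+k}$, whose $\varphi$-preimage is $1 \cdot 2^{i+k} \cdot 1^{j}$, which is what the rational expression for ${}_1J$ produces. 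In each subcase the resulting pair $(u,v)$ lies in exactly one summand of the claimed expression, establishing the equality of relations.

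Once the four rational expressions are validated, Proposition~\ref{prop1.9} applies immediately: multiplication by a single generator changes the number of entries of the \rps\ tableau by exactly one, and a direct check shows $\varphi$ preserves word length, so every pair $(u,v)$ in $J_a$ and ${}_aJ$ satisfies $\bigl||u|-|v|\bigr| = 1$. In the degenerate case $a = \varepsilon$, the bijectivity of $\varphi$ gives $J_\varepsilon = {}_\varepsilon J = \{(u,u) : u \in J\}$, whose $\delta_R$- and $\delta_L$-images are regular because $J$ is. Altogether these verifications establish that $(\aA_2, J)$ is a biautomatic structure for $\belr_2$.

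The main technical obstacle is the analysis of ${}_1J$: left-multiplying by $1$ causes a global rearrangement of $\varphi(u) = 2^i 1^j 2^k$ into $1^{j+1} 2^{i+k}$, merging the initial $2$-block with the middle one, so that the prefix $2^i$ of $u$ does not appear as a prefix of $v$. Expressing this non-local reshuffling as a concatenation of singleton and starred pairs, and carefully handling the boundary subcases $i = 0$, $k = 0$, and $u = 2^i$ (where $u$ has no occurrence of $1$ at all), is where the combinatorial bookkeeping requires the most care.
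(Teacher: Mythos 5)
Your proposal takes essentially the same route as the paper: the paper likewise exhibits the regular language $J$ and the length-preserving bijection $\varphi$, writes down explicit rational expressions for $J_1$, $J_2$, ${}_1J$, ${}_2J$, and concludes via Proposition~\ref{prop1.9} from the bound $\bigl||u|-|v|\bigr|\leq 1$; your case analysis on the two forms $2^i$ and $2^i 1 2^k 1^{j-1}$ is precisely the verification of those expressions that the paper leaves implicit, and your computed effects of each insertion (appending a $1$, inserting a single $2$, prepending a $2$, and the global reshuffle $1\cdot 2^{i+k}\cdot 1^{j}$ for left insertion of $1$) are all correct. Carrying out your verification literally would in fact catch two typos in the displayed relations: the second summand of ${}_1J$ should be $\{(\varepsilon,1)\}\{(2,2)\}^*$ rather than $\{(\varepsilon,2)\}\{(2,2)\}^*$, and ${}_2J$ is missing a union symbol before its final two factors.
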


The idea of the biautomatic structure for $\belr_2$ does not seem to generalize to higher ranks, so the following question
remains open:

\begin{qst}
	For each $n\geq 3$, is $\belr_n$ biautomatic?
\end{qst}

\subsection{Proving biautomaticity for the \lps\ monoids of finite rank}

\subsubsection{The language of representatives}

In order to prove that finite-rank \lps\ monoids are biautomatic, we will first work with a language $K$ defined over a set of generators different from $\aA_n$, and then later switch back to $\aA_n$.

Define the alphabet
\begin{align*}
	\eE_n=\{e_{\alpha}:\ \alpha=\alpha_n\cdots\alpha_1\in \aA_n^+,\
\alpha_1,\ldots,\alpha_n\in \aA_n,\ \alpha_1<\ldots<\alpha_n\}.
\end{align*}
The idea is that each \lps\ column word $\alpha$ of $\bell_n$ is represented by a single \lps\ column $e_\alpha$. Note
that since $\aA_n$ is finite and the subscript of each letter $e_\alpha$ is a strictly decreasing word, $\eE_n$ is finite.

For any $e_\alpha \in \eE_n$, let $\alpha_1$ denote the rightmost (that is, smallest) symbol in $\alpha$.  Define the
following language of representatives over $\eE_n$,
\begin{align*}
	&K=\{e_{\alpha^1}e_{\alpha^2}\cdots e_{\alpha^k}:k\in \mathbb{N}_0,
	e_{\alpha^i}\in \eE_n, \alpha_1^i\leq\alpha_1^{i+1}\text{ for all $i$}\}
\end{align*}
Notice that $e_{\alpha^1}e_{\alpha^2}\cdots e_{\alpha^k}\in K$ if and only if ${\alpha^1}{\alpha^2}\cdots{\alpha^k}$ is
the \lps\ column decomposition of the corresponding \lps\ tableau, or in other words,
${\alpha^1}{\alpha^2}\cdots {\alpha^k}=\cC(\bB_\ell({\alpha^1}{\alpha^2}\cdots {\alpha^k}))$. Thus $K$ maps bijectively to
$\bell_n$.

Observe also that in order to check that $\alpha_1^i\leq \alpha_1^{i+1}$, the automaton needs only to store the
previously read symbol. So $K$ is a regular language.

\subsubsection{Right multiplication by transducer}

We will now prove that, for any $\gamma \in \aA_n$, the relation
\begin{align*}
	&K_{e_\gamma}=\{(e_{\alpha^1}\cdots e_{\alpha^k},e_{\beta^1}\cdots
	e_{\beta^l})\in K\times K: e_{\alpha^1}\cdots e_{\alpha^k}e_{\gamma} \bellcongn e_{\beta^1}\cdots e_{\beta^l}\},
\end{align*}
is a rational relation. In order to do so, we will prove that $K_{e_\gamma}$ can be recognized by a transducer that
reads right-to-left; this will be enough to prove that $K_{e_\gamma}$ is rational since the class of rational relations
is closed under reversal \cite[pp.~65--66]{MR549481}.

We will also make use of non-determinism, in that our transducer will be able to guess the next symbol to be read. This
symbol is stored in the state of the transducer so that later it can be compared with next symbol when it is read. Then,
if the guess was correct, the transducer continues, otherwise it enters a failure state.

Thus the idea is that the transducer will read a pair of words
\begin{align*}
	&(e_{\alpha^1}e_{\alpha^2}\cdots e_{\alpha^k},e_{\beta^1}e_{\beta^2}\cdots
e_{\beta^l})\in K\times K
\end{align*}
from right to left, with the aim of checking whether the pair is in $K_{e_\gamma}$ or not.  We imagine the
transducer as reading symbols from the top tape and outputting symbols on the bottom tape. Essentially, the transducer
will perform the insertion algorithm using the alphabet $\eE_n$ as a column representation of the tableau.

The transducer stores either the symbol $e_\gamma$ or the symbol $\infty$. It non-deterministically guesses the next
symbol to be read (that is, one symbol to the left of the current symbol). Initially it stores the symbol $e_\gamma$;
the idea is that when the stored symbol is $e_\gamma$, the transducer is searching for the correct column in which to
insert $e_\gamma$. Following Algorithm~\ref{alg:Bell}, the transducer knows that it has found the correct column,
$\alpha^i$ (represented by $e_{\alpha^i}$) in which to insert $e_\gamma$ if the last symbol of $\alpha^i$ is strictly
greater than $\gamma$ and the last symbol of $\alpha^{i-1}$ (which it knows since it has non-deterministically guessed
$e_{\alpha^{i-1}}$) is less or equal than $\gamma$.  When the stored symbol is $\infty$, the transducer has completed
the insertion and simply reads the remainder symbols (which were not read yet) from the input tape and writes them on
the output tape.

Initially the transducer stores $e_\gamma$ and non-deterministically knows $e_{\alpha^k}$. If the last symbol of $\alpha^k$ is
less than or equal to $\gamma$ the transducer outputs $e_\gamma$ before reading any symbol and stores $\infty$.

When reading a symbol $e_{\alpha^i}$, the transducer non-deterministically knows $e_{\alpha^{i-1}}$, or
non-deterministically guesses that it has reached $e_{\alpha^1}$. In the later case, the transducer reads
$e_{\alpha^1}$, outputs $e_{\beta}$, with $\beta=\alpha^1\gamma$, and stores $\infty$. Otherwise, there are two
possibilities. If, when reading $e_{\alpha^i}$, the last symbol of $\alpha^{i-1}$ is less or equal than $\gamma$, then
the transducer outputs $e_{\beta}$, with $\beta=\alpha^i\gamma$, and stores $\infty$. Otherwise, it keeps the stored
symbol $e_\gamma$ and proceeds to read the symbol $e_{\alpha^{i-1}}$.

This heuristic description of the transition function corresponds to a more formal description using a finite lookup
table. The transducer performs right-insertion of a symbol $e_\gamma$ in terms of representatives in the language
$K$. Thus $K_{e_\gamma}$ is a rational relation.

\subsubsection{Left multiplication by transducer}

We want to prove that for any $\gamma \in \aA_n$, the relation
\begin{equation*}
	{}_{e_\gamma}K=\{(e_{\alpha^1}\cdots e_{\alpha^k},e_{\beta^1}\cdots
	e_{\beta^l})\in K\times K: e_{\gamma}e_{\alpha^1}\cdots e_{\alpha^k} \bellcongn e_{\beta^1}\cdots e_{\beta^l}\}
\end{equation*}
is a rational relation.

The strategy will be analogous to the one used in the right multiplication by transducer, but this time the transducer
will read pairs of words from left to right. Similarly, we will make use of the left insertion algorithm (see subsection
\ref{subsection1.3}) but using the symbols $\eE_n$. The transducer will read a pair of words
\begin{equation*}
(e_{\alpha^1}e_{\alpha^2}\cdots e_{\alpha^k},e_{\beta^1}e_{\beta^2}\cdots
e_{\beta^l})\in K\times K
\end{equation*}
from left to right with the purpose of checking whether the pair is in $_{e_\gamma}K$.

The transducer stores in its state a symbol $e_\eta$ from $\eE_n$ or a symbol $\infty$. Initially, the transducer stores
$e_\gamma$. The idea is that when the stored symbol is some $e_\eta$, the transducer will insert $\eta$ into the column
represented by the symbol it is reading from the input tape. When the stored symbol is $\infty$, the transducer has
completed the algorithm and simply reads the remaining symbols from the input tape and writes them in the output tape.

If the transducer is storing $e_\eta$ and reads $e_{\alpha^i}$, there are two possibilities:
\begin{itemize}
\item If the last symbol of $\eta$ is greater than the first symbol of $\alpha^i$, it outputs $e_{\eta\alpha^i}$ and
  stores $\infty$.
\item Otherwise, it factors $\alpha^i$ as $\alpha^i = \mu\nu$, where the length of $\mu$ is minimal such that last
  symbol of $\mu$ is greater than or equal to the last symbol of $\eta$. It outputs $e_{\eta\nu}$ and stores $e_\mu$.
\end{itemize}
If the transducer reaches the end of its input (which it can know by non-deterministically looking ahead) while storing
$e_\eta$, it outputs $e_\eta$.

Again, this heuristic description of the transition function corresponds to a more formal description using a finite
lookup table. The transducer performs left-insertion of a symbol $e_\gamma$ in terms of representatives in the language
$K$. Thus ${}_{e_\gamma}K$ is a rational relation.

\subsubsection{Deducing biautomaticity}

Therefore, for any $\gamma\in \aA_n$, both $_{e_\gamma}K$ and $K_{e_\gamma}$ are rational
relations. Define $Q\subseteq \eE_n^*\times \aA_n^*$ by
\begin{align*}
	Q=\{(e_{\alpha^1}e_{\alpha^2}\cdots
	e_{\alpha^k},\alpha^1\alpha^2\cdots
	\alpha^k):  e_{\alpha^i} \in \eE_n\}.
\end{align*}
It is easy to see that $Q$ is a rational relation. (In fact, $Q$ is the homomorphism extending $e_{\alpha} \mapsto \alpha$.)
Let
\begin{equation*}
  L=(K)Q=K\circ Q=\{v\in \aA_n^*: \exists u\in K\ (u,v)\in Q\}.
\end{equation*}
Since $K$ maps bijectively onto $\bell_n$, so does $L$. Moreover, $L$ is a regular language, because $K$ is a regular
language and a $Q$ is a rational relation and the class of regular languages is closed under applying rational
relations. For any $\gamma\in \aA_n$,
\begin{align*}
  &(u,v)\in L_\gamma \\
\iff{}& u\in L\, \wedge\, v\in L\,
\wedge\, u\gamma \bellcongn v\\
	\iff{}&\exists u',v'\in K\ (u',u)\in
Q\,\wedge\,(v',v)\in Q\,\wedge\, u'e_\gamma=_{\bell_n} v'\\
	\iff{}&\exists u',v'\in K\ (u,u')\in Q^{-1}\,\wedge\,
(u',v')\in K_{e_\gamma}\, \wedge\, (v',v)\in Q\\
	\iff{}& (u,v)\in Q^{-1}\circ K_{e_\gamma}\circ Q
\end{align*}
Therefore, as $Q^{-1}, K_{e_\gamma}$ and $Q$ are rational relations, $L_\gamma$
is a rational relation.

Similarly, from the fact that $_{e_\gamma}K$ is a rational relation, we deduce
that $_\gamma L = Q^{-1} \circ\ {_{e_\gamma}K} \circ\ Q$ is a rational relation.

Finally, since $L$ maps bijectively onto $L$,
\[
L_\varepsilon = {}_\varepsilon L = \{(u,u):u \in L\},
\]
which is a rational relation.

For any $\gamma \in \aA_n$, if $(u, v) \in L_\gamma$ or $(u,v) \in {}_\gamma L$, then $\abs{v} \leq \abs{u} + 1$ since
$u\gamma \bellcongn v$ and $\bell_n$ is multihomogeneous. By Proposition \ref{prop1.9},
$(L_\gamma) \delta_R$, $(L_\gamma) \delta_L$, $({}_\gamma L) \delta_R$ and
$({}_\gamma L) \delta_L$ are all regular. This proves the following result:

\begin{thm}
  For any $n\in \mathbb{N}$, $(\aA_n,L)$ is a biautomatic structure for $\bell_n$.
\end{thm}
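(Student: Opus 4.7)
The plan is to directly assemble the constructions from the preceding subsections into a verification of the biautomaticity conditions. Since $K\subseteq\eE_n^*$ is regular and maps bijectively onto $\bell_n$ by identification with \lps\ canonical words (via Proposition~\ref{prop2.3}), and $Q\subseteq\eE_n^*\times\aA_n^*$ is the rational relation extending $e_\alpha\mapsto\alpha$, I would first observe that $L=(K)Q$ is regular (the class of regular languages is closed under images under rational relations) and maps bijectively onto $\bell_n$ (since $Q$ preserves the underlying $\bell_n$-element and $K$ has exactly one representative per class).

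For each $\gamma\in\aA_n$, I would establish rationality of $L_\gamma$ and ${}_\gamma L$ by transferring the rational relations $K_{e_\gamma}$ and ${}_{e_\gamma}K$ back along $Q$. Concretely, a short chain of equivalences shows $L_\gamma=Q^{-1}\circ K_{e_\gamma}\circ Q$ and ${}_\gamma L=Q^{-1}\circ{}_{e_\gamma}K\circ Q$, which are rational by closure of the rational-relation class under composition and inverse. The cases $\gamma=\varepsilon$ reduce to the diagonal on $L$, which is plainly rational. The main conceptual obstacle is the rationality of $K_{e_\gamma}$ and ${}_{e_\gamma}K$, but this has already been handled by the column-symbol transducer simulations of Algorithms~\ref{alg:Bell} and~\ref{alg:left_insertion}, where encoding each \lps\ column as a single alphabet symbol $e_\alpha$ ensures that only a finite amount of state is needed to implement each insertion step (locate the affected column, emit the modified column symbol, and then copy).

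Finally, I would invoke Proposition~\ref{prop1.9} to pass from rationality of the multiplier relations to regularity of the padded languages $(L_\gamma)\delta_R$, $(L_\gamma)\delta_L$, $({}_\gamma L)\delta_R$, and $({}_\gamma L)\delta_L$. The length bound $\bigl||u|-|v|\bigr|\leq 1$ required by that proposition follows from multihomogeneity of $\bell_n$ (which itself follows from the form of $\rR_{\bell_n}$), because any relation $u\gamma\bellcongn v$ or $\gamma u\bellcongn v$ forces $|v|=|u|+1$, and the diagonal cases have $|v|=|u|$. Collecting these regularities for all $\gamma\in\aA_n\cup\{\varepsilon\}$ establishes that $(\aA_n,L)$ is a biautomatic structure for $\bell_n$.
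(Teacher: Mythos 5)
Your proposal is correct and follows essentially the same route as the paper: it assembles the regularity of $L=(K)Q$, the rationality of $L_\gamma=Q^{-1}\circ K_{e_\gamma}\circ Q$ and ${}_\gamma L=Q^{-1}\circ{}_{e_\gamma}K\circ Q$ (resting on the two transducer constructions over $\eE_n$), the diagonal case for $\varepsilon$, and the length bound feeding into Proposition~\ref{prop1.9}. The only cosmetic difference is that you justify the bound $\bigl||u|-|v|\bigr|\leq 1$ by noting $|v|=|u|+1$ exactly, where the paper states the inequality $|v|\leq|u|+1$; both suffice.
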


\subsection{Consequences of automaticity}

The automaticity of the \rps\ and \lps\ monoids together with \cite[Corollary~3.7]{MR1795250}
imply the following result:

\begin{prop}
  For any $n\in \mathbb{N}$, the monoids $\belr_n$ and $\bell_n$ have word problem solvable in quadratic time.
\end{prop}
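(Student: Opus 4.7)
My plan is to invoke the preceding biautomaticity and automaticity results and then cite the general fact that automatic monoids have quadratic-time word problem. Concretely, in the previous subsections we constructed an automatic structure $(\aA_n, L)$ for $\belr_n$ and a biautomatic structure $(\aA_n, L)$ for $\bell_n$ (both over the original generating alphabet $\aA_n$, after composing the transducer-based structure for $\bell_n$ with the rational relation $Q$ that translates between $\eE_n^*$ and $\aA_n^*$). In particular, both $\belr_n$ and $\bell_n$ are automatic monoids in the sense of \cite{MR1795250}.

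The key external ingredient is \cite[Corollary~3.7]{MR1795250}, which states that any automatic monoid (equivalently, semigroup with identity adjoined, under the conventions of that paper) has word problem solvable in quadratic time in the length of the input. Given two input words $u, v \in \aA_n^*$, the standard argument runs as follows: use the multiplier automata of the automatic structure to compute, symbol by symbol, a representative $\hat{u} \in L$ with $\hat{u} =_M u$ (each right-multiplication step requires running a finite automaton on a padded pair of words, which is linear in the length of the current representative; summing over $|u|$ steps gives a quadratic bound), and similarly a representative $\hat{v} \in L$ with $\hat{v} =_M v$. Then $u =_M v$ if and only if $\hat{u} = \hat{v}$ as words, which is checked in linear time, since $L_\varepsilon$ is a regular language of diagonal pairs in our construction.

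Thus the plan for the proof itself is essentially a one-line invocation: combine the automatic structures already built with \cite[Corollary~3.7]{MR1795250} to conclude that the word problems of $\belr_n$ and $\bell_n$ are decidable in quadratic time. The only thing to remark explicitly is that both structures use the alphabet $\aA_n$ as generating set, so the quadratic bound is stated directly in terms of the length of words over the natural generating set, with no exponential blowup from any alphabet change. There is no genuine obstacle here; the work has all been done in establishing the (bi)automatic structures in the earlier subsections.
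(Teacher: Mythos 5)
Your proposal is correct and matches the paper's argument exactly: the paper also deduces the result in one line by combining the automatic structures $(\aA_n,L)$ for $\belr_n$ and $\bell_n$ established in the preceding subsections with \cite[Corollary~3.7]{MR1795250}. The extra detail you give about iteratively computing representatives via the multiplier automata is just the standard proof of that cited corollary and is not spelled out in the paper.
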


\bibliographystyle{alphaabbrv}
\bibliography{\jobname}

\newcommand{\etalchar}[1]{$^{#1}$}
\begin{thebibliography}{CGM15b}

\bibitem[AD99]{MR1694204}
D.~Aldous \& P.~Diaconis.
\newblock `Longest increasing subsequences: from patience sorting to the
  {B}aik-{D}eift-{J}ohansson theorem'.
\newblock {\em Bull. Amer. Math. Soc. (N.S.)}, 36, no.~4 (1999), pp. 413--432.
\newblock {\sc doi:} \href {http://dx.doi.org/10.1090/S0273-0979-99-00796-X}
  {{10.1090/S0273-0979-99-00796-X}}.

\bibitem[Adj66]{MR0218434}
S.~Adjan.
\newblock {\em Defining relations and algorithmic problems for groups and
  semigroups}.
\newblock Proceedings of the Steklov Institute of Mathematics, No. 85 (1966).
  Translated from the Russian by M. Greendlinger. American Mathematical
  Society, Providence, R.I., 1966.

\bibitem[Ber79]{MR549481}
J.~Berstel.
\newblock {\em Transductions and context-free languages}, vol.~38 of {\em
  Leitf\"aden der Angewandten Mathematik und Mechanik [Guides to Applied
  Mathematics and Mechanics]}.
\newblock B.G. Teubner, Stuttgart, 1979.

\bibitem[BL07]{BL2005}
A.~Burstein \& I.~Lankham.
\newblock `Combinatorics of patience sorting piles'.
\newblock {\em S\'em. Lothar. Combin.}, 54A (2005/07), pp. Art. B54Ab, 19.

\bibitem[BO93]{MR1215932}
R.~V. Book \& F.~Otto.
\newblock {\em String-rewriting systems}.
\newblock Texts and Monographs in Computer Science. Springer-Verlag, New York,
  1993.
\newblock {\sc doi:} \href {http://dx.doi.org/10.1007/978-1-4613-9771-7}
  {{10.1007/978-1-4613-9771-7}}.

\bibitem[CEK{\etalchar{+}}01]{MR1847182}
J.~Cassaigne, M.~Espie, D.~Krob, J.-C. Novelli, \& F.~Hivert.
\newblock `The {C}hinese monoid'.
\newblock {\em Internat. J. Algebra Comput.}, 11, no.~3 (2001), pp. 301--334.
\newblock {\sc doi:} \href {http://dx.doi.org/10.1142/S0218196701000425}
  {{10.1142/S0218196701000425}}.

\bibitem[CGM15a]{CGMrewriting}
A.~Cain, R.~Gray, \& A.~Malheiro.
\newblock `Rewriting systems and biautomatic structures for chinese,
  hypoplactic, and sylvester monoids'.
\newblock {\em International Journal of Algebra and Computation}, 25,
  no.~01{--}02 (2015), pp. 51--80.
\newblock {\sc doi:} \href {http://dx.doi.org/10.1142/S0218196715400044}
  {{10.1142/S0218196715400044}}.

\bibitem[CGM15b]{MR3283708}
A.~J. Cain, R.~D. Gray, \& A.~Malheiro.
\newblock `Finite {G}r\"obner-{S}hirshov bases for plactic algebras and
  biautomatic structures for plactic monoids'.
\newblock {\em J. Algebra}, 423 (2015), pp. 37--53.
\newblock {\sc doi:} \href {http://dx.doi.org/10.1016/j.jalgebra.2014.09.037}
  {{10.1016/j.jalgebra.2014.09.037}}.

\bibitem[CKK{\etalchar{+}}]{ckkmo_placticidentity}
A.~J. Cain, G.~Klein, {\L}.~Kubat, A.~Malheiro, \& J.~Okni{\'{n}}ski.
\newblock `A note on identities in plactic monoids and monoids of
  upper-triangular tropical matrices'.
\newblock arXiv:~\href {http://arxiv.org/abs/1705.04596} {{1705.04596}}.

\bibitem[CLRS09]{MR2572804}
T.~H. Cormen, C.~E. Leiserson, R.~L. Rivest, \& C.~Stein.
\newblock {\em Introduction to algorithms}.
\newblock MIT Press, Cambridge, MA, third edition, 2009.

\bibitem[CM16]{1611.04151}
A.~J. Cain \& A.~Malheiro.
\newblock `Identities in plactic, hypoplactic, sylvester, baxter, and related
  monoids', 2016.
\newblock arXiv:~\href {http://arxiv.org/abs/1611.04151} {{1611.04151}}.

\bibitem[Coh97]{MR1452735}
D.~E. Cohen.
\newblock `String rewriting and homology of monoids'.
\newblock {\em Math. Structures Comput. Sci.}, 7, no.~3 (1997), pp. 207--240.
\newblock {\sc doi:} \href {http://dx.doi.org/10.1017/S0960129596002149}
  {{10.1017/S0960129596002149}}.

\bibitem[CQ08]{chen2008grobner}
Y.~Chen \& J.~Qiu.
\newblock `Gr{\"o}bner--{S}hirshov basis for the chinese monoid'.
\newblock {\em Journal of Algebra and its Applications}, 7, no.~05 (2008), pp.
  623--628.

\bibitem[CRRT01]{MR1795250}
C.~M. Campbell, E.~F. Robertson, N.~Ru{\v{s}}kuc, \& R.~M. Thomas.
\newblock `Automatic semigroups'.
\newblock {\em Theoret. Comput. Sci.}, 250, no.~1-2 (2001), pp. 365--391.
\newblock {\sc doi:} \href {http://dx.doi.org/10.1016/S0304-3975(99)00151-6}
  {{10.1016/S0304-3975(99)00151-6}}.

\bibitem[DHT02]{duchamp2002noncommutative}
G.~Duchamp, F.~Hivert, \& J.-Y. Thibon.
\newblock `Noncommutative symmetric functions {VI}: free quasi-symmetric
  functions and related algebras'.
\newblock {\em International Journal of Algebra and computation}, 12, no.~05
  (2002), pp. 671--717.

\bibitem[DJM90]{date1990representations}
E.~Date, M.~Jimbo, \& T.~Miwa.
\newblock `Representations of ${U}_q(\mathfrak{gl}(n, {C}))$ at $q=0$ and the
  {R}obinson--{S}chensted correspondence'.
\newblock {\em Physics and Mathematics of Strings. World Sci. Publ},  1990.

\bibitem[DK94]{MR1351284}
G.~Duchamp \& D.~Krob.
\newblock `Plactic-growth-like monoids'.
\newblock In {\em Words, languages and combinatorics, {II} ({K}yoto, 1992)},
  pp. 124--142. World Sci. Publ., River Edge, NJ, 1994.

\bibitem[dLV11]{deLuca:2011:FRS:2408037}
A.~de~Luca \& S.~Varricchio.
\newblock {\em Finiteness and Regularity in Semigroups and Formal Languages}.
\newblock Springer Publishing Company, Incorporated, 1st edition, 2011.

\bibitem[ECH{\etalchar{+}}92]{MR1161694}
D.~B. Epstein, J.~W. Cannon, D.~F. Holt, S.~V. Levy, M.~S. Paterson, \& W.~P.
  Thurston.
\newblock {\em Word processing in groups}.
\newblock Jones and Bartlett Publishers, Boston, MA, 1992.

\bibitem[FM71]{Flores:1971:ABS:362663.362752}
I.~Flores \& G.~Madpis.
\newblock `Average binary search length for dense ordered lists'.
\newblock {\em Commun. ACM}, 14, no.~9 (1971), pp. 602--603.
\newblock {\sc doi:} \href {http://dx.doi.org/10.1145/362663.362752}
  {{10.1145/362663.362752}}.

\bibitem[Fre75]{FREDMAN197529}
M.~L. Fredman.
\newblock `On computing the length of longest increasing subsequences'.
\newblock {\em Discrete Mathematics}, 11, no.~1 (1975), pp. 29--35.
\newblock {\sc doi:} \href
  {http://dx.doi.org/http://dx.doi.org/10.1016/0012-365X(75)90103-X}
  {{http://dx.doi.org/10.1016/0012-365X(75)90103-X}}.

\bibitem[FS93]{MR1203822}
C.~Frougny \& J.~Sakarovitch.
\newblock `Synchronized rational relations of finite and infinite words'.
\newblock {\em Theoret. Comput. Sci.}, 108, no.~1 (1993), pp. 45--82.
\newblock International Colloquium on Words, Languages and Combinatorics
  (Kyoto, 1990).
\newblock {\sc doi:} \href {http://dx.doi.org/10.1016/0304-3975(93)90230-Q}
  {{10.1016/0304-3975(93)90230-Q}}.

\bibitem[Ful97]{MR1464693}
W.~Fulton.
\newblock {\em Young {T}ableaux}, vol.~35 of {\em London Mathematical Society
  Student Texts}.
\newblock Cambridge University Press, Cambridge, 1997.
\newblock With applications to representation theory and geometry.

\bibitem[Gir11]{MR2820726}
S.~Giraudo.
\newblock `Algebraic and combinatorial structures on {B}axter permutations'.
\newblock In {\em 23rd {I}nternational {C}onference on {F}ormal {P}ower
  {S}eries and {A}lgebraic {C}ombinatorics ({FPSAC} 2011)}, Discrete Math.
  Theor. Comput. Sci. Proc., AO, pp. 387--398. Assoc. Discrete Math. Theor.
  Comput. Sci., Nancy, 2011.

\bibitem[Gir12]{MR2914637}
S.~Giraudo.
\newblock `Algebraic and combinatorial structures on pairs of twin binary
  trees'.
\newblock {\em J. Algebra}, 360 (2012), pp. 115--157.
\newblock {\sc doi:} \href {http://dx.doi.org/10.1016/j.jalgebra.2012.03.020}
  {{10.1016/j.jalgebra.2012.03.020}}.

\bibitem[HNT03]{MR2081336}
F.~Hivert, J.-C. Novelli, \& J.-Y. Thibon.
\newblock `An analogue of the plactic monoid for binary search trees'.
\newblock In {\em Proceedings of {WORDS}'03}, vol.~27 of {\em TUCS Gen. Publ.},
  pp. 27--35. Turku Cent. Comput. Sci., Turku, 2003.

\bibitem[HNT05]{MR2142078}
F.~Hivert, J.-C. Novelli, \& J.-Y. Thibon.
\newblock `The algebra of binary search trees'.
\newblock {\em Theoret. Comput. Sci.}, 339, no.~1 (2005), pp. 129--165.
\newblock {\sc doi:} \href {http://dx.doi.org/10.1016/j.tcs.2005.01.012}
  {{10.1016/j.tcs.2005.01.012}}.

\bibitem[HNT08]{Hivert20081682}
F.~Hivert, J.-C. Novelli, \& J.-Y. Thibon.
\newblock `Trees, functional equations, and combinatorial hopf algebras'.
\newblock {\em European Journal of Combinatorics}, 29, no.~7 (2008), pp.
  1682--1695.
\newblock {\sc doi:} \href {http://dx.doi.org/10.1016/j.ejc.2007.09.005}
  {{10.1016/j.ejc.2007.09.005}}.

\bibitem[How95]{howie1995fundamentals}
J.~Howie.
\newblock {\em Fundamentals of Semigroup Theory}.
\newblock LMS monographs. Clarendon Press, 1995.

\bibitem[HT05]{MR2194113}
M.~Hoffmann \& R.~M. Thomas.
\newblock `Biautomatic semigroups'.
\newblock In {\em Fundamentals of computation theory}, vol. 3623 of {\em
  Lecture Notes in Comput. Sci.}, pp. 56--67. Springer, Berlin, 2005.
\newblock {\sc doi:} \href {http://dx.doi.org/10.1007/11537311\_6}
  {{10.1007/11537311\_6}}.

\bibitem[Kar10]{karpuz2010complete}
E.~G. Karpuz.
\newblock `Complete rewriting system for the {C}hinese monoid'.
\newblock {\em Appl. Math. Sci}, 4 (2010), pp. 1081--1087.

\bibitem[Kas95]{kashiwara1995crystal}
M.~Kashiwara.
\newblock `On crystal bases'.
\newblock {\em Representations of groups (Banff, AB, 1994)}, 16 (1995), pp.
  155--197.

\bibitem[KO15]{kubat2015identities}
{\L}.~Kubat \& J.~Okni{\'n}ski.
\newblock `Identities of the plactic monoid'.
\newblock In {\em Semigroup Forum}, vol.~90, pp. 100--112. Springer, 2015.

\bibitem[Kob95]{MR1343236}
Y.~Kobayashi.
\newblock `A finitely presented monoid which has solvable word problem but has
  no regular complete presentation'.
\newblock {\em Theoret. Comput. Sci.}, 146, no.~1-2 (1995), pp. 321--329.
\newblock {\sc doi:} \href {http://dx.doi.org/10.1016/0304-3975(94)00264-J}
  {{10.1016/0304-3975(94)00264-J}}.

\bibitem[KT97]{Krob1997}
D.~Krob \& J.-Y. Thibon.
\newblock `Noncommutative symmetric functions {IV}: Quantum linear groups and
  {H}ecke algebras at $q = 0$'.
\newblock {\em Journal of Algebraic Combinatorics}, 6, no.~4 (1997), pp.
  339--376.
\newblock {\sc doi:} \href {http://dx.doi.org/10.1023/A:1008673127310}
  {{10.1023/A:1008673127310}}.

\bibitem[LLT95]{MR1358301}
A.~Lascoux, B.~Leclerc, \& J.-Y. Thibon.
\newblock `Crystal graphs and {$q$}-analogues of weight multiplicities for the
  root system {$A_n$}'.
\newblock {\em Lett. Math. Phys.}, 35, no.~4 (1995), pp. 359--374.
\newblock {\sc doi:} \href {http://dx.doi.org/10.1007/BF00750843}
  {{10.1007/BF00750843}}.

\bibitem[Lot02]{MR1905123}
M.~Lothaire.
\newblock {\em Algebraic combinatorics on words}, vol.~90 of {\em Encyclopedia
  of Mathematics and its Applications}.
\newblock Cambridge University Press, Cambridge, 2002.
\newblock {\sc doi:} \href {http://dx.doi.org/10.1017/CBO9781107326019}
  {{10.1017/CBO9781107326019}}.

\bibitem[LR98]{Loday1998293}
J.-L. Loday \& M.~O. Ronco.
\newblock `Hopf algebra of the planar binary trees'.
\newblock {\em Advances in Mathematics}, 139, no.~2 (1998), pp. 293 -- 309.
\newblock {\sc doi:} \href {http://dx.doi.org/10.1006/aima.1998.1759}
  {{10.1006/aima.1998.1759}}.

\bibitem[LS81]{MR646486}
A.~Lascoux \& M.-P. Sch{\"{u}}tzenberger.
\newblock `{L}e mono{\"{\i}}de plaxique'.
\newblock In {\em {N}oncommutative structures in algebra and geometric
  combinatorics}, no. 109 in {\em Quaderni de "La Ricerca Scientifica"}, pp.
  129--156. CNR, Rome, 1981.
\newblock {\sc url:}
  \href{http://igm.univ-mlv.fr/~berstel/Mps/Travaux/A/1981-1PlaxiqueNaples.pdf}{\nolinkurl{http://igm.univ-mlv.fr/~berstel/Mps/Travaux/A/1981-1PlaxiqueNaples.pdf}}.

\bibitem[Nov00]{Novelli2000315}
J.-C. Novelli.
\newblock `On the hypoplactic monoid'.
\newblock {\em Discrete Mathematics}, 217, no.~1 - 3 (2000), pp. 315 -- 336.
\newblock {\sc doi:} \href {http://dx.doi.org/10.1016/S0012-365X(99)00270-8}
  {{10.1016/S0012-365X(99)00270-8}}.

\bibitem[OK97]{Otto1997}
F.~Otto \& Y.~Kobayashi.
\newblock {\em Properties of Monoids That Are Presented by Finite Convergent
  String-Rewriting Systems --- A Survey}, pp. 225--266.
\newblock Springer US, Boston, MA, 1997.
\newblock {\sc doi:} \href {http://dx.doi.org/10.1007/978-1-4613-3394-4\_12}
  {{10.1007/978-1-4613-3394-4\_12}}.

\bibitem[PR95]{poirier1995algebres}
S.~Poirier \& C.~Reutenauer.
\newblock `Algebres de {H}opf de tableaux'.
\newblock {\em Ann. Sci. Math. Qu{\'e}bec}, 19, no.~1 (1995), pp. 79--90.

\bibitem[Rey]{Rey2016}
M.~Rey.
\newblock `A self-dual hopf algebra on set partitions'.
\newblock {\sc url:}
  \href{http://www-igm.univ-mlv.fr/~rey/articles/hopf_set.pdf}{\nolinkurl{www-igm.univ-mlv.fr/~rey/articles/hopf_set.pdf}}.

\bibitem[Rey07]{Maxime07}
M.~Rey.
\newblock `\textit{Algebraic constructions on set partitions}'.
\newblock {\em Formal Power Series and Algebraic CombinatoricsNankai
  University, Tianjin, China},  2007.
\newblock {\sc url:}
  \href{http://www-igm.univ-mlv.fr/~rey/articles/rey-fpsac07.pdf}{\nolinkurl{www-igm.univ-mlv.fr/~rey/articles/rey-fpsac07.pdf}}.

\bibitem[Ru{\v{s}}95]{ruskucphdthesis}
N.~Ru{\v{s}}kuc.
\newblock {\em Semigroup presentations}.
\newblock PhD thesis, University of St. Andrews, 4 1995.

\bibitem[Sap14]{sapir2014combinatorial}
M.~V. Sapir.
\newblock {\em Combinatorial algebra: syntax and semantics}.
\newblock Springer, 2014.

\bibitem[Sch61]{MR0121305}
C.~Schensted.
\newblock `Longest increasing and decreasing subsequences'.
\newblock {\em Canad. J. Math.}, 13 (1961), pp. 179--191.

\bibitem[Sip96]{Sipser:1996:ITC:524279}
M.~Sipser.
\newblock {\em Introduction to the Theory of Computation}.
\newblock International Thomson Publishing, 1st edition, 1996.

\bibitem[SOK94]{MR1288942}
C.~C. Squier, F.~Otto, \& Y.~Kobayashi.
\newblock `A finiteness condition for rewriting systems'.
\newblock {\em Theoret. Comput. Sci.}, 131, no.~2 (1994), pp. 271--294.
\newblock {\sc doi:} \href {http://dx.doi.org/10.1016/0304-3975(94)90175-9}
  {{10.1016/0304-3975(94)90175-9}}.

\bibitem[Sta11]{Stanley2011}
R.~P. Stanley.
\newblock {\em Enumerative Combinatorics: Volume 1}.
\newblock Cambridge University Press, New York, NY, USA, 2nd edition, 2011.

\bibitem[TY11]{THOMAS2011610}
H.~Thomas \& A.~Yong.
\newblock `Longest increasing subsequences, plancherel-type measure and the
  hecke insertion algorithm'.
\newblock {\em Advances in Applied Mathematics}, 46, no.~1 (2011), pp.
  610--642.
\newblock {\sc doi:} \href {http://dx.doi.org/10.1016/j.aam.2009.07.005}
  {{10.1016/j.aam.2009.07.005}}.

\end{thebibliography}

\end{document}